\theoremstyle{definition}
\newtheorem{defin}{Definition}[section]
\theoremstyle{remark}
\newtheorem{example}[defin]{Example}
\newtheorem{remar}[defin]{Remark}
\theoremstyle{plain}
\newtheorem{thm}[defin]{Theorem}
\newtheorem{prop}[defin]{Proposition}
\newtheorem{lemm}[defin]{Lemma}
\newtheorem{corol}[defin]{Corollary}
\def\rank{{\rm rank}}
\def\wt{\widetilde}
\def\ol{\overline}
\def\ov{\overline}
\def\tl{\widetilde}
\def\wh{\widehat}
\numberwithin{equation}{section}
\begin{document}

\title{Subdiagrams and invariant measures on Bratteli diagrams}

\date{}

\author{M. Adamska}
\address{The University of Warmia and Mazury, Olsztyn, Poland}
\email{malwina.adamska@gmail.com}

\author{S. Bezuglyi}
\address{Department of Mathematics, Institute for Low Temperature Physics, Kharkiv, Ukraine\\
{\em Current address: Department of Mathematics, University of Iowa, Iowa City,
52242 IA, USA}}
\email{bezuglyi@gmail.com}

\author{O. Karpel}
\address{Department of Mathematics, Institute for Low Temperature Physics, Kharkiv, Ukraine}
\email{helen.karpel@gmail.com}

\author{J. Kwiatkowski}
\address{Kotarbinski  University of Information Technology and Management, Olsztyn, Poland}
\email{jkwiat@mat.umk.pl}

\dedicatory{Dedicated to the memory of Ola Bratteli.}

\begin{abstract}

We study ergodic finite and infinite measures defined on the path space $X_B$ of a Bratteli diagram $B$ which are invariant with respect to the tail equivalence relation on $X_B$. Our interest is focused on measures supported by vertex and edge subdiagrams of $B$. We give several criteria when a finite invariant measure defined on the path space of a subdiagram of $B$ extends to a finite invariant measure on $B$. Given a finite ergodic measure on a Bratteli diagram $B$ and a subdiagram $B'$ of $B$, we find the necessary and sufficient conditions under which the measure of the path space $X_{B'}$ of $B'$ is positive. For a class of Bratteli diagrams of finite rank, we determine when they have maximal possible number of ergodic invariant measures. The case of diagrams of rank two is completely studied. We include also an example which explicitly illustrates the proved results.

\end{abstract}

\maketitle

\section{Introduction and background}\label{introduction}

\subsection{Motivation and main results} A Bratteli diagram $B$ is an $\mathbb N$-graded graph whose vertices are partitioned into levels and edges connect vertices of consecutive levels. Bratteli diagrams is one of the most studied objects in the theories of operator algebras and dynamical systems.
They were originally defined by O. Bratteli in his breakthrough article on the classification of AF $C^*$-algebras \cite{Br72}. During the last two decades, Bratteli diagrams turned out to be a very powerful and productive tool for the study of dynamical systems in the measurable, Borel, and Cantor settings. Due to the pioneering works by Vershik \cite{vershik:1981}, Herman-Putnam-Skau \cite{herman_putnam_skau:1992}, and Giordano-Putnam-Skau  \cite{giordano_putnam_skau:1995}, one can state, with some abuse of rigor, that every transformation in all mentioned dynamics admits a realization as a continuous map (usually called a Vershik map) acting on the path space of a Bratteli diagram. These remarkable results have served for a long time as the basis for the further study of diverse aspects of Bratteli diagrams and dynamical systems defined on their path spaces. The list of related papers is very long, so that we would refer only to a recent survey by Durand \cite{durand:2010} where the reader will find a detailed exposition of this subject and further references.

It is difficult to overestimate the significance of Bratteli diagrams  for the study  of dynamical systems. The main reason of their importance is the fact that  various properties of dynamical systems  become more transparent and can be investigated  in an easier way when one deals with corresponding Bratteli diagrams and Vershik maps. Our interest is focused on a Cantor dynamical system $(X, T)$ which is determined by a Cantor set $X$ and an aperiodic self-homeomorphism $T$. The problem of finding all ergodic invariant measures and their supports for a given $(X, T)$ is traditionally a central one in the theory of dynamical systems, especially for specific interesting examples of  homeomorphisms $T$.
But being considered in general settings, this problem looks rather vague, and there are very few universal results that can be applied to the study of a given homeomorphism $T$.

The situation changes completely when one first realizes $T$ as a Vershik map acting on the path space $X_B$ of a Bratteli diagram $B$. Then the structure of the corresponding Bratteli diagram helps very much to clarify dynamical properties of $T$. In particular, we can say a lot about minimal components,  ergodic invariant measures and their supports. Moreover, if a Bratteli diagram does not admit a Vershik map (this situation was considered  in  \cite{medynets:2006}, \cite{BKY14}, \cite{BY}), we can still  study invariant measures and minimal components with respect to the tail equivalence relation defined on the path space of the diagram.

The present paper continues the series of our articles \cite{BKMS_2010}, \cite{S.B.O.K.}, \cite{BKMS_2013}, and \cite{BKK_14} which are devoted to the study of ergodic measures on Bratteli diagrams from different points of view. In \cite{BKMS_2010}, \cite{S.B.O.K.}, and \cite{BKMS_2013} we dealt with important but peculiar  cases of stationary and finite rank Bratteli diagrams. In this paper, we focus on the study of measures defined on general Bratteli diagrams and which are invariant with respect to the tail equivalence relation.

Suppose that we have a Bratteli diagram $B$ and an ergodic measure $\mu$. It is still an open question whether one can  explicitly describe the support of $\mu$ on $X_B$ in terms of the diagram $B$.
It is worth noting  that  this question is completely solved for stationary and finite rank Bratteli  diagrams, simple or non-simple ones \cite{BKMS_2010}, \cite{BKMS_2013}. In this case such measures with necessity are supported by a subdiagram of $B$ (more precisely, the measure is extended by invariance from a subdiagram). By a Bratteli subdiagram, we mean a Bratteli diagram $B'$ that can be obtained from $B$ by removing some vertices and edges from each level of $B$. Then  $X_{B'}\subset X_B$. We will consider two extreme cases of Bratteli subdiagrams: vertex subdiagram (when we fix a subset of vertices at each level and take all edges between them) and edge subdiagram (some edges are removed from the initial Bratteli diagram but the vertices are not changed). It is clear that an arbitrary subdiagram can be obtained as a combination of these cases.

The central problems, we are interested in this paper, are the following:

(A) Given a subdiagram $B'$ of $B$ and an ergodic measure $\mu$ on $X_B$, under what conditions on $B'$ the subset $X_{B'}$ has positive measure $\mu$ in $X_B$?

(B) Let $\nu$ be a measure supported by the path space $X_{B'}$ of a  subdiagram $B' \subset B$. Then $\nu$ is extended to the subset $\mathcal E(X_{B'})$ by invariance with respect to the tail equivalence relation $\mathcal E$. Under what conditions $\nu(\mathcal E(X_{B'}))$ is finite (or infinite)?

(C) Let $B$ be a  Bratteli diagram of finite rank $k$. It is known that $B$ can support at most $k$ ergodic (finite and infinite) measures. Is it possible to determine under what conditions on the incidence matrices of $B$ there exist exactly $k$ ergodic measures?

Our main results give affirmative answers (in some cases, partial answers) to the questions above.

The paper is organized as follows.  We collected in Introduction all necessary information about Bratteli diagrams and measures on their path spaces. This section also contains basic notation. In Section~\ref{fineteness}, we consider an extension of a probability invariant measure defined on a subdiagram of a Bratteli diagram. We prove  several criteria of the finiteness of the measure extension for both vertex and edge subdiagrams. These results form an extended affirmative answer to question (B) and  moreover refine the statements proved earlier in  \cite{BKMS_2013} and \cite{BKK_14}. In Section~\ref{number_of_erg}, we find out when a finite rank Bratteli diagram has the maximal possible number of finite ergodic invariant measures using the results from Section~\ref{fineteness}. This answers question (C) above. We observe that this type of results is principally new in the context of Bratteli diagrams and is based on the careful study of incidence matrices. Section~\ref{subdiagrams} contains our main results about measure of the path space of a subdiagram. They answer question (A). Given a finite invariant measure $\mu$ on a Bratteli diagram $B$, we determine when  the measure of a path space of a subdiagram $\ov B$ is positive or zero. One of our  results is a criterion for the path space $X_{\ov B}$ to have measure zero. We show that in case when $\mu(X_{\ov B}) = 0$, the extension of any measure defined on $X_{\ov B}$ to the whole space $X_B$ is infinite. In Section~\ref{Example section}, we illustrate the proved results by considering an interesting class of  Bratteli diagrams for which  our results admit an explicit clarification.

\subsection{Invariant measures for Cantor dynamical systems}

Let $(X,T)$ be an aperiodic Cantor dynamical system, that is $X$ is a zero-dimensional compact metric space with no isolated points (a Cantor set) and $T$ is a homeomorphism of $X$ with infinite orbits. A Borel measure $\mu$ is called $T$-\textit{invariant} if $\mu(TA) = \mu(A)$ for any Borel set $A$. Let $M(X,T)$ be the set of all invariant measures. It is known that  $M(X,T)$ is a Choquet simplex whose extreme points are $T$-ergodic measures. This simplex includes probability measures (when $\mu(X) =1$)  and infinite measures (when $\mu(X) = \infty$). We observe that infinite measures cannot arise, for instance, in minimal dynamics.

It follows from \cite{herman_putnam_skau:1992}, \cite{giordano_putnam_skau:1995}, and \cite{medynets:2006} that any minimal (and even aperiodic) Cantor dynamical system $(X,T)$ admits a realization as a Bratteli-Vershik dynamical system $(X_B, \varphi_B)$ acting on a path space $X_B$ of a Bratteli diagram (the definitions of notions related to Bratteli diagrams are given in the next subsection). Thus, the study of measures invariant with respect to a homeomorphism $T$ is reduced to the case of measures defined on the path space of a Bratteli diagram. The advantage of this approach is based on the facts that (i) any such a measure is completely determined by its values on cylinder sets of $X_B$, and (ii) there are simple and explicit formulas for measures of cylinder sets. Especially transparent this method works for stationary and finite rank Bratteli diagrams, simple and non-simple ones \cite{BKMS_2010}, \cite{BKMS_2013}. We remark that any Bratteli diagram of finite rank $k$ has at most $k$ finite and infinite ergodic invariant measures.

We need to point out that the study of measures on a Bratteli diagram is a more general problem than that in the case of Cantor dynamics. This observation follows from the existence of Bratteli diagrams that do not support any continuous dynamics on their path spaces which is compatible with the tail equivalence relation. The first example of such a Bratteli diagram was given  in \cite{medynets:2006}; a more comprehensive coverage of this subject can be found in \cite{BKY14} and \cite{BY}. If a Bratteli diagram does not admit a Bratteli-Vershik homeomorphism, then we consider the {\em tail equivalence relation} $\mathcal E$ on $X_B$ and study measures invariant with respect to $\mathcal E$. In this paper we work with $\mathcal E$-invariant measures.

\subsection{Bratteli diagrams and their subdiagrams}\label{diagram_subdiagram}
The concept of a Bratteli diagram has been studied in a great number of recent research and survey papers devoted to various aspects of Cantor dynamics. We focus here only on some  necessary definitions referring to the pioneering articles \cite{herman_putnam_skau:1992}, \cite{giordano_putnam_skau:1995} (see also  \cite{durand:2010}, \cite{BKMS_2010} and the references there) where the reader can find more facts about Bratteli diagrams and widely used techniques, for instance, the  telescoping procedure. The most of definitions and notation utilized  in this paper are taken from  \cite{BKMS_2013}.

A {\it Bratteli diagram} is an infinite graph $B=(V,E)$ such that the vertex
set $V =\bigcup_{i\geq 0}V_i$ and the edge set $E=\bigcup_{i\geq 1}E_i$
are partitioned into disjoint subsets $V_i$ and $E_i$ where

(i) $V_0=\{v_0\}$ is a single point;

(ii) $V_i$ and $E_i$ are finite sets, $\forall i \geq 1$;

(iii) there exist $r : V \to E$ (range map $r$) and $s : V \to E$ (source map $s$), both from $E$ to
$V$, such that $r(E_i)= V_i$, $s(E_i)= V_{i-1}$, and
$s^{-1}(v)\neq\emptyset$, $r^{-1}(v')\neq\emptyset$ for all $v\in V$
and $v'\in V\setminus V_0$.

Given a Bratteli diagram $B$, the $n$-th {\em incidence matrix}
$F_{n}=(f^{(n)}_{v,w}),\ n\geq 0,$ is a $|V_{n+1}|\times |V_n|$
matrix such that $f^{(n)}_{v,w} = |\{e\in E_{n+1} : r(e) = v, s(e) = w\}|$
for  $v\in V_{n+1}$ and $w\in V_{n}$. Here the symbol $|\cdot |$ denotes the cardinality of a set.

The set of all infinite paths in $B = (V,E)$ is denoted by $X_B$.  The topology defined by finite paths (cylinder sets) turns $X_B$ into a 0-dimensional metric compact space. By assumption, we will consider only such Bratteli diagrams for which $X_B$ is a {\em Cantor set}.

 Let $x =(x_n)$ and $y =(y_n)$  be two paths from $X_B$. It is said that $x$ and $y$ are {\em tail equivalent} (in symbols,  $(x,y) \in \mathcal E$)  if there exists some $n$ such that $x_i = y_i$ for all $i\geq n$. Since $X_B$ has no isolated points, the $\mathcal E$-orbit of any point $x\in X_B$ is countable, i.e., $\mathcal E$ is a Borel countable equivalence relation.

For a Bratteli diagram $B = (V,E)$, we consider subsets  $\ol V\subset V$ and $\ol E \subset E$. Then, by definition, the pair $\ol B = (\ol V,\ol E)$ defines a {\em subdiagram} of $B$ if  $\ol V = s(\ol E)$ and $s(\ol E)  = r(\ol E) \cup \{v_0\}$. The path space $X_{\ol B}$ of the subdiagram $\ol B$ is obviously a closed subset of $X_B$.

On the other hand, there are closed subsets of $X_B$ which are not obtained as the path space of a Bratteli subdiagram. It was proved in \cite{giordano_putnam_skau:2004} that a closed subset $Z \subset X_B$ is the path space of a subdiagram if and only if $\mathcal E|_{Z \times Z}$ is an etal\'{e} equivalence relation (see the definition in \cite{giordano_putnam_skau:2004}).

Let  $\overline{B}$ be a subdiagram of a Bratteli diagram $B$. Then we  consider the sequence of incidence matrices $\{\overline{F}_n\}_{n=0}^{\infty}$ of $\overline{B}$. We will study  two extreme cases of subdiagrams, {\em edge subdiagrams} and {\em vertex subdiagrams}. By definition, an {\em edge} subdiagram is obtained from the diagram $B$ by ``removing'' some edges and leaving all vertices  of $B$ unchanged. That is, the incidence matrices $\{\overline{F}_n\}_{n=0}^{\infty}$ of an edge subdiagram are $|V_{n+1}| \times |V_n|$ matrices such that $\overline{F}_n \leq F_n$ for every $n \in \mathbb{N}$. We denote $\widetilde{F}_n = F_n - \overline{F}_n$. It is also required  that the following property holds as a part of the definition of an edge subdiagram: if $f_{vw}^{(n)} > 0$, then $\overline{f}_{vw}^{(n)} > 0$. Let $\overline{\mathcal{E}}$ denote the tail equivalence relation considered on the set $X_{\overline{B}}$. If there exists $N$ such that $\overline{F}_n = F_n$ for all $n > N$, then the diagram $\overline{B}$ is Kakutani equivalent to $B$; this case was studied earlier (see, for instance, \cite{giordano_putnam_skau:1995}).
So that we will assume, without loss of generality, that  $\overline{F}_n < F_n$ for infinitely many $n$.

To define a {\em vertex} subdiagram, we start with  a sequence $\overline W = \{W_n\}_{n>0}$ of proper subsets $W_n$ of $V_n$, and let  $W'_n = V_n \setminus W_n \neq \emptyset$ for all $n$. The vertex subdiagram $\overline B =  (\ol W, \ol E)$ is formed by the vertices from $W_n$ and by the set of edges $\ol E$ whose source and range are in $W_{n}$ and $W_{n+1}$, respectively. Thus, the incidence  matrix $\ol F_n$ of $\ol B$ has the size $|W_{n+1}| \times |W_n|$, and  it is represented by a block of  $F_n$ corresponding to the vertices from $W_{n}$ and $W_{n+1}$. We say in this case that $\ol W = (W_n)$ is the support of $\ol B$.

It is not hard to see that the study of any subdiagram $\ol B$ of $B$ is reduced to the cases of edge and vertex subdiagrams. Indeed, $\ol B$ can be viewed as an edge subdiagram of a vertex subdiagram of $B$.

Throughout the paper we keep the following notation: given $n \geq 1$ and $v \in V_n$, let $X_v^{(n)}\subset X_B$ be the set of all paths that go through the vertex $v$ (the clopen set $X_v^{(n)}$ is also called a tower because any two cylinder sets (finite paths) $e_1(v_0,v)$ and $e_2(v_0,v)$ from $X_v^{(n)}$ are $\mathcal E$-equivalent); let $h_v^{(n)}$ be the cardinality of  the set $E(v_0, v)$ of all cylinder sets between $v_0$ and $v$, i.e. $h_v^{(n)}$ is the height of the tower   $X_v^{(n)}$. Similarly,  $\overline{X}_v^{(n)}$ stands for the tower in a subdiagram $\overline{B}$ that is determined by  a vertex $v$ of $\ol B$. Thus,  we consider the paths in $\overline{X}_v^{(n)}$ that contain edges from $\overline{B}$ only. Let $\overline{h}_v^{(n)}$ be the  height of the tower $\ol X_v^{(n)}$. As a rule, objects related to a subdiagram $\ol B$ are denoted by barred symbols.

We note that the vectors of tower heights $h^{(n)} = (h^{(n)}_v : v \in V_n)$ are related in the following obvious way:
\begin{equation}\label{formula for heights}
F_n h^{(n)} = h^{(n+1)}, \ \ n \geq 1.
\end{equation}

\subsection{Measures on Bratteli diagrams}\label{measures on diagrams}

Let $\mu$ be a probability $\mathcal E$-invariant Borel measure  on $X_B$ (for brevity, we will use the term ``measure on $B$'' below). It is not hard to see that  $\mu$ is completely determined by its values $(p_v^{(n)}, v \in V_n, n \geq 1)$ on the cylinder sets $[e(v_0,v)]$ corresponding to a finite path between $v_0$ and $v$ (since $\mu$ is $\mathcal E$-invariant, the value $p_v^{(n)}$ does not actually depend on a choice of $e(v_0,v)$).  The details can be found, for instance, in \cite{BKMS_2010}. In other words, $p_v^{(n)} = \dfrac{\mu(X_v^{(n)})}{h_v^{(n)}}$. Then one has the relation
\begin{equation}\label{formula for measures}
 F_n^T p^{(n+1)} = p^{(n)}, \ n\geq 1,
\end{equation}
where $p^{(n)}$ is the column vector with entries $(p^{(n)}_v : v \in V_n)$, and $F_n^T$ denotes the transpose of $F_n$.

Let $\ol B$ be any (vertex or edge) subdiagram  of a Bratteli diagram $B$. Then we can consider an ``inner'' probability measure $\ol\mu$ defined on the path space $X_{\ol B}$ and invariant with respect to $\mathcal {\ol E}$. It is obvious that $\mathcal E|_{X_{\ol B}\times X_{\ol B}} = \mathcal {\ol E}$. For every vertex $v$ from the $n$-th level of $\ol B$, we have $\overline{p}_v^{(n)} = \dfrac{\overline{\mu}(\overline{X}_v^{(n)})}{\overline{h}_v^{(n)}}$,  $n \geq 1$.

Let $\widehat X_{\ol B} := \mathcal E(X_{\ol B})$ be the subset of paths in $X_B$ that are tail equivalent to paths from $X_{\ol B}$. In other words, the $\mathcal E$-invariant subset $\widehat X_{\ol B} $ of $X_B$ is the saturation of $X_{\ol B}$ with respect to the equivalence relation $\mathcal E$ (or $X_{\ol B}$ is a countable complete section of  $\mathcal E$ on $\widehat X_{\ol B}$). Let $\ov \mu$ be an ergodic probability measure on $X_{\ol B}$ invariant with respect to the tail equivalence relation $\mathcal{\ol E}$ defined on $\ol B$. Then $\ov \mu$ can be canonically extended to the ergodic measure $\widehat{\ov \mu}$ on the space $\widehat X_{\ol B}$ by invariance with respect to $\mathcal E$ \cite{BKMS_2013}. In case of need, we can think that $\widehat{\ov \mu}$ is extended to the whole space $X_{B}$ by setting $\widehat {\ov \mu} (X_B \setminus \widehat{X}_{\ol B}) = 0$.

Specifically, take a finite path $\ol e \in \ol E(v_0, v)$ from the top vertex $v_0$ to a vertex $v$ that belongs to the subdiagram $\ol B$.  Let $[\ol e]$ denote the cylinder subset of $X_{\ol B}$ determined by $\ol e$. For any finite path $\ol f \in E(v_0, v)$ from the diagram $B$ with the same range $v$, we set $\widehat{\ov \mu} ([\ol f])  = \ov \mu([\ol e])$. In such a way, the measure $\widehat{\ov \mu}$ is extended to the  $\sigma$-algebra of Borel subsets of $\wh X_{\ol B}$ generated by all clopen sets of the form  $[\ol z]$ where a finite path $\ol z$ has the range in a vertex from $\ol B$ and the tail of $z$ remains in $\ol B$. By construction, $\wh {\ov \mu}$  is $\mathcal E$-invariant and its restriction on $X_{\ol B}$ coincides with $\ov \mu$. We note that  the value $\wh{\ov \mu}(\wh X_{\ol B})$ can be either finite or infinite depending on the structure of $\ol B$ and $B$ (see (\ref{extension_method}) and Theorems \ref{neccsuff from BKMS}, \ref{edge_fin_krit} below). Furthermore, the set $\wh X_{\ol B}$  is said to be the {\em support} of $\widehat{\ov \mu}$.

Denote by $\wh X_{\ol B}^{(n)}$ the set of all paths $x = (x_i)_{i = 1}^{\infty}$  from $X_B$ such that $(x_1, \ldots, x_n)$ is a finite path in $B$ which starts at the vertex $v_0$ and ends at a vertex $v$ of $\ov B$, and the tail  $(x_{n+1},x_{n+2},\ldots)$ belongs to $\overline{B}$.
For instance, for a vertex subdiagram $\ov B$ we have
\begin{equation}\label{n-th level}
\wh X_{\ol B}^{(n)} = \{x = (x_i)\in \wh X_{\ol B} : r(x_i) \in W_i, \ \forall i \geq n\}.
\end{equation}
It is obvious that $\wh X_{\ol B}^{(n)} \subset \wh X_{\ol B}^{(n+1)}$ and
\begin{equation}\label{extension_method}
\widehat{\ov \mu}(\wh X_{\ol B}) = \lim_{n\to\infty} \widehat{\ov \mu}(\wh X_{\ol B}^{(n)}).
\end{equation}
More precisely, if $\ov B$ is a vertex subdiagram, then $\widehat{\ov \mu}(\wh X_{\ol B}^{(n)}) = \sum_{w\in W_n}  h^{(n)}_w \ov p^{(n)}_w$ and therefore
\begin{equation}\label{extension_method_vertex}
\widehat{\ov \mu}(\wh X_{\ol B}) = \lim_{n\to\infty}\sum_{w\in W_n}  h^{(n)}_w \ov p^{(n)}_w.
\end{equation}

In the case of an edge subdiagram $\ol B$,  the vertex set of $\ol B$ at level $n$ is $V_n$ and we obtain  a slightly different formula
\begin{equation}\label{extension_method_edge}
\widehat{\ov \mu}(\wh X_{\ol B}) = \lim_{n\to\infty}\sum_{w\in V_n}  h^{(n)}_w \ov p^{(n)}_w.
\end{equation}

\subsection{Some classes of Bratteli diagrams}\label{classes}

A Bratteli diagram $B = (V,E)$ is called of {\em finite rank} if there exists some natural number $d$ such that $|V_n| \leq d$ for every $n$. In a particular case, when all incidence matrices of $B$ are the same, the diagram $B$ is called {\em stationary}. If $B$ is a finite rank diagram, then it can be  telescoped to a diagram $B'$ where $B'$ has the same vertex set at each level.
Finite rank (simple or non-simple) Bratteli diagrams represent examples of Cantor systems (minimal or non-minimal) that have finitely many ergodic measures. These measures may be finite or infinite. Moreover, for each such a measure $\mu$, one can explicitly describe its support, a vertex subdiagram $B_\mu$ (see details and more results in  \cite{BKMS_2013}).

As in a recent work of Handelman \cite{H2013}, we also consider Bratteli diagrams whose incidence matrices satisfy the  {\em equal row sum} ($ERS$) and {\em equal column sum} ($ECS$)  properties.  It is said that a non-negative integer matrix $F = (f_{i,j})$ satisfies the equal row sum property ($F\in ERS$ or $F\in ERS(r)$) if $\sum_{j} f_{i,j} = r$ for all $i$ for some $r \in \mathbb{N}$.
A matrix $F = (f_{i,j})$ has the {\em equal column sum} property ($F \in ECS$) if the sum of entries of each column is the same. We write $ECS(c)$ if this value is  $c \in \mathbb N$.

For a Bratteli diagram $B$ defined by a sequence of incidence matrices $\{F_n\}_{n=0}^\infty$,
we write $B \in ERS(r_n)$ if $\sum_{w \in V_n} f^{(n)}_{v,w} = r_n$ for every $v \in V_{n+1}$
 and every $n$, and additionally $F_0 = h^{(1)} = (r_0, \ldots, r_0)^T$. It follows from (\ref{formula for heights}) that in this case $h^{(n)}_w = r_0 \cdots r_{n-1}$ for every $w \in V_n$.
Thus, for every probability $\mathcal{E}$-invariant measure on $B$, we have
$$
\sum_{w \in V_n} p_w^{(n)} = \frac{1}{r_0 \cdots r_{n-1}}
$$
for $n = 1, 2, \ldots$ The proof follows easily from (\ref{formula for measures}) by induction.
Similarly, we write $B \in ECS (c_n)$ if for every $w \in V_{n}$ and every $n \geq 1$ we have $\sum_{v \in V_{n+1}} f^{(n)}_{vw} = c_n$. Suppose that  $F_0 = (r_0, \ldots, r_0)^T$, then $c_0 = r_0 |V_1|$. According to (\ref{formula for measures}), we see that there exists a probability $\mathcal E$-invariant measure $\mu$ on $B$ defined by relations $p_v^{(n)} = (c_0\cdots c_{n-1})^{-1}$ where $v\in V_n$, $n\geq 1$.

If $F_n \in ERS(r_n) \cap ECS(c_n)$ for all $n$, then the fact that $\mu$ is a probability measure implies the relation $r_0\cdots r_{n-1}|V_n| = c_0\cdots c_{n-1}$.

\section{Finiteness of measure extension}\label{fineteness}

In this section, we focus on the following problem: given a Bratteli diagram $B$ and a subdiagram $\ol B$ of $B$, find necessary and sufficient conditions for finiteness of  measure extension $\wh { \ov \mu}(\wh X_{\ol B})$ when a probability measure $\ov \mu$ is defined on $\ol B$ of $B$. We prove several criteria here. We deal with both edge and vertex subdiagrams.

If $(F_n)$ is a  sequence of incidence matrices  of a Bratteli diagram $B$, then we can also define the sequence of stochastic matrices $(Q_n)$ with entries
$$
q^{(n)}_{v,w} = f^{(n)}_{v,w}\frac{h_w^{(n)}}{h_v^{(n+1)}},\ \ v\in V_{n+1},\ w\in V_{n}.
$$

For convenience of the reader, we collect here some results proved in \cite{BKK_14}. The following theorem gives criteria for finiteness of the measure extension.

\begin{thm}\label{thm from BKK}
Let $B$ be a Bratteli diagram with the sequence of incidence matrices $\{F_n\}_{n=0}^\infty$, and let $\ov B$ be a vertex subdiagram of $B$ defined by the sequence of subsets  $(W_n)_{n = 0}^\infty, \ W_n \subset V_n$. Suppose that $\ov \mu$ is  a probability  $\ol{\mathcal E}$-invariant measure on $X_{\ov B}$. Then the following properties are equivalent:
\begin{eqnarray*}
\widehat{\overline{\mu}}(\wh X_{\ov B}) < \infty & \Longleftrightarrow & \sum_{n=1}^\infty \sum_{v \in W_{n+1}} \sum_{w \in W'_n} f_{v,w}^{(n)} h_w^{(n)} \ov p_v^{(n+1)} < \infty\\
& \Longleftrightarrow & \sum_{n=1}^{\infty} \sum_{w \in W_{n+1}} \wh{\ov\mu}(X_{w}^{(n+1)})\sum_{v \in W^{'}_{n}} q_{w,v}^{(n)}
< \infty\\
& \Longleftrightarrow & \sum_{i=1}^{\infty} \left(\sum_{w\in W_{i+1}} h_w^{(i+1)} \ov p_w^{(i+1)} - \sum_{w\in W_{i}} h_w^{(i)} \ov p_w^{(i)}\right)< \infty.
\end{eqnarray*}
\end{thm}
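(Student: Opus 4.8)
The plan is to show that the three series on the right are in fact a single series written in three different ways, and that this common series is finite precisely when $\widehat{\overline{\mu}}(\wh X_{\ov B}) < \infty$. To this end I would set
\begin{equation*}
a_n := \widehat{\overline{\mu}}(\wh X_{\ov B}^{(n)}) = \sum_{w \in W_n} h_w^{(n)} \ov p_w^{(n)},
\end{equation*}
using formula (\ref{extension_method_vertex}). Since $\wh X_{\ov B}^{(n)} \subset \wh X_{\ov B}^{(n+1)}$ by (\ref{n-th level}), the sequence $(a_n)$ is non-decreasing, and by (\ref{extension_method}) its limit equals $\widehat{\overline{\mu}}(\wh X_{\ov B})$. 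Writing $a_n = a_1 + \sum_{i=1}^{n-1}(a_{i+1}-a_i)$ with non-negative increments, the limit is finite if and only if $\sum_{i=1}^{\infty}(a_{i+1}-a_i) < \infty$, which is exactly the third displayed condition once we recognize its summand as $a_{i+1}-a_i$. So the equivalence of finiteness with the third condition is essentially free.

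The computational core is to evaluate $a_{i+1}-a_i$. I would expand $a_{i+1}$ by the full-diagram height relation (\ref{formula for heights}), $h_v^{(i+1)} = \sum_{w \in V_i} f_{v,w}^{(i)} h_w^{(i)}$, and rewrite $a_i$ by the subdiagram version of the measure relation (\ref{formula for measures}), namely $\ov p_w^{(i)} = \sum_{v \in W_{i+1}} f_{v,w}^{(i)} \ov p_v^{(i+1)}$ for $w \in W_i$; this is legitimate because for a vertex subdiagram the incidence entries between $W_{i+1}$ and $W_i$ coincide with those of $F_i$. After substituting, interchanging the order of summation, and subtracting, the contribution of $w \in W_i$ cancels and leaves
\begin{equation*}
a_{i+1}-a_i = \sum_{v \in W_{i+1}} \sum_{w \in W'_i} f_{v,w}^{(i)} h_w^{(i)} \ov p_v^{(i+1)},
\end{equation*}
which is precisely the summand of the first displayed condition. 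Hence the first and third series agree term by term.

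Finally, for the second condition I would substitute the definition $q_{w,v}^{(i)} = f_{w,v}^{(i)} h_v^{(i)} / h_w^{(i+1)}$ together with the identity $\widehat{\overline{\mu}}(X_w^{(i+1)}) = h_w^{(i+1)} \ov p_w^{(i+1)}$, which records that each of the $h_w^{(i+1)}$ cylinders from $v_0$ to $w \in W_{i+1}$ receives mass $\ov p_w^{(i+1)}$ under the extension. The factor $h_w^{(i+1)}$ then cancels the denominator of $q_{w,v}^{(i)}$, and the $i$-th summand of the second condition collapses to $\sum_{w \in W_{i+1}} \sum_{v \in W'_i} f_{w,v}^{(i)} h_v^{(i)} \ov p_w^{(i+1)}$, i.e.\ after renaming indices the $i$-th summand of the first condition. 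Thus all three series coincide term by term, and the theorem follows.

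The step that I expect to require the most care is the identity $\widehat{\overline{\mu}}(X_w^{(i+1)}) = h_w^{(i+1)} \ov p_w^{(i+1)}$ used in the last paragraph. One must read the tower $X_w^{(i+1)}$ against the level-$(i+1)$ approximation $\wh X_{\ov B}^{(i+1)}$ of the support, so that only paths which remain in $\ov B$ beyond level $i+1$ are counted, and one must consistently use the full-diagram heights $h_w^{(i+1)}$ rather than the subdiagram heights $\ov h_w^{(i+1)}$, since it is precisely the full heights that enter the extension formula (\ref{extension_method_vertex}). Everything else is a bookkeeping exercise in substituting the height and measure recursions.
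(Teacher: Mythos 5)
Your proof is correct, and its telescoping identity $a_{i+1}-a_i=\sum_{v\in W_{i+1}}\sum_{w\in W'_i} f^{(i)}_{v,w}h^{(i)}_w\ov p^{(i+1)}_v$ (obtained by expanding $h_v^{(i+1)}$ over $V_i$ and $\ov p_w^{(i)}$ over $W_{i+1}$) is exactly the mechanism the paper itself uses for the edge-subdiagram analogue in Theorem \ref{edge_fin_krit}; for the vertex version stated here the paper only cites \cite{BKK_14}, so this is the intended argument. Your reading of $\wh{\ov\mu}(X_w^{(n+1)})$ as $h_w^{(n+1)}\ov p_w^{(n+1)}$, i.e.\ as the measure of the tower intersected with the level-$(n+1)$ approximation $\wh X_{\ov B}^{(n+1)}$, is the correct interpretation and agrees with the notation $\wh X_w^{(n)}$ appearing in the proof of Theorem \ref{edge_fin_krit}.
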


The next  theorem contains some necessary and sufficient conditions for finiteness of the measure extension.

\begin{thm}\label{neccsuff from BKMS}
Let $B$ be a Bratteli diagram with incidence stochastic matrices $\{Q_n = (q_{v,w}^{(n)})\}$; and let $\ol B$ be a proper vertex subdiagram of $B$ defined by a sequence of subsets $(W_n)$ where $W_n \subset V_n$.

(1) Suppose that, for a probability invariant measure $\ov \mu$ on the path space $X_{\ol B}$, the extension $\widehat{\ov \mu}$ of $\ov \mu$ on  $\wh X_{\ol B}$ is finite. Then

(a)
\begin{equation}\label{neces2}
\sum_{n=1}^{\infty} \sum_{v \in W_{n+1}} \sum_{w \in W^{'}_{n}} q_{v,w}^{(n)} \mu(\ol X_{v}^{(n+1)})< \infty,
\end{equation}

(b)
$$
\sum_{n=1}^\infty \min_{v \in W_{n+1}}\max_{w \in W'_n} q_{v,w}^{(n)} <
\infty.
$$

(2) If
\begin{equation}\label{suffic}
\sum_{n=1}^{\infty} \sum_{v \in W_{n+1}} \sum_{w \in W^{'}_{n}} q_{v,w}^{(n)} < \infty,
\end{equation}
then any probability measure $\ov \mu$ defined on the path space $X_{\ol B}$ of the subdiagram $\ol B$ extends to a finite measure $\widehat{\ov \mu}$ on $\wh X_{\ol B}$.

(3) If
\begin{equation}\label{series I}
I = \sum_{n=1}^\infty \max_{v \in W_{n+1}} \left(\sum_{w \in W'_n} q_{vw}^{(n)}\right) < \infty,
\end{equation}
then, for any probability measure $\ov \mu$ defined on the path space $X_{\ol B}$ of the subdiagram $\ol B$, we have $\wh{\ov \mu}(\wh X_{\ol B}) < \infty$.

\end{thm}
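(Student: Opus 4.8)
The plan is to reduce all three parts to the characterizations of finiteness recorded in Theorem~\ref{thm from BKK}, together with a single per-level identity. Write $W'_n = V_n \setminus W_n$ (nonempty since $\ov B$ is proper) and set
\[
S_n := \wh{\ov\mu}(\wh X_{\ov B}^{(n)}) = \sum_{w \in W_n} h_w^{(n)} \ov p_w^{(n)} ,
\]
so that, by (\ref{extension_method_vertex}), $(S_n)$ is nondecreasing with $\lim_n S_n = \wh{\ov\mu}(\wh X_{\ov B})$; moreover $X_{\ov B} \subset \wh X_{\ov B}^{(n)}$ for every $n$ gives $S_n \ge \ov\mu(X_{\ov B}) = 1$. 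The identity on which everything rests is
\[
S_{n+1} - S_n = \sum_{v \in W_{n+1}} h_v^{(n+1)} \ov p_v^{(n+1)} \sum_{w \in W'_n} q_{v,w}^{(n)} ,
\]
obtained by inserting the subdiagram relation $\ov p_w^{(n)} = \sum_{v \in W_{n+1}} f_{v,w}^{(n)} \ov p_v^{(n+1)}$ (from (\ref{formula for measures}) applied to $\ov B$) into $S_n$, splitting $h_v^{(n+1)} = \sum_{w \in V_n} f_{v,w}^{(n)} h_w^{(n)}$ from (\ref{formula for heights}) according to $V_n = W_n \sqcup W'_n$, and using $f_{v,w}^{(n)} h_w^{(n)} = q_{v,w}^{(n)} h_v^{(n+1)}$. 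Since $\wh{\ov\mu}(X_v^{(n+1)}) = h_v^{(n+1)} \ov p_v^{(n+1)}$, the right-hand side is exactly the $n$-th summand of the second series in Theorem~\ref{thm from BKK}; hence $\wh{\ov\mu}(\wh X_{\ov B}) < \infty$ if and only if $\sum_n (S_{n+1} - S_n) < \infty$.

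For (1a), assuming $\wh{\ov\mu}(\wh X_{\ov B}) < \infty$, I would compare term by term: as $\ov\mu(\ov X_v^{(n+1)}) = \ov h_v^{(n+1)} \ov p_v^{(n+1)} \le h_v^{(n+1)} \ov p_v^{(n+1)}$, the $n$-th term of (\ref{neces2}) is at most $S_{n+1} - S_n$, so (\ref{neces2}) converges by the equivalence just stated. For (1b), set $m_n = \min_{v \in W_{n+1}} \max_{w \in W'_n} q_{v,w}^{(n)}$. Every $v \in W_{n+1}$ satisfies $\sum_{w \in W'_n} q_{v,w}^{(n)} \ge \max_{w \in W'_n} q_{v,w}^{(n)} \ge m_n$, so the identity and $S_{n+1} \ge 1$ yield
\[
S_{n+1} - S_n \ge m_n \sum_{v \in W_{n+1}} h_v^{(n+1)} \ov p_v^{(n+1)} = m_n S_{n+1} \ge m_n .
\]
Summing and using $\sum_n (S_{n+1} - S_n) = \wh{\ov\mu}(\wh X_{\ov B}) - S_1 < \infty$ gives $\sum_n m_n < \infty$.

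The core of the theorem is part (3), which I would prove by an absorption estimate on the increasing sequence $(S_n)$. Put $a_n = \max_{v \in W_{n+1}} \sum_{w \in W'_n} q_{v,w}^{(n)}$, so that $\sum_n a_n = I < \infty$ by (\ref{series I}). The identity gives $S_{n+1} - S_n \le a_n S_{n+1}$, and since $(S_n)$ is nondecreasing, $S_{n+1} \le S_N$ whenever $n + 1 \le N$. Fix $n_0$ with $\sum_{n \ge n_0} a_n \le \tfrac12$ (possible as $\sum_n a_n < \infty$); summing $S_{n+1} - S_n \le a_n S_N$ over $n_0 \le n \le N-1$ telescopes to
\[
S_N - S_{n_0} \le S_N \sum_{n = n_0}^{N-1} a_n \le \tfrac12 S_N ,
\]
whence $S_N \le 2 S_{n_0}$ for all $N \ge n_0$. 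As $S_{n_0}$ is a finite sum, $(S_N)$ is bounded, so $\wh{\ov\mu}(\wh X_{\ov B}) = \lim_N S_N < \infty$. Part (2) then follows at once: since $a_n \le \sum_{v \in W_{n+1}} \sum_{w \in W'_n} q_{v,w}^{(n)}$, hypothesis (\ref{suffic}) forces $\sum_n a_n < \infty$, i.e. (\ref{series I}) holds, and part (3) applies. The step I expect to require the most care is precisely this absorption estimate: the factor $\wh{\ov\mu}(X_v^{(n+1)}) = h_v^{(n+1)} \ov p_v^{(n+1)}$ appearing in each summand has no a priori bound, so one cannot dominate the series termwise by $\sum_n a_n$; bounding $S_{n+1}$ by the later partial value $S_N$ and then absorbing a fraction of $S_N$ into the left-hand side is what makes the argument close.
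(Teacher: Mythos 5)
Your proposal is correct, and for the only part the paper actually proves in the text --- part (3) --- it takes a recognizably different route. Both arguments reduce finiteness of $\wh{\ov\mu}(\wh X_{\ov B})$ to the characterizations of Theorem~\ref{thm from BKK}, and both hinge on the level-$n$ quantity $\varepsilon_n = a_n = \max_{v\in W_{n+1}}\sum_{w\in W'_n} q_{v,w}^{(n)}$, but the absorbed quantity differs: the paper controls the pointwise height ratios $M_n=\max_{w\in W_n} h_w^{(n)}/\ov h_w^{(n)}$ via the multiplicative recursion $M_{n+1}\le M_n/(1-\varepsilon_n)$ and then dominates each term of the series by $M\varepsilon_n$, whereas you work directly with the aggregate $S_n=\sum_{w\in W_n}h_w^{(n)}\ov p_w^{(n)}$ and close the argument by the additive absorption $S_N-S_{n_0}\le \tfrac12 S_N$. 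Your version is slightly cleaner in that it sidesteps the (minor) issue in the paper's recursion of $1-\varepsilon_n$ possibly being nonpositive for small $n$ (the paper handles this only implicitly by restricting to ``sufficiently large $n$''), and your exact identity $S_{n+1}-S_n=\sum_{v\in W_{n+1}}h_v^{(n+1)}\ov p_v^{(n+1)}\sum_{w\in W'_n}q_{v,w}^{(n)}$ is precisely the telescoping form already present in Theorem~\ref{thm from BKK}, so nothing new needs to be re-derived; the paper's approach, on the other hand, yields the extra uniform bound $h_w^{(n)}/\ov h_w^{(n)}\le M$, which is of independent interest (compare Theorem~\ref{main}). You also supply proofs of (1a), (1b) and (2) --- all correct: (1a) by the comparison $\ov h_v^{(n+1)}\le h_v^{(n+1)}$, (1b) from $S_{n+1}\ge 1$, and (2) as an immediate corollary of (3) since the maximum over $v$ is dominated by the sum over $v$ --- whereas the paper simply cites these from \cite{BKK_14}.
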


In order to illustrate the methods used in the proof of Theorems \ref{thm from BKK} and \ref{neccsuff from BKMS}, we prove here condition (\ref{series I}).

\begin{proof} By Theorem \ref{thm from BKK} , it suffices to show that $I < \infty$ implies $S < \infty$ where
$$
S =\sum_{n=1}^\infty \sum_{v \in W_{n+1}} \sum_{w \in W'_n} f_{v,w}^{(n)} h_w^{(n)} \ov p_v^{(n+1)}.
$$
 We have
\begin{eqnarray*}
 \sum_{v \in W_{n+1}} \sum_{w \in W'_n} f_{v,w}^{(n)} h_w^{(n)} \ov p_v^{(n+1)} &=& \sum_{v \in W_{n+1}} \sum_{w \in W'_n} f_{v,w}^{(n)} h_w^{(n)} \frac{\ov p_v^{(n+1)} \ov h_v^{(n+1)}}{\ov h_v^{(n+1)}} \\
 &=& \sum_{v \in W_{n+1}} \ov \mu(\ov X_v^{(n+1)}) \sum_{w \in W'_n} f_{v,w}^{(n)} \frac{h_w^{(n)}}{\ov h_v^{(n+1)}}  \\
   & \leq & \max_{v \in W_{n+1}} \left(  \sum_{w \in W'_n} f_{v,w}^{(n)} \frac{h_w^{(n)}}{\ov h_v^{(n+1)}} \right) \sum_{v \in W_{n+1}} \ov \mu(\ov X_v^{(n+1)}).
\end{eqnarray*}

Since $\ov \mu$ is a probability measure, we obtain $\sum_{v \in W_{n+1}} \ov \mu(\ov X_v^{(n+1)}) = 1$.

We show that there exists $M > 0$ such that $\dfrac{h_w^{(n)}}{\ov h_w^{(n)}} < M$ for all $w \in W_n$ and all sufficiently large $n$.
Indeed, set $M_n = \max_{w \in W_n} \dfrac{h_w^{(n)}}{\ov h_w^{(n)}}$. Fix any $v \in W_{n+1}$. Then

\begin{eqnarray*}
\frac{h_v^{(n+1)}}{\ov h_v^{(n+1)}} &= &\frac{1}{\ov h_v^{(n+1)}}\left(\sum_{w\in W_n} f_{v,w}^{(n)} h_w^{(n)} + \sum_{w\in W'_n} f_{v,w}^{(n)} h_w^{(n)}\right)\\
&\leq &\frac{M_n}{\ov h_v^{(n+1)}}  \sum_{w\in W_n} f_{v,w}^{(n)} \ov h_w^{(n)} + \frac{1}{\ov h_v^{(n+1)}}\sum_{w\in W'_n} f_{v,w}^{(n)} h_w^{(n)}\\
&= & M_n + \frac{h_v^{(n+1)}}{\ov h_v^{(n+1)}}\sum_{w\in W'_n} f_{v,w}^{(n)} \frac{h_w^{(n)}}{h_v^{(n+1)}}\\
&= & M_n + \frac{h_v^{(n+1)}}{\ov h_v^{(n+1)}}\sum_{w\in W'_n} q_{v,w}^{(n)}\\
&\leq & M_n + \frac{h_v^{(n+1)}}{\ov h_v^{(n+1)}}\varepsilon_n,
\end{eqnarray*}
where
$$
\varepsilon_n = \max_{v \in W_{n+1}} \left(\sum_{w \in W'_n} q_{v,w}^{(n)}\right).
$$
Since $I < \infty$,  $\varepsilon_n \to 0$ as $n \to \infty$. From the above inequalities we obtain
$$
\frac{h_v^{(n+1)}}{\ov h_v^{(n+1)}} (1 - \varepsilon_n) \leq M_n \  \mbox{ and } \ M_{n+1} \leq \frac{M_n}{1 - \varepsilon_n}.
$$
Finally,
$$
M_n \leq \frac{M_1}{\prod_{k=1}^{\infty}(1 - \varepsilon_n)} =: M.
$$
Since $I < \infty$, we get that $M$ is finite.

Thus,
\begin{eqnarray*}
  \sum_{v \in W_{n+1}} \sum_{w \in W'_n} f_{v,w}^{(n)} h_w^{(n)} \ov p_v^{(n+1)}
&< & M \max_{v \in W_{n+1}} \left(  \sum_{w \in W'_n} f_{v,w}^{(n)}
 \frac{h_w^{(n)}}{h_v^{(n+1)}} \right)\\
 &= & M \max_{v \in W_{n+1}} \left(  \sum_{w \in W'_n} q_{v,w}^{(n)} \right).
\end{eqnarray*}
This completes the proof.
\end{proof}

It was also shown in \cite{BKK_14} that, in general, the sufficient condition (\ref{suffic}) is not necessary and the necessary condition  (\ref{neces2}) is not sufficient.

\begin{example}\label{ex1} The following example illustrates our approach to the study of measures on subdiagrams of a Bratteli diagram. In this example we answer the main problems we are interested in.

Let $B$ be a Bratteli diagram defined by rectangular incidence matrices
$$
F_n =
\begin{pmatrix}
1 & \ldots & 1 \\
\vdots & \ddots & \vdots\\
1 & \ldots & 1
\end{pmatrix}
$$
of size $|V_{n+1}|\times |V_n|$.
We remark that this type of Bratteli diagrams appears, for instance, for the symbolic minimal system known as the Grillenberger flow (see~\cite{G72}). Let $r_n = |V_n|$ and $c_n = |V_{n+1}|$, then $F_n \in ECS(c_n) \cap ERS(r_n)$, i.e.,  the sum of entries in each row is $r_n$ and the sum of entries in each column is $c_n$.

There is a unique probability invariant measure $\mu$ on $X_B$. Indeed, by~(\ref{formula for measures}), we have $p_w^{(n)} = p_{w'}^{(n)}$ for every $w, w' \in V_n$, and
$$
p_w^{(n)} = \frac{1}{|V_0| \cdots |V_{n}|}
$$
for every $w \in V_n$.

Let now $\ov B$ be a vertex subdiagram of $B$ defined by a sequence of vertices $(W_n)$ where $W_n \subsetneq V_n$. Compute $\mu(X_{\ov B})$ as follows:
\begin{eqnarray*}
\mu(X_{\ov B})  &=& \lim_{n \rightarrow \infty} \sum_{w \in W_n} p_w^{(n)} \ov h_{w}^{(n)} \\
  &=& \lim_{n \rightarrow \infty} \sum_{w \in W_n} \frac{|W_1| \ldots |W_{n-1}|}{|V_1| \cdots |V_n|}  \\
   &=& \lim_{n \rightarrow \infty} \prod_{i=1}^n \frac{|W_i|}{|V_i|}.
\end{eqnarray*}
Hence,
$$
\left(\mu(X_{\ov B}) > 0\right) \ \Longleftrightarrow \ \left(\prod_{i=1}^\infty \frac{|W_i|}{|V_i|} > 0\right) \ \Longleftrightarrow \ \left(\sum_{i=1}^\infty \left(1 - \frac{|W_i|}{|V_i|}\right) < \infty\right).
$$

Let $\ov \mu$ be the unique invariant measure on $X_{\ov B}$. Then $\ov p_w^{(n)} = (|W_1| \cdots |W_n|)^{-1}$.We obtain
\begin{eqnarray*}
 \widehat{\overline{\mu}}(\wh X_{\ov B})  &=&\lim_{n \rightarrow \infty} \sum_{w \in W_n} h_w^{(n)} \ov p_w^{(n)}\\
 &=& \lim_{n \rightarrow \infty} |W_n| \frac{|V_1| \cdots|V_{n-1}|}{|W_1| \cdots |W_n|}\\
  &=& \prod_{i=1}^\infty \frac{|V_i|}{|W_i|}.
\end{eqnarray*}
Finally,
\begin{equation}\label{equivalence in example}
\left(\widehat{\overline{\mu}}(\wh X_{\ov B}) < \infty\right) \ \Longleftrightarrow\ \left(\prod_{i=1}^\infty \frac{|V_i|}{|W_i|} < \infty\right) \ \Longleftrightarrow\ \left(\mu(X_{\ov B}) > 0\right).
\end{equation}

It follows from (\ref{equivalence in example}) that, if $\mu(X_{\ov B}) = 0$ then  $\widehat{\overline{\mu}}(\wh X_{\ov B}) = \infty$. We will see below that this fact is a particular case of a general result proved in Theorem \ref{main} and Corollary~\ref{corol_infty}.

We remark that if the measure of $X_{\ov B}$ is positive, then $\dfrac{|V_i|}{|W_i|}
\rightarrow 1$ as $i \rightarrow \infty$. It follows that if  $\ol B$ is, in particular,
a subdiagram of $B$  of finite rank, then $\mu(X_{\ov B}) = 0$.

Continuing this example, we can apply  the necessary and sufficient conditions of the finiteness
of measure extension found above. By Theorem \ref{neces2}, if $\widehat{\overline{\mu}}(X_B) < \infty$, then
$$
S_1 = \sum_{n=1}^{\infty}\min_{w \in W_{n+1}}\max_{v \in W_n'}q_{w,v}^{(n)} < \infty.
$$
For the considered example, we have
$$
S_1 = \sum_{n=1}^{\infty}\min_{w \in W_{n+1}}\max_{v \in W_n'}f_{w,v}^{(n)}\frac{h_v^{(n)}}{h_w^{(n+1)}} = \sum_{n=1}^{\infty} \frac{1}{|V_n|}.
$$
In particular,  we conclude that if, for the diagram $B$, the sequence $(|V_n|)$ is not growing sufficiently fast, then the probability invariant measure $\mu$ cannot be obtained as an extension from some vertex subdiagram.

We finish this example by the following observation.
The set $X_B \setminus \wh X_{\ov B}$ consists of all paths that visit vertices $W'_n = V_n \setminus W_n$ for infinitely many $n$'s.
Since $W_n \varsubsetneq V_n$, the sequence $(W_n')$ defines a proper subdiagram of $B$. If $\mu(X_{\ov B}) > 0$, then $\prod_{n=1}^\infty \dfrac{|V_n|}{|W_n|} < \infty$. Since $\dfrac{|V_n|}{|W_n|} > 1$ for all $n$, we have $\prod_{k=1}^\infty \dfrac{|V_{n_k}|}{|W_{n_k}|} < \infty$ for any increasing subsequence $\{n_k\}$. This inequality implies that $\prod_{k=1}^\infty \dfrac{|W'_{n_k}|}{|V_{n_k}|} = 0$ for any increasing subsequence $\{n_k\}$.

\end{example}
\medskip

Now we consider the case of an edge subdiagram.

\begin{thm}\label{edge_fin_krit}
Let $\overline{B}$ be an edge subdiagram of a Bratteli diagram $B$. For a probability invariant measure $\overline{\mu}$ on $X_{\overline{B}}$, the extension $\widehat{\overline{\mu}} (\widehat{X}_{\overline{B}})$ is finite if and only if
$$
\sum_{n=1}^{\infty} \sum_{v \in V_{n+1}} \sum_{w \in V_{n}} \widetilde{f}_{v,w}^{(n)} h_w^{(n)}\overline{p}_v^{(n+1)} < \infty
$$
where $\widetilde{f}_{v,w} = {f}_{v,w} - \ol{f}_{v,w}$.
Moreover, the following equality holds:
\begin{eqnarray*}
\widehat{\overline{\mu}}(\widehat{X}_{\overline{B}})  &=& \widehat{\overline{\mu}}(\widehat{X}_{\ol B}^{(1)}) + \sum_{n=1}^\infty \sum_{v\in V_{n+1}} \sum_{w\in V_{n}} \widetilde{f}_{v,w}^{(n)} h_w^{(n)} \overline{p}_v^{(n+1)}.
\end{eqnarray*}
\end{thm}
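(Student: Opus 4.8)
The plan is to reduce the statement to a telescoping computation, exactly parallel to the third equivalence of Theorem~\ref{thm from BKK}, but now carried out over the full vertex set $V_n$ rather than over a subset $W_n$. Set
$$
S_n = \widehat{\overline{\mu}}(\widehat{X}_{\overline{B}}^{(n)}) = \sum_{w \in V_n} h_w^{(n)} \overline{p}_w^{(n)},
$$
which is the correct expression for an edge subdiagram by (\ref{extension_method_edge}). Since $\widehat{X}_{\overline{B}}^{(n)} \subset \widehat{X}_{\overline{B}}^{(n+1)}$, the sequence $(S_n)$ is nondecreasing and, by (\ref{extension_method}), $\widehat{\overline{\mu}}(\widehat{X}_{\overline{B}}) = \lim_{n\to\infty} S_n$. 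Hence it suffices to analyze the increments $S_{n+1} - S_n$: the limit is finite if and only if $\sum_{n\geq 1}(S_{n+1}-S_n)$ converges, and in that case $\lim_n S_n = S_1 + \sum_{n\geq 1}(S_{n+1}-S_n)$.

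The heart of the argument is to show that each increment equals the $n$-th summand of the series in the statement. First I would rewrite $S_n$ using the measure recursion for the subdiagram. Since $\overline{\mu}$ is $\overline{\mathcal E}$-invariant on $\overline{B}$, relation (\ref{formula for measures}) applied to $\overline{B}$ gives $\overline{p}_w^{(n)} = \sum_{v \in V_{n+1}} \overline{f}_{v,w}^{(n)} \overline{p}_v^{(n+1)}$, so that
$$
S_n = \sum_{w \in V_n} h_w^{(n)} \sum_{v\in V_{n+1}} \overline{f}_{v,w}^{(n)}\overline{p}_v^{(n+1)} = \sum_{v\in V_{n+1}} \overline{p}_v^{(n+1)} \sum_{w \in V_n} \overline{f}_{v,w}^{(n)} h_w^{(n)}.
$$
On the other hand, the height recursion (\ref{formula for heights}) for the \emph{full} diagram $B$ gives $h_v^{(n+1)} = \sum_{w\in V_n} f_{v,w}^{(n)} h_w^{(n)}$, so
$$
S_{n+1} = \sum_{v\in V_{n+1}} h_v^{(n+1)} \overline{p}_v^{(n+1)} = \sum_{v\in V_{n+1}} \overline{p}_v^{(n+1)} \sum_{w\in V_n} f_{v,w}^{(n)} h_w^{(n)}.
$$
Subtracting and using $\widetilde{f}_{v,w}^{(n)} = f_{v,w}^{(n)} - \overline{f}_{v,w}^{(n)}$ yields
$$
S_{n+1} - S_n = \sum_{v\in V_{n+1}} \sum_{w\in V_n} \widetilde{f}_{v,w}^{(n)} h_w^{(n)} \overline{p}_v^{(n+1)},
$$
which is precisely the $n$-th term in the series of the theorem.

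Summing this telescoping identity from $1$ to $n-1$ and passing to the limit gives both the claimed equality $\widehat{\overline{\mu}}(\widehat{X}_{\overline{B}}) = \widehat{\overline{\mu}}(\widehat{X}_{\overline{B}}^{(1)}) + \sum_{n\geq 1}(S_{n+1}-S_n)$ and, since every term is nonnegative and $S_1 = \widehat{\overline{\mu}}(\widehat{X}_{\overline{B}}^{(1)})$ is a finite sum, the stated finiteness criterion.

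The main subtlety to get right, and the only place where the edge case genuinely differs from a routine computation, is the asymmetry in which incidence matrix governs which recursion: the tower heights $h_w^{(n)}$ are heights in $B$ and therefore obey the recursion driven by $F_n$, whereas the values $\overline{p}_w^{(n)}$ come from the measure on the subdiagram and obey the recursion driven by $\overline{F}_n$. It is exactly this mismatch that produces the defect matrix $\widetilde{F}_n = F_n - \overline{F}_n$ in the increment, and keeping the two recursions attached to their correct matrices is the one point where care is required.
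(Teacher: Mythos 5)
Your proposal is correct and follows essentially the same telescoping argument as the paper: the paper works with $R_n = \sum_{w\in V_n}(h_w^{(n)}-\ov h_w^{(n)})\ov p_w^{(n)} = S_n - 1$ and derives the increment from the decomposition $\tl h_v^{(n+1)} = \sum_w \tl f_{v,w}^{(n)} h_w^{(n)} + \sum_w \ov f_{v,w}^{(n)} \tl h_w^{(n)}$, whereas you obtain the identical increment $S_{n+1}-S_n=\sum_{v,w}\tl f_{v,w}^{(n)}h_w^{(n)}\ov p_v^{(n+1)}$ directly by applying the $\ov F_n$-measure recursion to $S_n$ and the $F_n$-height recursion to $S_{n+1}$. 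The two computations are equivalent, and your closing remark about which matrix drives which recursion is exactly the right point of care.
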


\begin{proof}
We recall that, for a given probability measure $\overline{\mu}$ on $\overline{B}$, the relations $\wh X_{\ol B}^{(n)} = \bigcup_{w \in V_n} \widehat{X}_w^{(n)}$ and $\widehat{\overline{\mu}}
 (\wh X _{\ov B}) = \lim_{n \rightarrow \infty}
\widehat{\overline{\mu}} (\wh X_{\ov B}^{(n)})$ hold for any $n\in \mathbb N$ (see (\ref{n-th level})).

Denote
$$
R_{n} = \sum_{w \in V_{n}} \tl h_w^{(n)} \ov p_w^{(n)},
$$
where $\tl h_w^{(n)} = h_w^{(n)} - \ov h_w^{(n)}$. Then $\widehat{\overline{\mu}} (\wh X_{\ov B}^{(n)}) = \ov \mu (X_{\ov B}) + R_n = 1 + R_n$.
Since
$$
\tl h_v^{(n+1)} = \sum_{w \in V_n} \tl f_{v,w}^{(n)} h_w^{(n)} +  \sum_{w \in V_n} \ov f_{v,w}^{(n)} \tl h_w^{(n)},
$$
we have
\begin{equation}\label{formula}
R_{n+1} = \sum_{v \in V_{n+1}} \sum_{w \in V_n} \tl f_{v,w}^{(n)} h_w^{(n)} \ov p_v^{(n+1)} + \sum_{v \in V_{n+1}} \sum_{w \in V_n} \ov f_{v,w}^{(n)} \tl h_w^{(n)} \ov p_v^{(n+1)}.
\end{equation}
On the other hand, we can represent the second summand in (\ref{formula}) as follows:
$$
\sum_{v \in V_{n+1}} \sum_{w \in V_n} \ov f_{v,w}^{(n)} \tl h_w^{(n)} \ov p_v^{(n+1)} = \sum_{w \in V_n} \tl h_w^{(n)} \ov p_w^{(n)} = R_n.
$$
This gives the relation
$$
R_{n+1} = R_n + \sum_{v \in V_{n+1}} \sum_{w \in V_n} \tl f_{v,w}^{(n)} h_w^{(n)} \ov p_v^{(n+1)}.
$$
Because $1 + R_1 = \widehat{\overline{\mu}}(\widehat{X}_{\ol B}^{(1)})$, we finally conclude that
\begin{eqnarray*}
\widehat{\overline{\mu}}(\widehat{X}_{\overline{B}})
  &=& \widehat{\overline{\mu}}(\widehat{X}_{\ol B}^{(1)}) + \sum_{n=1}^\infty \sum_{v\in V_{n+1}} \sum_{w\in V_{n}} \widetilde{f}_{v,w}^{(n)} h_w^{(n)} \overline{p}_v^{(n+1)}.
\end{eqnarray*}
Since $\widehat{\overline{\mu}}(\widehat{X}_{\ol B}^{(1)}) <\infty$, the statement of the theorem follows.
\end{proof}

\begin{remar}
One can notice that Theorem~\ref{edge_fin_krit} implies Theorem~\ref{thm from BKK} in the following nonrigorous way. Set
$$
\ov f_{vw}^{(n)} =
\left\{ \begin{array}{ll}
                               f_{vw}^{(n)} &  \mbox{if} \ v \in W_{n+1}, w \in W_{n}\\
                               0 &  \mbox{otherwise}.
                                \end{array}\right.
$$
Let $\ov p_v^{(n+1)} = 0$ for any $v \in W_{n+1}'$. Then we have
$$
\sum_{n=1}^{\infty} \sum_{v \in V_{n+1}} \sum_{w \in V_{n}} \widetilde{f}_{v,w}^{(n)} h_w^{(n)}\overline{p}_v^{(n+1)} = \sum_{n=1}^\infty \sum_{v \in W_{n+1}} \sum_{w \in W'_n} f_{v,w}^{(n)} h_w^{(n)} \ov p_v^{(n+1)}.
$$
\end{remar}

\begin{example} Let $B$ be a Bratteli diagram and consider the edge subdiagram $\ol B$ of $B$ which is obtained by removing only one edge from each set $E_n$ of edges between levels $n-1$ and $n$ in the diagram $B$. In other words, we have for all $n\geq 1$
$$
\wt f^{(n)}_{v,w} = \left\{ \begin{array}{ll}
                               0 &  \mbox{if} \ (v, w) \neq (v_{n+1},w_n)\\
                               1 &  \mbox{if} \ (v, w) = (v_{n+1},w_n)
                                \end{array}\right.
$$
for some $(v_{n+1},w_n) \in V_{n+1} \times V_n$ depending on $n$.
It follows from Theorem \ref{edge_fin_krit} that
$$
(\widehat{\overline{\mu}} (\widehat{X}_{\overline{B}}) < \infty) \ \Longleftrightarrow \ \sum_{n=1}^{\infty}  h_{w_n}^{(n)}\overline{p}_{v_{n+1}}^{(n+1)} < \infty.
$$

\end{example}

\begin{example}\label{ERS,ECS}
Let $B$ be a Bratteli diagram whose incidence matrices $(F_n)$ satisfy $ERS(r_n)$:  $\sum_{w \in V_n} f^{(n)}_{v,w} = r_n$ for every $v \in V_{n+1}$ and every $n$. Suppose  that $\overline{B}$ is an edge subdiagram of $B$ with incidence matrices  $\overline{F_n} \in ECS$. Hence, there exists $\ol c_n >0$ such that  $\sum_{v \in V_{n+1}} \overline{f}^{(n)}_{v,w} = \overline{c}_n$ for every $w \in V_{n}$ and every $n$. Denote by $T_n := \{(v,w) : \widetilde{f}_{v,w}^{(n)} \neq 0, \ v \in V_{n+1}, w \in V_n\}$. The set $T_n$ consists of pairs of vertices such that an edge between them has been removed from the set $E_n$ to construct $\ol B$. According to the results of Subsection \ref{classes}, there is a finite measure $\overline{\mu}$ on $X_{\overline{B}}$ such that $\overline{p}_v^{(n)} = (\overline{c}_0 \cdots \overline{c}_{n-1})^{-1}$. Then, by Theorem~\ref{edge_fin_krit},
\begin{equation}\label{ERS,ECS,neccsuff}
(\widehat{\overline{\mu}}(\widehat{X}_{\overline{B}}) < \infty)\ \Longleftrightarrow \
\sum_{n=1}^\infty \frac{r_0 \ldots r_{n-1}}{\overline{c}_0 \cdots \overline{c}_{n}} \sum_{(v,w) \in T_n} \widetilde{f}_{v,w}^{(n)} < \infty.
\end{equation}

 In order to illustrate the considered above case, we take  a Bratteli diagram  $B$ of finite rank two with incidence matrices
$$
F_n =
\begin{pmatrix}
a_n & c_n\\
d_n & b_n
\end{pmatrix},
$$
where $a_n + c_n = d_n + b_n = r_n$ for every $n$.  Denote $s_n(1) = a_n + d_n$ and $s_n(2) = c_n + b_n$. Without loss of generality, assume that $s_n(1) > s_n(2)$. Then there exist integers $x,y \geq 0$ such that $(a_n - x) + (d_n - y) = s_n(2)$. Define a subdiagram $\ol B$ whose incidence matrices are
$$
\ov F_n =
\begin{pmatrix}
a_n - x & c_n\\
d_n - y& b_n
\end{pmatrix},\ \ n\geq 1.
$$
Then $\ov F_n$ satisfies the $ECS$ property with $\ov c_n = s_n(2)$. Obviously, we have
$$
\tl F_n =
\begin{pmatrix}
x & 0\\
y& 0
\end{pmatrix}.
$$
Thus, $\sum_{v \in V_{n+1}} \sum_{w \in V_{n}} \widetilde{f}_{v,w}^{(n)} = x + y = s_n(1) - \ov c_n$.
By Theorem~\ref{edge_fin_krit}, we obtain that
$$
(\widehat{\overline{\mu}}(\widehat{X}_{\overline{B}}) < \infty) \ \Longleftrightarrow  \ \sum_{n=1}^\infty \frac{r_0 \cdots r_{n-1}}{\overline{c}_0 \cdots \overline{c}_{n}} (s_n(1) - s_n(2)) < \infty.
$$
 \end{example}

We now return to the problem of finding conditions that imply finiteness of measure extensions.
Similarly to Theorem \ref{neccsuff from BKMS} (3), we  find a sufficient condition for finiteness of $\widehat{\overline{\mu}}(\wh X_{\ov B})$ for an edge subdiagram $\ol B$ and a probability invariant measure $\ol\mu$ on $X_{\ol B}$.

Denote
$$
J = \sum_{n=1}^\infty \sum_{v \in V_{n+1}} \sum_{w \in V_n} \tl f_{v,w}^{(n)} h_w^{(n)} \ov p_v^{(n+1)}.
$$
By Theorem \ref{edge_fin_krit}, $\widehat{\overline{\mu}}(\wh X_{\ov B})$ is finite if and only if $J < \infty$.

\begin{prop}
Let $B$, $\ov B$, $\ov \mu$, and $J$ be as above. If
$$
\sum_{n=1}^\infty \max_{v \in V_{n+1}} \left( \sum_{w \in V_n} \tl f_{v,w}^{(n)} \frac{h_w^{(n)}}{\ov h_v^{(n+1)}} \right) < \infty,
$$
then $\widehat{\overline{\mu}}(\wh X_{\ov B})$ is finite.
\end{prop}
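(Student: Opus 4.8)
The plan is to invoke Theorem~\ref{edge_fin_krit}, which reduces the assertion to showing that the stated hypothesis forces $J<\infty$, and then to run the same maximum-bound argument as in the displayed proof of Theorem~\ref{neccsuff from BKMS}(3). In the present edge setting this argument turns out to be considerably shorter, so I expect no genuine obstacle; the only point worth isolating is \emph{why} the edge case avoids the difficulty that arose in the vertex case.

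First I would rewrite the inner double sum defining $J$ level by level, factoring out the subdiagram tower measures. Using $\ov p_v^{(n+1)} = \ov\mu(\ov X_v^{(n+1)})/\ov h_v^{(n+1)}$, each summand becomes
$$
\tl f_{v,w}^{(n)}\, h_w^{(n)}\, \ov p_v^{(n+1)}
= \ov\mu(\ov X_v^{(n+1)})\,\tl f_{v,w}^{(n)}\frac{h_w^{(n)}}{\ov h_v^{(n+1)}},
$$
so that, after summing over $w\in V_n$ and pulling the maximum over $v\in V_{n+1}$ out of the sum over $v$,
$$
\sum_{v\in V_{n+1}}\sum_{w\in V_n}\tl f_{v,w}^{(n)}\, h_w^{(n)}\, \ov p_v^{(n+1)}
\le \left(\max_{v\in V_{n+1}}\sum_{w\in V_n}\tl f_{v,w}^{(n)}\frac{h_w^{(n)}}{\ov h_v^{(n+1)}}\right)\sum_{v\in V_{n+1}}\ov\mu(\ov X_v^{(n+1)}).
$$

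Next I would observe that, since $\ov B$ is an \emph{edge} subdiagram, the full vertex set $V_{n+1}$ is retained at level $n+1$, and the towers $\ov X_v^{(n+1)}$, $v\in V_{n+1}$, partition $X_{\ov B}$; because $\ov\mu$ is a probability measure this gives $\sum_{v\in V_{n+1}}\ov\mu(\ov X_v^{(n+1)}) = \ov\mu(X_{\ov B}) = 1$. Summing the level-$n$ estimate over all $n$ then yields
$$
J \le \sum_{n=1}^\infty \max_{v\in V_{n+1}}\left(\sum_{w\in V_n}\tl f_{v,w}^{(n)}\frac{h_w^{(n)}}{\ov h_v^{(n+1)}}\right) < \infty,
$$
which is precisely the hypothesis, and Theorem~\ref{edge_fin_krit} completes the argument.

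The only subtlety worth flagging is the contrast with the vertex analogue. In Theorem~\ref{neccsuff from BKMS}(3) the normalizing denominator inside the maximum was the \emph{full} height $h_v^{(n+1)}$ (inherited from $q_{v,w}^{(n)}$), which forced the auxiliary comparison $h_w^{(n)}/\ov h_w^{(n)} < M$ together with the infinite-product bound on $M$. Here the hypothesis is already written with the \emph{subdiagram} height $\ov h_v^{(n+1)}$ in the denominator—exactly the quantity produced by factoring out $\ov\mu(\ov X_v^{(n+1)})=\ov p_v^{(n+1)}\ov h_v^{(n+1)}$—so no such comparison is needed and the estimate closes in a single pass. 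The one thing to verify carefully is that the towers at level $n+1$ exhaust $X_{\ov B}$, which holds precisely because an edge subdiagram keeps every vertex.
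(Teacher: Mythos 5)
Your argument is correct and is essentially identical to the paper's proof: both factor $\ov p_v^{(n+1)} = \ov\mu(\ov X_v^{(n+1)})/\ov h_v^{(n+1)}$, bound the sum over $v$ by the maximum times $\sum_{v}\ov\mu(\ov X_v^{(n+1)})=1$, and conclude via Theorem~\ref{edge_fin_krit}. (Your inequality sign in the key estimate is in fact the correct one; the paper writes an equality there by a harmless slip.)
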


\begin{proof} The general term of $J$ can be estimated as follows:
\begin{eqnarray*}
\sum_{v \in V_{n+1}} \sum_{w \in V_n} \tl f_{v,w}^{(n)} h_w^{(n)} \ov p_v^{(n+1)} &=& \sum_{v \in V_{n+1}} \sum_{w \in V_n} \tl f_{v,w}^{(n)} h_w^{(n)} \ov p_v^{(n+1)} \frac{\ov h_v^{(n+1)}}{\ov h_v^{(n+1)}}\\
 &=& \sum_{v \in V_{n+1}} \sum_{w \in V_n} \tl f_{v,w}^{(n)} h_w^{(n)} \ov \mu (\ov X_v^{(n+1)}) \frac{1}{\ov h_v^{(n+1)}}  \\
 &=&  \sum_{v \in V_{n+1}} \ov \mu (\ov X_v^{(n+1)}) \max_{v \in V_{n+1}} \left( \sum_{w \in V_n} \tl f_{v,w}^{(n)} \frac{h_w^{(n)}}{\ov h_v^{(n+1)}} \right).
\end{eqnarray*}
Since $\sum_{v \in V_{n+1}} \ov \mu (\ov X_v^{(n+1)}) = 1$, we obtain the desired result.
\end{proof}

% identify vertices with numbers

\section{The number of finite ergodic measures for finite rank Bratteli diagrams}\label{number_of_erg}
We begin with considering in detail the class of  Bratteli diagrams of rank two that have the ERS property. It turns out that this class can be studied completely, and one can find necessary and sufficient conditions for a diagram to have either a single finite ergodic measure or two finite ergodic measures.

\begin{prop}\label{rank2example}
Let $B$ be a Bratteli diagram with $2 \times 2$ incidence matrices $F_n$ satisfying $ERS$:
$$
F_n =
\begin{pmatrix}
a_n & c_n\\
d_n & b_n
\end{pmatrix},
$$
where $a_n + c_n = d_n + b_n = r_n$ for every $n$.
Then

(1) There are exactly two finite ergodic invariant measures on $B$ if and only if
\begin{equation}\label{2measures}
\sum_{k=1}^{\infty}\left(1 - \frac{|a_{k} - d_{k}|}{r_k}\right) < \infty,
\end{equation}
or, equivalently,
\begin{equation}\label{2series}
\sum_{k=1}^{\infty}\left(1 - \frac{\max\{a_{k},d_{k}\}}{r_k}\right) < \infty\  \  \mbox{ and }
\  \  \sum_{k=1}^{\infty} \frac{\min\{a_{k},d_{k}\}}{r_k} < \infty.
\end{equation}
In this case, one can point out explicitly the subdiagrams (odometers) that support these measures.

(2) There is a unique invariant measure $\mu$ on $B$ if and only if
\begin{equation}\label{1measure}
\sum_{k=1}^{\infty}\left(1 - \frac{|a_{k} - d_{k}|}{r_k}\right) = \infty.
\end{equation}
Moreover, if
\begin{equation}\label{min}
\sum_{k=1}^{\infty} \min\left\{\frac{\min\{a_{k},d_{k}\}}{r_k}, 1 - \frac{\max\{a_{k},d_{k}\}}{r_k} \right\}= \infty,
\end{equation}
then there is no odometer such that the unique measure $\mu$ would be the extension of a measure supported by this odometer. Otherwise, if for instance   (\ref{1measure}) holds and (\ref{min}) does not hold, then there is an example when the unique invariant measure is an extension from an odometer and there is an example when it is not.
\end{prop}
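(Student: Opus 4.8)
The plan is to reduce everything to a one–dimensional affine dynamical system on the tower masses and then read off the count and the supports from an infinite product. Since $B\in ERS(r_n)$, both vertices at level $n$ have the same height $h^{(n)}_1=h^{(n)}_2=H_n:=r_0\cdots r_{n-1}$, so the towers $X^{(n)}_1,X^{(n)}_2$ partition $X_B$ and every invariant probability measure $\mu$ has tower masses $m^{(n)}_i:=\mu(X^{(n)}_i)=H_n p^{(n)}_i$ with $m^{(n)}_1+m^{(n)}_2=1$. Writing $t_n:=m^{(n)}_1$ and inserting $p^{(n)}=F_n^T p^{(n+1)}$ into this normalization, I would obtain the scalar recursion
\[
t_n=g_n(t_{n+1}),\qquad g_n(t)=\frac{d_n+(a_n-d_n)\,t}{r_n},
\]
where $g_n$ is affine of slope $(a_n-d_n)/r_n$ and sends $[0,1]$ onto the interval with endpoints $a_n/r_n$ and $d_n/r_n$. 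The invariant probability measures are exactly the sequences $(t_n)\in[0,1]^{\mathbb N}$ solving this recursion, and the ergodic ones are its extreme points.

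First I would count the finite ergodic measures. For fixed $n$ the composition $g_n\circ\cdots\circ g_{m-1}$ maps $[0,1]$ onto an interval of length $\prod_{k=n}^{m-1}\frac{|a_k-d_k|}{r_k}$, and these are nested in $m$; hence the set of values of the coordinate $t_n$ attainable by an invariant measure is an interval of length $\prod_{k\ge n}\frac{|a_k-d_k|}{r_k}$. Because convergence of $\sum_k(1-\frac{|a_k-d_k|}{r_k})$ forces all but finitely many factors to be nonzero, this limiting length is positive for all large $n$ exactly when the series converges. Since the invariant measures form a segment (or a point) whose extreme points are the ergodic measures, and a rank two diagram carries at most two ergodic measures, I conclude: the segment is nondegenerate, i.e. there are exactly two finite ergodic measures, iff $\sum_k(1-\frac{|a_k-d_k|}{r_k})<\infty$, which is (\ref{2measures}); otherwise the segment collapses and $\mu$ is unique, which is (\ref{1measure}). (Levels with $a_k=d_k$ only pin one coordinate and each adds a unit to the series, so they are consistent with the criterion; this is why one tracks a tail coordinate $t_n$ rather than $t_1$.) The equivalence (\ref{2measures})$\Leftrightarrow$(\ref{2series}) is then immediate from
\[
1-\frac{|a_k-d_k|}{r_k}=\Big(1-\frac{\max\{a_k,d_k\}}{r_k}\Big)+\frac{\min\{a_k,d_k\}}{r_k}=:\delta_k+\gamma_k,
\]
a sum of two nonnegative series, which converges iff both summands do.

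To exhibit the odometers in case (1), I would follow the dominant edges. Under (\ref{2series}) one of $a_k,d_k$ is eventually near $r_k$ and the other near $0$, so from each vertex the dominant out-edge is well defined (after telescoping finitely many initial levels where it is not), giving two single-vertex-per-level subdiagrams $O^{(1)},O^{(2)}$ starting at vertices $1$ and $2$; the ``stay/swap'' bookkeeping shows they occupy different vertices at every level. By (\ref{extension_method_vertex}) a single-vertex route $\rho=(\rho_k)$ with on-route multiplicities $e_k=f^{(k)}_{\rho_{k+1},\rho_k}$ has extension mass $\prod_k r_k/e_k$, finite iff $\sum_k(1-e_k/r_k)<\infty$. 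A direct computation of $e_k$ in both the stay and swap cases gives per-level loss $\delta_k$ along $O^{(1)}$ and $\gamma_k$ along $O^{(2)}$. Hence under (\ref{2series}) both extensions are finite; being extensions of uniquely ergodic odometers they are ergodic, and being supported on disjoint saturations they are mutually singular, so they are two distinct finite ergodic measures and therefore the two produced by the count.

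Finally, for the refinement in (2): as $\mu$ is the only finite invariant measure, any single-vertex route with finite extension is (after normalization) equal to $\mu$, so $\mu$ is an odometer extension iff some route $\rho$ has $\sum_k(1-e^\rho_k/r_k)<\infty$. The key observation is that the minimal one-step loss from vertex $1$ is $\min\{b_k,c_k\}/r_k=1-\frac{\max\{a_k,d_k\}}{r_k}=\delta_k$, and from vertex $2$ it is $\min\{a_k,d_k\}/r_k=\gamma_k$; hence every route incurs total loss at least $\sum_k\min\{\gamma_k,\delta_k\}$. Therefore (\ref{min}) forces every extension to be infinite, so $\mu$ is not an odometer extension. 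When (\ref{min}) fails while (\ref{1measure}) holds I would produce both outcomes: if the cheaper vertex is eventually constant, staying on it yields loss $\sum_k\min\{\gamma_k,\delta_k\}<\infty$ and $\mu$ is that odometer's extension; whereas if the cheaper vertex alternates and every switch between vertices is costly, one checks that every admissible route accumulates infinite loss, so $\mu$ is not an extension. I expect this last step to be the main obstacle: a route must be a connected path, so realizing the lower bound $\sum_k\min\{\gamma_k,\delta_k\}$ demands free switching, and the delicate part is arranging $(a_k,b_k,c_k,d_k)$ so that $\sum_k\min\{\gamma_k,\delta_k\}<\infty$ yet the switching costs defeat every route.
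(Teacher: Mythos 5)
Your proposal follows essentially the same route as the paper: your affine recursion $t_n=g_n(t_{n+1})$ is the paper's equation (\ref{Gdef}) read in the normalized coordinate $t_n=r_0\cdots r_{n-1}\,p_0^{(n)}$, the nested-interval/infinite-product criterion for one versus two ergodic measures is identical, your finiteness test $\sum_k(1-e_k/r_k)<\infty$ for an odometer extension is exactly the paper's Claim (\ref{short}), and your lower bound $\min\{\gamma_k,\delta_k\}=\min\{a_k,b_k,c_k,d_k\}/r_k$ on the per-level loss of any route is precisely how the paper rules out odometer extensions under (\ref{min}). Two small caveats: the per-level losses need not be uniformly $\delta_k$ along $O^{(1)}$ and $\gamma_k$ along $O^{(2)}$ (when the dominant edges cross, the two routes trade roles), though this is harmless since under (\ref{2series}) both $\sum_k\delta_k$ and $\sum_k\gamma_k$ converge and each route's total loss is bounded by their sum; and the pair of explicit examples for the final clause — which you correctly flag as the delicate point, since ``staying on the cheaper vertex'' need not realize the one-step minimum along a connected route — is not actually constructed, but the paper likewise defers these to Example \ref{infty} and the example following it.
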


%Nado podumat eshe o bolee udachnoi formulirovke.

\begin{proof} Without loss of generality, we can assume that $r_0 =1$, that is the diagram has single edges between $V_0$ and $V_1$.

Let $\mu$ be any probability invariant measure on $B$.
Let $p_0^{(n)}$ and $p_1^{(n)}$ be the measures of cylinder sets of length $n$ that end at the vertices $v_0\in V_n$ and $v_1 \in V_n$, respectively. Then we have  for any $n \geq 1$
$$
p_0^{(n)} = a_n p_0^{(n+1)} + d_n p_1^{(n+1)},
$$
$$
p_1^{(n)} = c_n p_0^{(n+1)} + b_n p_1^{(n+1)}.
$$
 Clearly, $p_0^{(1)} + p_1^{(1)} = \dfrac{1}{r_0}  = 1$. It is easy to see that $h_i^{(n)} = r_0 \cdots r_{n-1}$, $i = 0,1$, and
$$
p_0^{(n)} + p_1^{(n)} = \frac{1}{r_0 \cdots r_{n-1}}
$$
for any $n \geq 1$.
Hence,
$$
p_0^{(n)} = a_n p_0^{(n+1)} + d_n \left(\frac{1}{r_0 \cdots r_{n}} - p_0^{(n+1)}\right) = (a_n - d_n) p_0^{(n+1)} + \frac{d_n}{r_0 \cdots r_{n}}.
$$
Therefore we have
\begin{equation}\label{composition}
p_0^{(n)} = \prod_{k=0}^m (a_{n+k} - d_{n+k})p_0^{(n+m+1)} + C_{n,m},
\end{equation}
where
$$
C_{n,m} = \frac{d_n}{r_0 \ldots r_n} + \sum_{k=0}^{m-1}(a_n - d_n)\cdots (a_{n+k} - d_{n+k})\frac{d_{n+k+1}}{r_0 \ldots r_{n+k+1}}
$$
for $m = 1,2, \ldots$ In particular,  $C_{n,0} = \dfrac{d_n}{r_0 \cdots r_n}$.

Thus, the measure $\mu$ is completely determined by the sequence of numbers $\{p_0^{(n)}\}$ such that $0 \leq p_0^{(n)} \leq \dfrac{1}{r_0 \cdots r_{n-1}}$ and
\begin{equation}\label{Gdef}
p_0^{(n)} = (a_n - d_n)p_0^{(n+1)} + \frac{d_n}{r_0 \cdots r_n}.
\end{equation}

Denote by $\Delta^{(n)}$ the interval $\left[0, \dfrac{1}{r_0 \cdots r_{n-1}}\right]$.
Then we have a sequence of maps
$$
\Delta^{(1)} \stackrel{G_1}{\longleftarrow} \Delta^{(2)} \stackrel{G_2}{\longleftarrow} \Delta^{(3)} \stackrel{G_3}{\longleftarrow} \ldots
$$
where the linear map $G_n$ is defined by (\ref{Gdef}). Relation (\ref{composition}) presents the composition
$$
G_n \circ \ldots \circ G_{n+m} \colon \Delta^{(n+m+1)} \rightarrow \Delta^{(n)}.
$$
We have $G_n \circ \ldots \circ G_{n+m}(0) = C_{n,m}$ and
$$
G_n \circ \ldots \circ G_{n+m}\left(\frac{1}{r_0 \cdots r_{n+m}}\right) = \frac{1}{r_0 \cdots r_{n-1}}\prod_{k=0}^m \frac{(a_{n+k} - d_{n+k})}{r_{n+k}} + C_{n,m}.
$$
Thus, since  $G_n$ is a linear map in variable $p_0^{(n+1)}$, we obtain
$$
|G_n \circ \ldots \circ G_{n+m}(\Delta^{(n+m+1)})| = \frac{1}{r_0 \cdots r_{n-1}}\prod_{k=0}^m \frac{|a_{n+k} - d_{n+k}|}{r_{n+k}}
$$
where $|\Delta|$ stands for the length of an interval $\Delta$.
It follows from this relation  that there is a unique measure $\mu$ on $B$ if and only if
$$
\prod_{k=0}^\infty \frac{|a_{k} - d_{k}|}{r_k} = 0
$$
or, equivalently,
$$
\sum_{k=1}^{\infty}\left(1 - \frac{|a_{k} - d_{k}|}{r_k}\right) = \infty.
$$

On the other hand, the diagram $B$ has two ergodic measures on $B$ if and only if
$$
\sum_{k=1}^{\infty}\left(1 - \frac{|a_{k} - d_{k}|}{r_k}\right) < \infty.
$$
We notice that
$$
\sum_{k=1}^{\infty}\left(1 - \frac{|a_{k} - d_{k}|}{r_k}\right) = \sum_{k=1}^{\infty}\left(1 - \frac{\max\{a_{k},d_{k}\}}{r_k}\right) + \sum_{k=1}^{\infty} \frac{\min\{a_{k},d_{k}\}}{r_k}.
$$
Therefore
$$
\sum_{k=1}^{\infty}\left(1 - \frac{|a_{k} - d_{k}|}{r_k}\right) < \infty
$$
if and only if
\begin{equation*}%\label{2series}
\sum_{k=1}^{\infty}\left(1 - \frac{\max\{a_{k},d_{k}\}}{r_k}\right) < \infty\  \  \mbox{ and }
\  \  \sum_{k=1}^{\infty} \frac{\min\{a_{k},d_{k}\}}{r_k} < \infty.
\end{equation*}
In this case, each ergodic measure is the extension of a measure from an odometer. We show how to find these odometers. Suppose that (\ref{2series}) holds.

\vskip0.3cm
\textbf{\textit{Claim}}. A measure $\mu$ is an extension from an odometer $\ov B (W_n)$ if and only if
\begin{equation}\label{short}
\sum_{n=1}^{\infty} \frac{f_{w_{n+1},v_n'}^{(n)}}{r_n} < \infty,
\end{equation}
where $W^{'}_n = \{v^{'}_n\}$ and $W_{n+1} = \{w_{n+1}\}$.
\vskip0.3cm

\textit{Proof of the claim}. It is easy to see that in our case
\begin{equation}\label{2odomext}
\sum_{n=1}^\infty \sum_{v \in W_{n+1}} \sum_{w \in W'_n} f_{v,w}^{(n)} h_w^{(n)} \ov p_v^{(n+1)} = \sum_{n=1}^\infty \frac{r_0 \cdots r_{n-1}}{\prod_{k=1}^n f_{w_{k+1},w_k}^{(k)}} f_{w_{n+1},v_n'}^{(n)}.
\end{equation}
By Theorem \ref{thm from BKK}, the measure extension $\wh{\ov \mu}$ is finite if and only if the series (\ref{2odomext}) converges.
We observe that
$$
\sum_{n=1}^\infty \frac{r_0 \cdots r_{n-1}}{\prod_{k=1}^n f_{w_{k+1},w_k}^{(k)}} f_{w_{n+1},v_n'}^{(n)} < \infty\  \Longleftrightarrow \ \sum_{n=1}^{\infty} \frac{f_{w_{n+1},v_n'}^{(n)}}{r_n} < \infty.
$$
Indeed, since $\dfrac{r_0 \cdots r_{n-1}}{\prod_{k=1}^n f_{w_{k+1},w_k}^{(k)}} > 1$, the direction ``$\Longrightarrow$'' is clear. To prove ``$\Longleftarrow$'', we use equality
$$
1 - \frac{f_{w_{n+1},v'_n}^{(n)}}{r_n} = \frac{f_{w_{n+1},v'_n}^{(n)}}{r_n}.
$$
Hence,
$$
\sum_{n=1}^\infty \frac{f_{w_{n+1},v_n'}^{(n)}}{r_n} < \infty\ \Longleftrightarrow \ \prod_{n=1}^\infty \left(1 - \frac{f_{w_{n+1},v_n'}^{(n)}}{r_n} \right) > 0 \ \Longleftrightarrow \ \prod_{n=1}^\infty \frac{f_{w_{n+1},v'_n}^{(n)}}{r_n} > 0.
$$
Therefore, $\prod_{n=1}^\infty \dfrac{r_n}{f_{w_{n+1},v'_n}^{(n)}} < \infty$ and there exists $K > 0$ such that $\prod_{n=1}^{N} \dfrac{r_n}{f_{w_{n+1},v'_n}^{(n)}} < K$ for every $N$, and the claim is proved.
\vskip0.3cm

Thus, if (\ref{2series}) holds we have
$$
\sum_{k=1}^{\infty} \frac{\min\{f^{(k)}_{11},f^{(k)}_{21}\}}{r_k} < \infty.
$$
Notice that, by the ERS property, the equality $(1 - \max\{a_{k},d_{k}\}) = \min\{b_{k},c_{k}\}$ holds.
Hence if $f^{(k)}_{11} < f^{(k)}_{21}$ then $f^{(k)}_{22} < f^{(k)}_{12}$. Therefore, in order to obtain (\ref{short}), one of the odometers should go through vertices $\{w_1^{(k)}, w_2^{(k+1)}\}$ and the other through vertices $\{w_2^{(k)}, w_1^{(k+1)}\}$. Otherwise, these odometers go through vertices $\{w_1^{(k)}, w_1^{(k+1)}\}$ and $\{w_2^{(k)}, w_2^{(k+1)}\}$. Thus, for each two consecutive levels, we define two disjoint sets of vertices. The choice of one of these sets for the first and second level uniquely defines $\ov B (W_n)$. Indeed, suppose that $f^{(1)}_{11} < f^{(1)}_{21}$ and we choose $\{w_1^{(1)},  w_2^{(2)}\}$. Then to form a subdiagram, the next set of vertices should contain $w_2^{(2)}$ and so on. We observe that in this case we do not need the procedure of telescoping to find subdiagrams supporting finite ergodic measures.

Now we prove the last part of the theorem. Denote
\begin{eqnarray*}
m_k &=& \min\left\{\frac{\min\{a_{k},d_{k}\}}{r_k}, 1 - \frac{\max\{a_{k},d_{k}\}}{r_k} \right\}\\
&=& \frac{\min\{a_{k}, b_{k}, c_{k}, d_{k}\}}{r_k}.
\end{eqnarray*}
%We notice that
%$$
%1 - \frac{\max\{a_{k},d_{k}\}}{r_k} = \frac{\min\{b_{k},c_{k}\}}{r_k}.
%$$
%Hence
%$$
%m_n = \frac{\min\{a_{k}, b_{k}, c_{k}, d_{k}\}}{r_k}.
%$$
Suppose that $\mu$ is an extension of a measure $\ov \mu$ from an odometer defined by vertices $(w_n)$.
We obtain
\begin{equation}\label{m_n}
\sum_{n=1}^{\infty} \sum_{v \in W_{n+1}} \sum_{w \in W_n'} \frac{f_{v,w}^{(n)}}{r_n} \geq \sum_{n=1}^{\infty} m_n = \infty.
\end{equation}
As it was proved above, a measure is an extension from an odometer $\ov B (W_n)$ if and only if
Eq.~(\ref{short}) holds.
Hence, inequality (\ref{m_n}) implies that that the extension of a measure from any odometer is infinite.

For the case when~(\ref{1measure}) holds and~(\ref{min}) does not hold, the examples of measures are provided in the next series of examples.
\end{proof}

%It was proved in Proposition \ref{rank2example} that,  in case of two ergodic finite
%measures, they are obtained as extensions of measures supported by odometers.
%It is interesting to find out what support can have  a unique ergodic measure $\mu$.

The following example provides us with a diagram $B$ and unique ergodic invariant measure $\mu$ such that (\ref{1measure}) holds, (\ref{min}) does not hold, and $\mu$ is an extension from an odometer.

\begin{example}\label{infty}

Consider a Bratteli diagram $B$ of rank two with incidence matrices
$$
F_{2n+1} =
\begin{pmatrix}
2 & a_n\\
\dfrac{a_n}{2} + 1 & \dfrac{a_n}{2} + 1
\end{pmatrix},
$$
where $a_n$ is an even number and
$$
\sum_{n=1}^\infty \frac{1}{a_n} < \infty.
$$
Let
$$
F_{2n} =
\begin{pmatrix}
\dfrac{a_n}{2} + 1 & \dfrac{a_n}{2} + 1\\
a_n & 2
\end{pmatrix}.
$$
All matrices $F_n$ have the property ERS with row sum $r_{2n} = r_{2n + 1} = a_n + 2$. Consider the series (\ref{2series}) for this example. We have
$$
\frac{\min\{f_{11}^{(2n)}, f_{21}^{(2n)}\}}{a_n + 2} = \frac{1}{2}; \quad 1 - \frac{\max\{f_{11}^{(2n)}, f_{21}^{(2n)}\}}{a_n + 2} = \frac{2}{a_n + 2}.
$$
Also
$$
\frac{\min\{f_{11}^{(2n+1)}, f_{21}^{(2n+1)}\}}{a_n + 2} = \frac{2}{a_n + 2}; \quad 1 - \frac{\max\{f_{11}^{(2n+1)}, f_{21}^{(2n+1)}\}}{a_n + 2} = \frac{1}{2}.
$$
Hence, both of the series~(\ref{2series}) diverge, but the series~(\ref{min}) converges. By Proposition~\ref{rank2example}, there is a unique invariant probability measure on $X_B$. Consider a vertex subdiagram $\ov B$ of $B$ such that $W_n$ consists of the second vertex for $n$ odd and of the first vertex for $n$ even. Therefore, the incidence matrices for $\ov B$ are $\ov F_{2n} = \ov F_{2n+1} = a_n$. Let $\ov \mu$ be the probability invariant measure on $\ov B$. Then $\wh{\ov \mu}(\wh X_{\ov B})$ is finite. Indeed, it suffices to check that
$$
\sum_{n=1}^{\infty} \frac{f_{w_{n+1},v_n'}^{(n)}}{r_n} =  \frac{2}{a_0 + 2} + 2 \sum_{n=1}^{\infty} \frac{2}{a_n + 2} < \infty
$$
and apply Proposition \ref{rank2example}. Hence, the unique measure $\mu$ is an extension of measure $\ov \mu$ from odometer $X_{\ov B}$. Thus, both series in~(\ref{2series}) diverge, but the measure $\mu$ is not an extension from some odometer.
\end{example}

\begin{remar}
Example~\ref{infty} can be slightly modified to obtain a situation when one of the series in~(\ref{2series}) diverges and the other one converges. For instance, let
$$
F_{n} =
\begin{pmatrix}
a_n & 2\\
\dfrac{a_n}{2} + 1 & \dfrac{a_n}{2} + 1
\end{pmatrix}
$$
 for every $n$. Set $W_n$ to be the first vertex on each level. Then the measure $\mu$ is an extension of a measure from $X_{\ov B}$; in this particular situation, we do not need the procedure of telescoping to find a subdiagram $\ov B$.
\end{remar}

In the next example, we provide a diagram $B$ and unique ergodic invariant measure $\mu$ such that~(\ref{min}) does not hold and $\mu$ is not an extension from any odometer.
\begin{example}
Suppose now that
$$
F_{n} =
\begin{pmatrix}
2 & a_n\\
\dfrac{a_n}{2} + 1 & \dfrac{a_n}{2} + 1
\end{pmatrix}.
$$
Then there is a unique ergodic invariant measure $\mu$ which is not an extension from some odometer. Indeed, suppose that there is an odometer $\ov B$ sitting on $ \{w_n\}$ and supporting $\mu$. Then
\begin{equation*}
\sum_{n=1}^{\infty} \frac{f_{w_{n+1},w_n'}^{(n)}}{r_n} < \infty.
\end{equation*}
Notice that
$$
\frac{f_{12}^{(n)}}{r_n} > \frac{1}{2} \mbox{ and } \frac{f_{21}^{(n)}}{r_n} = \frac{f_{22}^{(n)}}{r_n} = \frac{1}{2}
$$
for every $n$. Thus, series $\sum_{n=1}^{\infty} \frac{f_{w_{n+1},w_n'}^{(n)}}{r_n}$ should have infinitely many elements $\frac{f_{11}^{(n)}}{r_n}$ and finitely many others, but that's obviously impossible. Thus, even in case when one of the series in~(\ref{2series}) converges and the other diverges, the measure might not be an extension from some odometer.
\end{example}

\begin{remar}
In the previous example, set $a_n = 2^n$ and telescope the diagram with respect to odd levels. Then the new incidence matrices must be
$$
\begin{pmatrix}
2 & 2^n\\
2^{n-1} + 1 & 2^{n-1} + 1
\end{pmatrix}
\begin{pmatrix}
2 & 2^{n-1}\\
2^{n-2} + 1 & 2^{n-2} + 1
\end{pmatrix}
=
$$
$$
\begin{pmatrix}
2^{2n-2}+2^n+4  & 2^{2n-2}+2^{n+1}\\
2^{2n - 3}+2^n+2^{n-1}+2^{n-2}+3 & 2^{2n - 2}+2^{2n-3}+2^{n}+2^{n-2}+1
\end{pmatrix}
$$
and the new row sum of the matrix is  $r_n r_{n-1} = 2^{2n - 1}+2^{n+1}+2^{n}+4$. We see that
$$
\sum_{k=1}^{\infty} \min\left\{\frac{\min\{a_{k},d_{k}\}}{r_k}, 1 - \frac{\max\{a_{k},d_{k}\}}{r_k} \right\}= \infty,
$$
and this proves that the measure is not an extension from some odometer. Besides we have shown that convergence of one of the series in~(\ref{2series}) is not preserved under telescoping. On the other hand,  the number of ergodic measures is  preserved, so that the conditions in Proposition \ref{rank2example} are not invariant under telescoping.

\end{remar}

\begin{example}
The examples given in \cite{BKMS_2013} (see Example 4.13 and Remark 5.9 there) illustrate our  Proposition \ref{rank2example}. In case when
$$
F_n =
\begin{pmatrix}
n^2 & 1\\
1 & n^2
\end{pmatrix},
$$
there are two finite ergodic invariant measures.
In case when
$$
F_n =
\begin{pmatrix}
n & 1\\
1 & n
\end{pmatrix},
$$
there is a unique invariant measure $\mu$ which is not an extension from any odometer.
\end{example}
\medskip

As known, any Bratteli diagram of rank $k$ can have at most $k$ ergodic measures. The next result gives a necessary and sufficient condition under which such a diagram has exactly $k$ ergodic measures.

\begin{thm}
Let $B = (V,E)$ be a Bratteli diagram of rank $k \geq 2$; identify $V_n$ with $\{1, ... ,k\}$ for any $n \geq 1$. Let $F_n = (f_{i,j}^{(n)})$ form a sequence of  incidence matrices of $B$ such that
$\sum_{j \in V_n} f_{i,j}^{(n)} = r_n \geq 2$ for every $i \in V_{n+1}$. Suppose that  $\rank\ F_n = k$ for all $n$. Denote
$$
z^{(n)} =
\det
\begin{pmatrix}
\dfrac{f_{1,1}^{(n)}}{r_n} & \ldots & \dfrac{f_{1,{k-1}}^{(n)}}{r_n} & 1\\
\vdots & \ddots & \vdots & \vdots\\
\dfrac{f_{{k},1}^{(n)}}{r_n} & \ldots & \dfrac{f_{{k},{k-1}}^{(n)}}{r_n} & 1
\end{pmatrix}.
$$
Then there exist exactly $k$ ergodic invariant measures on $B$ if and only if
$$
\prod_{n = 1}^{\infty} |z^{(n)}| > 0,
$$
or, equivalently,
$$
\sum_{n = 1}^{\infty}(1 - |z^{(n)}|) < \infty.
$$
\end{thm}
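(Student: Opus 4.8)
The plan is to recast invariant measures as an inverse limit of simplices and to read off the number of ergodic measures from the asymptotic $(k-1)$-dimensional volume of that limit. First I would normalize. Put $A_n = r_n^{-1}F_n^T$ and rescale the measure vectors by $q^{(n)} = r_0\cdots r_{n-1}\, p^{(n)}$. The $ERS$ hypothesis says exactly that every column of $F_n^T$ sums to $r_n$, so each $A_n$ is column-stochastic, and relation (\ref{formula for measures}) becomes $q^{(n)} = A_n\, q^{(n+1)}$ with every $q^{(n)}$ lying in the simplex $\Sigma = \{q \in \mathbb{R}^k : q_v \geq 0,\ \sum_v q_v = 1\}$ (here $\mu(X_B)=1$ translates into $\sum_v q_v^{(n)} = 1$, using $h_v^{(n)} = r_0\cdots r_{n-1}$). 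Thus finite invariant probability measures correspond bijectively to coherent sequences $(q^{(n)})$, i.e.\ to points of the inverse limit $M_\infty = \varprojlim(\Sigma, A_n)$, and the ergodic measures are precisely the extreme points of $M_\infty$.

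Next I would reduce $M_\infty$ to a single convex body. Since $\rank F_n = k$, each $A_n$ is invertible, so $K_m := A_1\cdots A_m(\Sigma)$ is a nested decreasing sequence of nondegenerate $(k-1)$-simplices, and I set $K_\infty := \bigcap_m K_m$. The first-coordinate projection $\pi_1\colon M_\infty \to \Sigma$ has image exactly $K_\infty$ (a point lies in every $K_m$ iff a full coherent sequence over it exists, by a compactness/diagonal argument), and $\pi_1$ is injective because each $A_n$ is; hence $\pi_1$ is an affine isomorphism $M_\infty \cong K_\infty$. Using the fact recalled in the Introduction that the invariant measures of a finite-rank diagram form a simplex with at most $k$ ergodic measures, $K_\infty$ is an honest finite-dimensional simplex, so its number of extreme points equals $\dim K_\infty + 1$. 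Consequently there are exactly $k$ ergodic measures if and only if $\dim K_\infty = k-1$, i.e.\ if and only if $K_\infty$ has positive $(k-1)$-dimensional volume.

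The remaining step is the volume computation, and this is where $z^{(n)}$ enters. I would first check $z^{(n)} = \det A_n$: because $A_n$ is column-stochastic, adding columns $1,\dots,k-1$ to the last column of $A_n^T$ replaces it by the all-ones vector, which exhibits $\det A_n^T$ as the determinant defining $z^{(n)}$; hence $|z^{(n)}| = |\det A_n| = r_n^{-k}|\det F_n|$. Since $\mathbf 1^T A_n = \mathbf 1^T$, the map $A_n$ preserves the linear hyperplane $H_0 = \{x : \sum_v x_v = 0\}$ parallel to $\Sigma$ and acts trivially on the one-dimensional quotient $\mathbb{R}^k/H_0$, so $|\det(A_n|_{H_0})| = |\det A_n| = |z^{(n)}|$; this $|z^{(n)}|$ is exactly the factor by which $A_n$ scales $(k-1)$-volume inside the affine hyperplane containing $\Sigma$. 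Therefore $\mathrm{vol}_{k-1}(K_m) = \mathrm{vol}_{k-1}(\Sigma)\prod_{i=1}^m |z^{(i)}|$, and letting $m\to\infty$ gives $\mathrm{vol}_{k-1}(K_\infty) = \mathrm{vol}_{k-1}(\Sigma)\prod_{i=1}^\infty |z^{(i)}|$.

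Combining the two reductions yields the theorem: there are exactly $k$ ergodic measures $\iff \mathrm{vol}_{k-1}(K_\infty) > 0 \iff \prod_n |z^{(n)}| > 0$; and since each factor satisfies $|z^{(n)}| \in (0,1]$ (stochasticity forces $|\det A_n| \le 1$, invertibility forces $|z^{(n)}| > 0$), the standard infinite-product criterion makes this equivalent to $\sum_n (1 - |z^{(n)}|) < \infty$. I expect the main obstacle to be the rigorous identification of the number of ergodic measures with the number of extreme points of $K_\infty$, namely verifying that $\pi_1$ is a bijection onto $K_\infty$ and that $K_\infty$ is genuinely a simplex (so that ``full-dimensional'' is equivalent to ``$k$ extreme points''); the volume-scaling identity $|\det(A_n|_{H_0})| = |z^{(n)}|$ is the other point requiring care, though it reduces to the elementary determinant manipulation above.
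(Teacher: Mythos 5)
Your proposal is correct and follows essentially the same route as the paper: both realize the probability invariant measures as the nested intersection of simplices $G_n\circ\cdots\circ G_{n+m-1}(\Delta^{(n+m)})$ with $G_n=F_n^T$, use that this intersection is a simplex with at most $k$ vertices (so $k$ vertices is equivalent to positive $(k-1)$-volume), and identify $|z^{(n)}|$ as the per-step $(k-1)$-volume ratio. Your normalization to column-stochastic matrices and the column-operation identity $z^{(n)}=\det(r_n^{-1}F_n^T)$ is a slightly cleaner way to obtain the same volume factor that the paper computes directly from the vertex coordinates of the image simplex.
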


\begin{proof}
We will use the ideas from the proof of Proposition \ref{rank2example}.
Let $\mu$ be any probability invariant measure on $B$.
Recall that $p_i^{(n)}$ denotes the measure of a cylinder set of length $n$ that ends at the vertex
$i \in V_n$. Since the incidence matrices $F_n$ satisfy ERS, we observe that, for any $n \geq 1$,
$$
\sum_{i = 1}^{k} p_i^{(n)} = \frac{1}{r_0 \cdots r_{n-1}}.
$$
We also have
$$
p^{(n)} = F^T_n p^{(n+1)}
$$
for every $n \in \mathbb{N}$.

%Hence for $1 \leq i \leq k-1$, we obtain
%$$
%p_i^{(n)} = \sum_{i = 1}^{k-1} (M_n)_{ij} p_j^{(n+1)} + (C_n)_i,
%$$
%where
%$$
%M_n =
%\begin{pmatrix}
%f_{1,1}^{(n)} - f_{k,1}^{(n)} & \ldots & f_{k-1,1}^{(n)} - f_{k,1}^{(n)}\\
%\vdots & \ddots & \vdots\\
%f_{1,k - 1}^{(n)} - f_{k,k - 1}^{(n)} & \ldots & f_{k-1,k - 1}^{(n)} - f_{k,k - 1}^{(n)}
%\end{pmatrix},
%\;
%C_n =
%\begin{pmatrix}
%\frac{f_{k,1}^{(n)}}{r_0 \ldots r_{n}}\\
%\vdots\\
%\frac{f_{k,k - 1}^{(n)}}{r_0 \ldots r_{n}}
%\end{pmatrix}.
%$$
%Denote $G_n(\textbf{x}) = M_n(\textbf{x}) + C_n$ for every $\textbf{x} \in \mathbb{R}^{k-1}$.
%$$
%\Delta^{(n)} = \left\{(x_1, \ldots, x_{k-1})^T : x_i \geq 0 \mbox{ for all } 1 \leq i \leq k-1
%\mbox{ and } \Sigma_{i = 1}^{k-1} x_i \leq \frac{1}{r_0 \ldots r_{n-1}}\right\}.
%$$
%Thus, $\Delta^{(n)} \subset \mathbb{R}^{k-1}$ and we have a sequence of mappings
%$$
%\Delta^{(1)} \stackrel{G_1}{\longleftarrow} \Delta^{(2)} \stackrel{G_2}{\longleftarrow}
%\Delta^{(3)} \stackrel{G_3}{\longleftarrow} \ldots
%$$
%Since $\rank F = k$, each $G_n$ is injective.

Denote
$$
\Delta^{(n)} := \{(x_1, \ldots, x_k)^T : \sum_{i = 1}^k x_i = \frac{1}{r_0 \cdots r_{n-1}}
\ \mbox{ and } \ x_i   \geq 0, \  1 \leq i \leq k \}.
$$
Then $\Delta^{(n)} \subset \mathbb{R}^{k}_{+}$ contains all possible values for  vectors
 $(p_1^{(n)}, \ldots, p_k^{(n)})$ corresponding to invariant probability measures on $B$.
Denote $G_n = F_n^{T}$. Then we have
$$
\Delta^{(1)} \stackrel{G_1}{\longleftarrow} \Delta^{(2)} \stackrel{G_2}{\longleftarrow} \Delta^{(3)} \stackrel{G_3}{\longleftarrow} \ldots
$$
Since $\rank\ F = k$, each $G_n$ is injective. Let
$$
\Delta_m^{(n)} = G_n \circ \ldots \circ G_{n+m-1}(\Delta^{(n+m)}).
$$
for $m = 1,2 \ldots$ Since each $F_n$ is a non-negative matrix
%satisfying the ERS property,
we have for any fixed $n$
$$
\Delta^{(n)} \supset \Delta_1^{(n)} \supset \Delta_2^{(n)} \supset \ldots
$$
For every $m \geq 1$, the mappings
$$
G_n \circ \ldots \circ G_{n + m - 1} \colon \Delta^{(n + m)} \longrightarrow \Delta_{m}^{(n)} \subset \Delta^{(n)}
$$
are one-to-one and onto. The sets $\Delta^{(n)}_m$ are simplices in $\mathbb{R}^{k-1}$ with vertices (considered as points in $\mathbb{R}^{k-1}$)
$\{G_n \circ \ldots \circ G_{n+m-1}(e_i^{(n+m)}) : i = 1, ... ,k\}$, where
$$
e_i^{(n)} = \left( 0,\ldots, 0, \frac{1}{r_0 \ldots r_{n-1}}, 0, \ldots, 0  \right)^{T}
$$
and the non-zero element corresponds to the $i$-th coordinate.
In particular,
$$
G_n\left(e_i^{(n+1)}\right) = \left(\frac{f^{(n)}_{i1}}{r_0 \ldots r_{n}}, \ldots, \frac{f^{(n)}_{ik}}{r_0 \ldots r_{n}} \right)^{T} \in \Delta^{(n)}.
$$
The intersection $\Delta^{(n)}_{\infty} = \bigcap_{m=1}^{\infty} \Delta_m^{(n)}$ is a simplex with at most $k$ vertices  (see \cite{phelps:2001, pullman:1971}). Hence $\Delta^{(n)}_{\infty}$ has $k$ vertices if and only if the $(k-1)$-dimensional Lebesgue measure ${\rm vol}_{k-1}(\Delta^{(n)}_{\infty})$ is positive.
We have ${\rm vol}_{k-1}(\Delta^{(n)}_{\infty}) = \lim_{m \rightarrow \infty} {\rm vol}_{k-1}(\Delta^{(n)}_{m})$.

Consider
\begin{multline*}
\frac{{\rm vol}_{k-1}(\Delta^{(n)}_{m})}{{\rm vol}_{k-1}(\Delta^{(n)})} = \frac{{\rm vol}_{k-1}[G_n \circ \ldots \circ G_{n+m}(\Delta^{(n+m+1)})]}{{\rm vol}_{k-1}[G_n \circ \ldots \circ G_{n+m-1}(\Delta^{(n+m)})]}\\
 \cdot \frac{{\rm vol}_{k-1}[G_n \circ \ldots \circ G_{n+m-1}(\Delta^{(n+m)})]}{{\rm vol}_{k-1}[G_n \circ \ldots \circ G_{n+m-2}(\Delta^{(n+m-1)})]} \cdot \ldots \cdot \frac{{\rm vol}_{k-1}[G_n (\Delta^{(n+1)})]}{{\rm vol}_{k-1}(\Delta^{(n)})}.
\end{multline*}
Since for $m = 0,1, \ldots$ and $n \geq 1$,
$$
\frac{{\rm vol}_{k-1}[G_n \circ \ldots \circ G_{n+m}(\Delta^{(n+m+1)})]}{{\rm vol}_{k-1}[G_n \circ \ldots \circ G_{n+m-1}(\Delta^{(n+m)})]} =  \frac{{\rm vol}_{k-1}[G_{n+m} (\Delta^{(n+m+1)})]}{{\rm vol}_{k-1}(\Delta^{(n+m)})}
$$
 we have
$$
\frac{{\rm vol}_{k-1}[\Delta^{(n)}_{m}]}{{\rm vol}_{k-1}[\Delta^{(n)}]} = \frac{{\rm vol}_{k-1}[G_n(\Delta^{(n+1)})]}{{\rm vol}_{k-1}[\Delta^{(n)}]} \cdot \ldots \cdot \frac{{\rm vol}_{k-1}[G_{n+m}(\Delta^{(n+m+1)})]}{{\rm vol}_{k-1}[\Delta^{(n+m)}]}.
$$
On the other hand, it can be easily seen that
$$
{\rm vol}_{k-1}(\Delta^{(n)}) = \frac{1}{(k-1)!}\left(\frac{1}{r_0 \cdots r_{n-1}} \right)^{k-1}
$$
and
$$
{\rm vol}_{k-1}(\Delta^{(n)}_1) = \frac{1}{(k-1)!} \left|\det
\begin{pmatrix}
\dfrac{f_{1,1}^{(n)}}{r_0 \cdots r_n} & \ldots & \dfrac{f_{1, k-1}^{(n)}}{r_0 \cdots r_n} & 1\\
\vdots & \ddots & \vdots & \vdots\\
\dfrac{f_{k,1}^{(n)}}{r_0 \cdots r_n} & \ldots & \dfrac{f_{k, k-1}^{(n)}}{r_0 \cdots r_n} & 1
\end{pmatrix}\right|.
$$
Therefore, we obtain that
$$
\frac{{\rm vol}_{k-1}[G_{n+m} (\Delta^{(n+m+1)})]}{{\rm vol}_{k-1}[\Delta^{(n+m)}]} = \frac{{\rm vol}_{k-1}[(\Delta^{(n+m)}_1)]}{{\rm vol}_{k-1}[\Delta^{(n+m)}]} = \left|\det
\begin{pmatrix}
\dfrac{f_{1,1}^{(n+m)}}{r_{n+m}} & \ldots & \dfrac{f_{1,{k-1}}^{(n+m)}}{r_{n+m}} & 1\\
\vdots & \ddots & \vdots & \vdots\\
\dfrac{f_{{k},1}^{(n+m)}}{r_{n+m}} & \ldots & \dfrac{f_{{k},{k-1}}^{(n+m)}}{r_{n+m}} & 1
\end{pmatrix}\right|.
$$
for $m = 0,1, \ldots$ Finally, the following formula is deduced from the above relations.
$$
\frac{{\rm vol}_{k-1}[(\Delta^{(n)}_m)]}{{\rm vol}_{k-1}[\Delta^{(n)}]} = \prod_{s = n}^{n + m} \left|\det
\begin{pmatrix}
 \dfrac{f_{1,1}^{(s)}}{r_{s}} & \ldots & \dfrac{f_{1,{k-1}}^{(s)}}{r_{s}} & 1\\
\vdots & \ddots & \vdots & \vdots\\
\dfrac{f_{{k},1}^{(s)}}{r_{s}} & \ldots & \dfrac{f_{{k},{k-1}}^{(s)}}{r_{s}} & 1
\end{pmatrix}\right| =
\prod_{s = n}^{n + m} |z^{(s)}|.
$$
Thereby, simplex $\Delta^{(n)}_{\infty}$ has $k$ vertices if and only if $\prod_{s = n}^{\infty} |z^{(s)}| > 0$. It follows that there exist exactly $k$ finite ergodic invariant measures on the Bratteli diagram $B$ if and only if $\prod_{n = 1}^{\infty} |z^{(n)}| > 0$ or, equivalently, $\sum_{n = 1}^{\infty}(1 - |z^{(n)}|) < \infty$.
\end{proof}

We now consider the case when $B$ is a Bratteli diagram of rank $k \geq 2$ and its incidence matrices $F_n = (f_{v,w}^{(n)})$ have $\rank\ F_n = 2$ for all $n= 1, 2, \ldots$

\begin{thm}
Let $B = (V,E)$ be a Bratteli diagram of rank $k \geq 2$; identify $V_n$ with $\{1, ... ,k\}$ for any $n \geq 1$. Let $F_n = (f_{i,j}^{(n)})$ form a sequence of  incidence matrices of $B$ such that
$\sum_{j \in V_n} f_{i,j}^{(n)} = r_n \geq 2$ for every $i \in V_{n+1}$. Suppose that  $\rank\ F_n = 2$ for all $n$.
Denote
$$
\Delta^{(n)} := \{(x_1, \ldots, x_k)^T : \sum_{i = 1}^k x_i = \frac{1}{r_0 \cdots r_{n-1}}
\ \mbox{ and } \ x_i   \geq 0, \  1 \leq i \leq k \}.
$$

Then there are two invariant measures if and only if
$$
\prod_{n=1}^{\infty} \frac{d(F_n^{T}F_{n+1}^{T}(\Delta^{(n+2)}))}{d(F_{n+1}^{T}(\Delta^{(n+2)}))} > 0
$$
or, equivalently,
$$
\sum_{n=1}^{\infty}\left(1 - \frac{d(F_n^TF_{n+1}^{T}(\Delta^{(n+2)}))}{d(F_{n+1}^{T}(\Delta^{(n+2)}))}\right) < \infty.
$$
The diagram $B = (V,E)$ has a unique invariant measure if and only if
$$
\sum_{n=1}^{\infty}\left(1 - \frac{d(F_n^T F_{n+1}^{T}(\Delta^{(n+2)}))}{d(F_{n+1}^{T}(\Delta^{(n+2)}))}\right) = \infty.
$$
\end{thm}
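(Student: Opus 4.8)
The plan is to run the same geometric scheme as in the preceding theorem, but with the $(k-1)$-dimensional volume $\mathrm{vol}_{k-1}$ replaced by the one-dimensional size $d(\cdot)$, because the hypothesis $\rank F_n=2$ forces all the iterated images to collapse onto line segments. Recall from the proof of the preceding theorem that probability invariant measures on $B$ correspond bijectively to the points of the nested intersection $\Delta_\infty^{(1)}=\bigcap_{m\ge 1}\Delta_m^{(1)}$, where $\Delta_m^{(n)}=G_n\circ\cdots\circ G_{n+m-1}(\Delta^{(n+m)})$ with $G_n=F_n^{T}$, and that ergodic measures correspond to the extreme points of $\Delta_\infty^{(1)}$. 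Thus the whole statement reduces to deciding whether $\Delta_\infty^{(1)}$ is a nondegenerate segment (two ergodic measures) or a single point (one measure), and this dichotomy is governed precisely by whether $d(\Delta_\infty^{(1)})>0$.

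First I would show that $\rank F_n=2$ makes each one-step image $\Delta_1^{(n)}=G_n(\Delta^{(n+1)})$ a line segment. Indeed, $\mathrm{Im}(F_n^{T})$ is a two-dimensional subspace, and since the $ERS$ property gives $F_n\mathbf 1=r_n\mathbf 1$, every column of $F_n^{T}$ has coordinate sum $r_n\ne 0$; hence this plane is transverse to the zero-sum hyperplane and meets the affine hyperplane $\{\sum_i x_i=1/(r_0\cdots r_{n-1})\}$ in an affine line $L_n$. Therefore $\Delta_1^{(n)}\subset L_n$ is a segment, and because $\Delta_m^{(n)}\subset\Delta_1^{(n)}$ for all $m\ge 1$, every $\Delta_m^{(n)}$ is a nested subsegment of the fixed line $L_n$. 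Consequently $\Delta_\infty^{(1)}$ is an interval, it has at most two extreme points, and $B$ carries at most two ergodic measures, recovering the known bound in this rank-two situation.

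The core step is a telescoping formula for $d(\Delta_m^{(1)})$. Since $G_n$ is linear and carries the line $L_{n+1}$, which contains every segment $\Delta_j^{(n+1)}$ with $j\ge 1$, affinely onto a segment of $L_n$, it scales lengths along $L_{n+1}$ by a single constant. This lets one shift the consecutive ratio $d(\Delta_{j+1}^{(1)})/d(\Delta_j^{(1)})$ down the diagram, level by level, until it becomes a purely local ratio of the two-fold image $F_j^{T}F_{j+1}^{T}(\Delta^{(j+2)})$ to a one-fold image, i.e.\ exactly the factor displayed in the statement. Telescoping and passing to the limit then gives
$$
d(\Delta_\infty^{(1)})=\lim_{m\to\infty} d(\Delta_m^{(1)})=d(\Delta_1^{(1)})\prod_{n=1}^{\infty}\frac{d\big(F_n^{T}F_{n+1}^{T}(\Delta^{(n+2)})\big)}{d\big(F_{n+1}^{T}(\Delta^{(n+2)})\big)}.
$$
As $d(\Delta_1^{(1)})\in(0,\infty)$ and each factor lies in $(0,1]$, the limit is positive exactly when the infinite product is positive, and the elementary equivalence $\prod(1-a_n)>0\Leftrightarrow\sum a_n<\infty$ for $a_n\in[0,1)$ converts this into the stated series criterion. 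The complementary case, divergence of the series, forces $d(\Delta_\infty^{(1)})=0$, so $\Delta_\infty^{(1)}$ degenerates to a point and $B$ has a unique invariant measure.

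I expect the main obstacle to be the bookkeeping in the telescoping identity of the third paragraph. One must verify that, although the simplices $\Delta^{(n)}$ are $(k-1)$-dimensional, the maps $G_n$ already act one-dimensionally on the stabilized nested segments, so that the consecutive length-ratios genuinely stabilize to the local factor, and one must track the rescaling of successive hyperplanes by the row sums $r_n$ so that these normalization factors cancel correctly in the product. Checking additionally that $\Delta_\infty^{(1)}$ acquires no extreme points off the line $L_1$ is where the argument needs the most care. A good consistency check is the case $k=2$, where $L_n$ is the zero-sum line and $F_n^{T}$ acts on it by its second eigenvalue $a_n-d_n$; the scheme must then reproduce Proposition~\ref{rank2example}.
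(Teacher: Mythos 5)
Your proposal follows exactly the paper's route: pass to $G_n=F_n^{T}$, note that $\rank F_n=2$ forces each $\Delta^{(n)}_1=G_n(\Delta^{(n+1)})$ to be a segment so that the nested intersection $\Delta^{(1)}_\infty$ is a point or an interval, and telescope the consecutive length ratios $d(\Delta^{(1)}_{j+1})/d(\Delta^{(1)}_j)$ by pushing each one down to a local ratio at level $j$. The one step that needs correcting is your identification of that local factor. Undoing $G_1\circ\cdots\circ G_{j-1}$ is legitimate because $G_jG_{j+1}(\Delta^{(j+2)})$ and $G_j(\Delta^{(j+1)})$ are collinear segments, one contained in the other, so the ratio of their lengths is preserved; but what this produces is
$$
\frac{d\bigl(F_j^{T}F_{j+1}^{T}(\Delta^{(j+2)})\bigr)}{d\bigl(F_j^{T}(\Delta^{(j+1)})\bigr)},
$$
with the \emph{one-fold image at level $j$} in the denominator --- and this is indeed the factor the paper's own proof ends with, written there as $d(G_n(\Delta_1^{(n+1)}))/d(\Delta_1^{(n)})$. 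The denominator $d(F_{n+1}^{T}(\Delta^{(n+2)}))$ displayed in the statement, which you reproduce in your product formula and declare to be ``exactly the factor the telescoping yields,'' is a different quantity: that ratio is the expansion factor of $F_n^{T}$ along the line containing $\Delta_1^{(n+1)}$, and its product does not control $d(\Delta^{(1)}_\infty)$. Your own proposed consistency check exposes this: for $k=2$ with the ERS property, $(1,-1)^{T}$ is an eigenvector of $F_n^{T}$ with eigenvalue $a_n-d_n$, so the statement's factor evaluates to $|a_n-d_n|$, whose product is positive whenever $a_n\neq d_n$, while the correct factor evaluates to $|a_{n+1}-d_{n+1}|/r_{n+1}$, which is what Proposition~\ref{rank2example} requires. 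So your argument is sound and coincides with the paper's once the denominator in the final product is replaced by $d(F_n^{T}(\Delta^{(n+1)}))$; the denominator as printed in the statement appears to be a typo that you have inherited rather than caught.
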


\begin{proof}
Since rank $F_n = 2$ for every $n$, the set $\Delta^{(n)}_1 = G_{n}(\Delta^{(n+1)})$ is an interval in $\Delta^{(n)}$ and the mappings
$$
\Delta^{(1)}_1 \stackrel{G_1}{\longleftarrow} \Delta^{(2)}_1 \stackrel{G_2}{\longleftarrow} \Delta^{(3)}_1 \stackrel{G_3}{\longleftarrow} \ldots
$$
are one-to-one. We denote by $d_{m+1}^{(n)}$  the length of the interval $\Delta_{m+1}^{(n)} = G_{n} \circ \ldots \circ G_{n+m-1} (\Delta^{(n+m)}_{1})$. Then $d_{m+1}^{(n)} \geq d_{m+2}^{(n)}$ for every $m$ and the intersection $\Delta_{\infty}^{(n)} = \bigcap_{m=0}^\infty \Delta_{m+1}^{(n)}$ is a point if and only if $\lim_{m \rightarrow \infty} d_{m+1}^{(n)} = 0$.  The set  $\Delta_{\infty}^{(n)}$ is an interval if and only if $\lim_{m \rightarrow \infty} d_{m+1}^{(n)} > 0$. In this case we denote  $d(\Delta_{\infty}^{(n)}) = \lim_{m \rightarrow \infty} d_{m+1}^{(n)}$. Then
\begin{eqnarray*}
\frac{d(\Delta_{m+1}^{(n)})}{d(\Delta_{1}^{(n)})} &=& \frac{d(G_n \circ \ldots \circ G_{n+m-1}(\Delta_1^{(n+m)}))}{d(G_n \circ \ldots \circ G_{n+m-2}(\Delta_1^{(n+m-1)}))} \cdot \ldots \cdot \frac{d(G_n(\Delta_1^{(n+1)})}{d(\Delta_1^{(n)})} \\
& = & \frac{d(G_{n+m-1}(\Delta_1^{(n+m)}))}{d(\Delta_1^{(n+m-1)})} \cdot \ldots \cdot \frac{d(G_n(\Delta_1^{(n+1)})}{d(\Delta_1^{(n)})}.
\end{eqnarray*}
Therefore, the diagram $B = (V,E)$ has two invariant measures if and only if
$$
\prod_{n=1}^{\infty} \frac{d(G_n(\Delta_1^{(n+1)}))}{d(\Delta_1^{(n)})} > 0
$$
or, equivalently,
$$
\sum_{n=1}^{\infty}\left(1 - \frac{d(G_n(\Delta_1^{(n+1)}))}{d(\Delta_1^{(n)})}\right) < \infty.
$$
The diagram $B = (V,E)$ has a unique invariant measure if and only if
$$
\sum_{n=1}^{\infty}\left(1 - \frac{d(G_n(\Delta_1^{(n+1)}))}{d(\Delta_1^{(n)})}\right) = \infty.
$$
\end{proof}

We remark that, in the above example, it suffices  to require $\rank\ F_n = 2$ for infinitely many $n$ and then use the procedure of telescoping.

%%%%%%%%%%%%%%%%%%%%
%%%%%%%%%%%
\section{Measure of the path space of a subdiagram}\label{subdiagrams}
%%%%%%%%%%%%%%%%%%%%%%%

Let $B'$ be a subdiagram of a Bratteli diagram $B$. Suppose that a probability measure $\mu$ is defined  on $B$. In this section, we  answer the question when the path space $X_{B'}$ of the subdiagram $B'$ considered as a subset of $X_B$  has positive measure $\mu$. Both cases of vertex and edge subdiagrams will be considered.

Let $B$ be a Bratteli diagram with incidence matrices $\{F_n\}_{n=0}^\infty$, and let $\ov B$ be a vertex subdiagram of $B$ defined by a sequence of subsets $W_n \subset V_n$, the support of  $\ov B$. Let $\mu$ be a probability measure on $X_B$. Denote by $Y_w^{(n)}$ the set of all paths $x = (x_1, \ldots ,x_n, \ldots )$ from $X_B$ which pass through vertex $w \in W_n$ and such that the finite path $(x_1, \ldots ,x_n)$ lies in $\ov B$. We set $Y_{\ov B}^{(n)} = \bigcup_{w \in W_n} Y_w^{(n)}$. Then, obviously, $Y^{(n)}_{\ov B} \supset Y^{(n+1)}_{\ov B}$ for all $n$ and the path space $X_{\ov B}$ of $\ol B$ satisfies the relation:
\begin{equation}\label{path space measure}
 X_{\ov B} =\bigcap_{n=1}^\infty Y^{(n)}_{\ov B}.
\end{equation}

\begin{thm}\label{mu_vertex_sbd}
Let $B$, $\ov B$, $\mu$, $Y^{(n)}_{\ov B}$ be as above. Then the series
$$
S = \sum_{n=1}^{\infty} \sum_{v \in W'_{n+1}} \sum_{w \in W_{n}} f_{v,w}^{(n)} p_v^{(n+1)}\ov h_{w}^{(n)}
$$
is always convergent and  $\mu(X_{\ov B}) = \mu(Y^{(1)}_{\ov B}) - S$. Hence,
$$
(\mu(X_{\overline{B}}) = 0) \   \  \Longleftrightarrow  \  \  (S = \mu(Y^{(1)}_{\ov B})).
$$
\end{thm}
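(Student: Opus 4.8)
The plan is to exploit the monotone structure \eqref{path space measure} together with continuity of the finite measure $\mu$ from above. First I would compute $\mu(Y^{(n)}_{\ov B})$ explicitly. A path lies in $Y^{(n)}_{\ov B}$ precisely when its initial segment $(x_1, \ldots, x_n)$ is a finite path of $\ov B$ ending at some $w \in W_n$; there are exactly $\ov h^{(n)}_w$ such finite paths to a fixed $w$, and each determines a cylinder set whose $\mu$-measure is $p^{(n)}_w$ (the value of $\mu$ on any cylinder ending at $w$, computed in the ambient diagram $B$). Since these cylinders are pairwise disjoint, this yields
\begin{equation*}
\mu(Y^{(n)}_{\ov B}) = \sum_{w \in W_n} \ov h^{(n)}_w\, p^{(n)}_w.
\end{equation*}
Because $Y^{(n)}_{\ov B} \supset Y^{(n+1)}_{\ov B}$ and $\mu$ is finite, continuity from above gives $\mu(X_{\ov B}) = \lim_{n} \mu(Y^{(n)}_{\ov B})$, so that
\begin{equation*}
\mu(Y^{(1)}_{\ov B}) - \mu(X_{\ov B}) = \sum_{n=1}^{\infty}\bigl(\mu(Y^{(n)}_{\ov B}) - \mu(Y^{(n+1)}_{\ov B})\bigr),
\end{equation*}
a telescoping series of non-negative terms. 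Identifying its general term with $S_n$, the $n$-th summand of $S$, is exactly what is needed.

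The heart of the argument is therefore computing the increment $\mu(Y^{(n)}_{\ov B}) - \mu(Y^{(n+1)}_{\ov B})$. I would do this by describing $Y^{(n)}_{\ov B} \setminus Y^{(n+1)}_{\ov B}$ directly: it consists of those paths staying in $\ov B$ up through level $n$ whose edge $x_{n+1}$ leaves $\ov B$. Here the vertex-subdiagram hypothesis is crucial: since $\ov B$ retains \emph{every} edge of $B$ between a vertex of $W_n$ and a vertex of $W_{n+1}$, a path passing through $w \in W_n$ leaves $\ov B$ at level $n+1$ if and only if the range $v = r(x_{n+1})$ lies in $W'_{n+1}$. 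Counting: for each $w \in W_n$ there are $\ov h^{(n)}_w$ initial segments in $\ov B$, followed by $f^{(n)}_{v,w}$ edges from $w$ to a given $v \in W'_{n+1}$, each extending to a cylinder of $\mu$-measure $p^{(n+1)}_v$. Summing gives
\begin{equation*}
\mu(Y^{(n)}_{\ov B}) - \mu(Y^{(n+1)}_{\ov B}) = \sum_{v \in W'_{n+1}} \sum_{w \in W_n} f^{(n)}_{v,w}\, p^{(n+1)}_v\, \ov h^{(n)}_w = S_n.
\end{equation*}
Alternatively, the same identity drops out algebraically from the measure recursion $p^{(n)}_w = \sum_{v \in V_{n+1}} f^{(n)}_{v,w} p^{(n+1)}_v$ of \eqref{formula for measures} applied to the formula above for $\mu(Y^{(n)}_{\ov B})$, upon splitting $V_{n+1} = W_{n+1} \cup W'_{n+1}$ and recognizing the $W_{n+1}$-part as $\mu(Y^{(n+1)}_{\ov B})$ via the height recursion $\ov h^{(n+1)}_v = \sum_{w \in W_n} f^{(n)}_{v,w} \ov h^{(n)}_w$.

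Substituting back, the telescoping series is $\sum_{n} S_n = S$, and since its partial sums equal $\mu(Y^{(1)}_{\ov B}) - \mu(Y^{(n+1)}_{\ov B}) \leq \mu(Y^{(1)}_{\ov B}) \leq 1$, the series $S$ converges, with $\mu(X_{\ov B}) = \mu(Y^{(1)}_{\ov B}) - S$. The stated equivalence then follows at once from $\mu(X_{\ov B}) \geq 0$. I expect the only genuinely delicate point to be the bookkeeping in identifying the increment with $S_n$ — specifically the appeal to the vertex-subdiagram property guaranteeing that escape from $\ov B$ at level $n+1$ is detected solely by the range vertex landing in $W'_{n+1}$; everything else is routine continuity of measure and rearrangement of finite sums.
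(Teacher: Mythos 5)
Your proposal is correct and follows essentially the same route as the paper: both arguments telescope the decreasing sequence $\mu(Y^{(n)}_{\ov B})$ and identify the increment $\mu(Y^{(n)}_{\ov B}) - \mu(Y^{(n+1)}_{\ov B})$ with the $n$-th term of $S$, the "alternative" algebraic derivation you sketch (via the recursions $p_w^{(n)} = \sum_v f_{v,w}^{(n)} p_v^{(n+1)}$ and $\ov h_v^{(n+1)} = \sum_w f_{v,w}^{(n)} \ov h_w^{(n)}$) being precisely the paper's computation. Your primary, more combinatorial identification of the increment by measuring $Y^{(n)}_{\ov B} \setminus Y^{(n+1)}_{\ov B}$ directly is a valid cosmetic variant of the same step.
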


\begin{proof} By definition, we see that $\mu (Y^{(n)}_{\ov B}) = \sum_{w \in W_{n}} \ov h_w^{(n)} p_w^{(n)}$. In what follows we use the equality  $\sum_{w \in W_n} f_{v,w}^{(n)} \ov h_{w}^{(n)} = \ov h_v^{(n+1)}$, where $v \in V_{n+1}$.
\begin{eqnarray*}
\mu (Y^{(n + 1)}_{\ov B}) & = & \sum_{v \in W_{n+1}} \ov h_v^{(n+1)} p_v^{(n+1)} \\
&=& \sum_{v \in W_{n+1}} p_v^{(n+1)} \sum_{w \in W_n} f_{v,w}^{(n)} \ov h_{w}^{(n)}\\
 &=& \sum_{w \in W_n} \ov h_{w}^{(n)} \left( \sum_{v \in W_{n+1}} f_{v,w}^{(n)} p_v^{(n+1)} + \sum_{v \in W'_{n+1}} f_{v,w}^{(n)} p_v^{(n+1)} - \sum_{v \in W'_{n+1}} f_{v,w}^{(n)} p_v^{(n+1)}\right)\\
  &=& \sum_{w \in W_n} \ov h_{w}^{(n)}  \left(p_w^{(n)} - \sum_{v \in W'_{n+1}} f_{v,w}^{(n)} p_v^{(n+1)}\right)  \\
  &=&  \mu (Y^{(n)}_{\ov B}) - \sum_{v \in W'_{n+1}} p_v^{(n+1)} \sum_{w \in W_n} \ov h_{w}^{(n)} f_{v,w}^{(n)}.
\end{eqnarray*}
Because $\mu(X_B) = 1$, we have $\mu(Y^{(1)}_{\ov B}) \leq 1$. It follows from (\ref{path space measure}) that
\begin{eqnarray*}
 \mu(X_{\ov B}) &=& \lim_{n \rightarrow \infty} \mu(Y^{(n)}_{\ov B}) \\
&=& \mu(Y^{(1)}_{\ov B}) + \sum_{n=1}^\infty (\mu(Y^{(n+1)}_{\ov B}) - \mu(Y^{(n)}_{\ov B})).
\end{eqnarray*}
 We finally obtain
\begin{eqnarray*}
\mu(Y^{(1)}_{\ov B}) &=& \mu(X_{\ov B}) + \sum_{n=1}^\infty (\mu(Y^{(n)}_{\ov B}) - \mu(Y^{(n+1)}_{\ov B})) \\
 &=& \mu(X_{\ov B}) + S.
\end{eqnarray*}
 This relation proves that the series $S$ always converges and the theorem holds.
\end{proof}

The following result  is an analogue of Theorem \ref{mu_vertex_sbd} for an edge subdiagram.

\begin{thm}
Let $B$ be a Bratteli diagram with incidence matrices $\{F_n\}_{n=0}^\infty$, and let $\mu$ be a probability measure on $X_B$. For an edge subdiagram   $\ov B \subset B$,  the series
$$
\widetilde{S} = \sum_{n=1}^{\infty} \sum_{v \in V_{n+1}} \sum_{w \in V_{n}} \tl f_{v,w}^{(n)} p_v^{(n+1)}\ov h_{w}^{(n)}
$$
is always convergent and $\mu(X_{\ov B}) = \mu(Y^{(1)}_{\ov B}) - \widetilde{S}$. Hence,
$$
(\mu(X_{\overline{B}}) = 0)  \  \  \Longleftrightarrow  \  \  (\widetilde{S} = \mu(Y^{(1)}_{\ov B})).
$$
\end{thm}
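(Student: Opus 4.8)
The plan is to reproduce, mutatis mutandis, the telescoping argument of Theorem~\ref{mu_vertex_sbd}, with the vertex supports $W_n$ replaced everywhere by the full level sets $V_n$ (recall that an edge subdiagram keeps all vertices) and the removed cross-edges replaced by the deleted-edge matrix $\tl f_{v,w}^{(n)}$. The starting identity is
$$
\mu(Y^{(n)}_{\ov B}) = \sum_{w \in V_n} \ov h_w^{(n)}\, p_w^{(n)},
$$
which holds because each path counted by $Y^{(n)}_{\ov B}$ has its initial segment of length $n$ inside $\ov B$, and the $\ov h_w^{(n)}$ such finite $\ov B$-paths ending at $w$ each cut out a cylinder of $\mu$-measure $p_w^{(n)}$. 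The one structural fact I would record first is the edge-subdiagram height recursion $\ov h_v^{(n+1)} = \sum_{w \in V_n} \ov f_{v,w}^{(n)}\, \ov h_w^{(n)}$ for $v \in V_{n+1}$, obtained by sorting each finite $\ov B$-path to $v$ according to the vertex $w$ it occupies at level $n$ and the $\ov B$-edge it uses from $w$ to $v$.

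Next I would substitute this recursion into $\mu(Y^{(n+1)}_{\ov B}) = \sum_{v \in V_{n+1}} \ov h_v^{(n+1)} p_v^{(n+1)}$, write $\ov f = f - \tl f$, and interchange the (finite) sums:
\begin{align*}
\mu(Y^{(n+1)}_{\ov B})
&= \sum_{v \in V_{n+1}} p_v^{(n+1)} \sum_{w \in V_n} \ov f_{v,w}^{(n)}\, \ov h_w^{(n)}\\
&= \sum_{w \in V_n} \ov h_w^{(n)} \sum_{v \in V_{n+1}} f_{v,w}^{(n)}\, p_v^{(n+1)}
   - \sum_{v \in V_{n+1}} p_v^{(n+1)} \sum_{w \in V_n} \tl f_{v,w}^{(n)}\, \ov h_w^{(n)}.
\end{align*}
The crucial simplification is the invariance relation (\ref{formula for measures}), i.e. $F_n^T p^{(n+1)} = p^{(n)}$, which gives $\sum_{v \in V_{n+1}} f_{v,w}^{(n)} p_v^{(n+1)} = p_w^{(n)}$; the first double sum then collapses to $\sum_{w \in V_n} \ov h_w^{(n)} p_w^{(n)} = \mu(Y^{(n)}_{\ov B})$. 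This produces the one-step recursion
$$
\mu(Y^{(n+1)}_{\ov B}) = \mu(Y^{(n)}_{\ov B}) - \sum_{v \in V_{n+1}} \sum_{w \in V_n} \tl f_{v,w}^{(n)}\, p_v^{(n+1)}\, \ov h_w^{(n)},
$$
whose subtracted term is exactly the $n$-th summand of $\tl S$.

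Finally I would sum this recursion from $n=1$ to $N$, obtaining $\mu(Y^{(1)}_{\ov B}) - \mu(Y^{(N+1)}_{\ov B}) = \sum_{n=1}^{N} \sum_{v,w} \tl f_{v,w}^{(n)} p_v^{(n+1)} \ov h_w^{(n)}$, and pass to the limit using (\ref{path space measure}) together with the monotonicity $Y^{(n)}_{\ov B} \supset Y^{(n+1)}_{\ov B}$, so that $\mu(X_{\ov B}) = \lim_n \mu(Y^{(n)}_{\ov B})$. Since every term of $\tl S$ is non-negative and the partial sums equal $\mu(Y^{(1)}_{\ov B}) - \mu(Y^{(N+1)}_{\ov B}) \le \mu(Y^{(1)}_{\ov B}) \le 1$, the series $\tl S$ is increasing and bounded, hence convergent, with $\tl S = \mu(Y^{(1)}_{\ov B}) - \mu(X_{\ov B})$; rearranging gives $\mu(X_{\ov B}) = \mu(Y^{(1)}_{\ov B}) - \tl S$ and the stated equivalence at once. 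I expect no hard analytic step here: the whole argument is a bookkeeping exercise, and the only point needing genuine care is the conversion $\ov f = f - \tl f$ followed by the application of (\ref{formula for measures}) to recognize the leading sum as $\mu(Y^{(n)}_{\ov B})$. Convergence of $\tl S$ is then forced purely by monotone boundedness rather than by any estimate, which is precisely why it holds for every probability measure $\mu$ and every edge subdiagram.
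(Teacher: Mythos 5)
Your proposal is correct and follows essentially the same route as the paper: the same starting identity $\mu(Y^{(n)}_{\ov B})=\sum_{w\in V_n}\ov h_w^{(n)}p_w^{(n)}$, the same height recursion, the same split $\ov f=f-\tl f$ combined with $F_n^Tp^{(n+1)}=p^{(n)}$ to get the one-step recursion, and the same telescoping/monotone-boundedness conclusion. No gaps.
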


\begin{proof} The proof is similar to that of  Theorem~\ref{mu_vertex_sbd}. Namely, we have
\begin{eqnarray*}%bad box
\mu (Y^{(n + 1)}_{\ov B}) & = & \sum_{v \in V_{n+1}} \ov h_v^{(n+1)} p_v^{(n+1)} \\
&=& \sum_{v \in V_{n+1}} p_v^{(n+1)} \sum_{w \in V_n} \overline{f}_{v,w}^{(n)} \ov h_{w}^{(n)}\\
 &=& \sum_{w \in V_n} \ov h_{w}^{(n)} \left( \sum_{v \in V_{n+1}} \overline{f}_{v,w}^{(n)} p_v^{(n+1)} + \sum_{v \in V'_{n+1}} \widetilde{f}_{v,w}^{(n)} p_v^{(n+1)} - \sum_{v \in V'_{n+1}} \widetilde{f}_{v,w}^{(n)} p_v^{(n+1)}\right)\\
  &=& \sum_{w \in V_n} \ov h_{w}^{(n)}  \left(p_w^{(n)} - \sum_{v \in V_{n+1}} \widetilde{f}_{v,w}^{(n)} p_v^{(n+1)}\right)  \\
  &=&  \mu (Y^{(n)}_{\ov B}) - \sum_{v \in V_{n+1}} p_v^{(n+1)} \sum_{w \in V_n} \ov h_{w}^{(n)} \widetilde{f}_{v,w}^{(n)}.
\end{eqnarray*}
Then, we use the same method as in Theorem~\ref{mu_vertex_sbd} to finish the proof.
\end{proof}

\medskip
In the next proposition we consider the case of a stationary Bratteli diagram $B$.
It is worth noting that this proposition is not true in the case of arbitrary diagrams (see Remark \ref{propnottrue} below).

\begin{prop}\label{stationary}
Let $B$ be a stationary Bratteli diagram with  irreducible incidence matrix $F$ and $A = F^T$. Let $\ov B$ be a proper stationary edge subdiagram of $B$. Denote by $\ov F$ the incidence matrix of $\ov B$ and set $\ov A = \ov F^T$.  Let $\mu$ be the unique probability measure on $B$ invariant with respect to the tail equivalence relation $\mathcal E$. Then $\mu(X_{\ov B}) = 0$.
\end{prop}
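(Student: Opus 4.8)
The plan is to use Perron--Frobenius theory to play the growth of the subdiagram heights against the decay of the measure vector. Since $F$ is irreducible, so is $A = F^T$, and the two share their Perron eigenvalue $\lambda = \rho(F) = \rho(A)$. For an irreducible stationary diagram the unique probability $\mathcal E$-invariant measure is the Perron measure (see \cite{BKMS_2010}): if $Ay = \lambda y$ with $y > 0$, then the relation $F_n^T p^{(n+1)} = p^{(n)}$ together with nonnegativity forces $p^{(n)} = \lambda^{-(n-1)} p^{(1)}$, where $p^{(1)}$ is the positive multiple of $y$ fixed by the normalization $\sum_{v} h_v^{(n)} p_v^{(n)} = 1$. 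I would begin by recording this explicit form of $p^{(n)}$.

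Next I would reduce $\mu(X_{\ov B})$ to an estimable limit. As in the proof of Theorem~\ref{mu_vertex_sbd} (in its edge-subdiagram analogue), relation (\ref{path space measure}) gives
$$
\mu(X_{\ov B}) = \lim_{n\to\infty} \mu(Y^{(n)}_{\ov B}), \qquad \mu(Y^{(n)}_{\ov B}) = \sum_{w\in V_n} \ov h_w^{(n)} p_w^{(n)} = \langle \ov h^{(n)}, p^{(n)}\rangle .
$$
Using the stationary recursion $\ov h^{(n)} = \ov F^{\,n-1}\ov h^{(1)}$, the explicit form of $p^{(n)}$, and the identity $\langle Mx, y\rangle = \langle x, M^T y\rangle$, this becomes
$$
\mu(Y^{(n)}_{\ov B}) = \lambda^{-(n-1)}\,\langle \ov F^{\,n-1}\ov h^{(1)},\, p^{(1)}\rangle = \lambda^{-(n-1)}\,\langle \ov h^{(1)},\, \ov A^{\,n-1} p^{(1)}\rangle,
$$
where $\ov A = \ov F^T$.

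The crucial step is a strict spectral inequality. Because $\ov B$ is an edge subdiagram we have $\ov F \le F$ and, by the defining property of edge subdiagrams, $\ov f_{vw} > 0 \iff f_{vw} > 0$; since $\ov B$ is proper, $\ov F \ne F$. The Perron--Frobenius monotonicity theorem for irreducible nonnegative matrices then yields
$$
\ov\lambda := \rho(\ov F) = \rho(\ov A) < \rho(F) = \lambda .
$$
I regard correctly invoking the \emph{strict} inequality here, rather than the trivial $\rho(\ov F)\le\rho(F)$, as the main obstacle: it is exactly where irreducibility of $F$ and properness of $\ov B$ enter, and everything else is routine.

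Finally I would estimate. Fix $\varepsilon > 0$ with $\ov\lambda + \varepsilon < \lambda$. There is $C_\varepsilon > 0$ such that $\|\ov A^{\,m}\| \le C_\varepsilon(\ov\lambda+\varepsilon)^m$ for all $m$, so $\langle \ov h^{(1)}, \ov A^{\,n-1} p^{(1)}\rangle \le C(\ov\lambda+\varepsilon)^{n-1}$ for a constant $C$ depending only on $\ov h^{(1)}$ and $p^{(1)}$. Hence
$$
\mu(Y^{(n)}_{\ov B}) \le C\left(\frac{\ov\lambda+\varepsilon}{\lambda}\right)^{n-1} \longrightarrow 0 \quad (n\to\infty),
$$
which gives $\mu(X_{\ov B}) = 0$, as claimed.
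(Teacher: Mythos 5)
Your proof is correct and follows essentially the same route as the paper: both express the invariant measure through the Perron eigenvector of $A$, reduce $\mu(X_{\ov B})$ to $\lim_n \sum_w \ov h_w^{(n)} p_w^{(n)}$, and conclude via the strict Perron--Frobenius inequality $\ov\lambda < \lambda$ coming from irreducibility of $F$ and properness of $\ov B$. The only (minor) difference is that you bound $\langle \ov h^{(1)}, \ov A^{\,n-1} p^{(1)}\rangle$ by $\|\ov A^{\,n-1}\| \le C_\varepsilon(\ov\lambda+\varepsilon)^{n-1}$, which does not require irreducibility of $\ov A$, whereas the paper uses the asymptotic $\ov h_w^{(n)} \sim \ov\lambda^{\,n}$ and therefore checks that $\ov A$ is irreducible.
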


\begin{proof} It follows from the condition of the proposition that $\ov F < F$ and $\ov A < A$.  Let $\lambda$ be the Perron-Frobenius eigenvalue for $A$ with the corresponding eigenvector $x = (x_v)$. Denote by  $\ov \lambda$  the Perron-Frobenius eigenvalue for $\ol A$. Then $\ov \lambda < \lambda$ (see \cite{G}). If $\ol e$ is a finite path with $r(\ol e) = v \in V_n$, then  $\mu([\ol e]) = \dfrac{x_v}{\lambda^{n-1}}$, where $[\ol e]$ is the cylinder subset of $X_B$ defined by $\ol e$ (see, for instance, \cite{BKMS_2010}). Therefore, we have
\begin{eqnarray*}
 \mu(X_{\ov B})&=& \lim_{n \rightarrow \infty} \mu(Y^{(n)}_{\ov B})\\
   &=& \lim_{n \rightarrow \infty} \sum_{w \in V_n} \mu(Y^{(n)}_w)\\
& = & \lim_{n \rightarrow \infty} \sum_{w \in V_n} \ov h_w^{(n)} \frac{x_w}{\lambda^{n-1}}.
\end{eqnarray*}
We recall that an entry of $\ol A$ is positive if and only if the corresponding entry of $A$ is positive (we do not remove an edge to define $\ol B$ if it is the only one edge between a pair of vertices). Therefore, since $A$ is irreducible, we conclude that $\ov A$ is also irreducible. Then  $\ov h_w^{(n)} \sim \ov \lambda^n$ as $n \rightarrow \infty$ \cite{BKMS_2010}. Hence, there exists some $C>0$ such that $\dfrac{\ov h_w^{(n)}}{\ov \lambda^n} \rightarrow C$ as $n \rightarrow \infty$. Since $\sum_{w \in V_n} \dfrac{\ov \lambda^{n-1}}{\lambda^{n-1}} \ov \lambda x_w \rightarrow 0$ as $n \rightarrow \infty$, we obtain $\mu(X_{\ov B}) = 0$.
\end{proof}

The following theorem gives a necessary and sufficient condition for a  subdiagram $\ol B$ of $B$ to have a path space of zero measure in $X_B$.   Though the theorem is formulated for a vertex subdiagram, the statement remains  true also for any edge subdiagram $\ol B$ (see Remark \ref{edge subdiagram} below).

\begin{thm}\label{main}
Let $B$ be a simple Bratteli diagram, and let $\mu$ be any probability ergodic measure on $X_B$. Suppose that  $\ov B$ is a vertex subdiagram of $B$ defined by a sequence  $(W_n)$ of subsets of $V_n$. Then $\mu(X_{\ov B}) = 0$ if and only if
\begin{equation}\label{thin set condition}
\forall \varepsilon > 0 \ \exists n = n(\varepsilon) \ \mathrm{such\ that} \ \forall w \in W_n \ \mathrm{one\ has} \ \frac{\ov h_w^{(n)}}{h_w^{(n)}} < \varepsilon.
\end{equation}
\end{thm}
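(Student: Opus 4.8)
The plan is to reduce everything to the single sequence $M_n := \max_{w\in W_n}\dfrac{\ov h_w^{(n)}}{h_w^{(n)}}$ and to recognize condition~(\ref{thin set condition}) as the statement $\lim_n M_n=0$. First I would record, exactly as in the proof of Theorem~\ref{mu_vertex_sbd}, that $X_{\ov B}=\bigcap_n Y^{(n)}_{\ov B}$ is a decreasing intersection, so that
\begin{equation*}
\mu(X_{\ov B})=\lim_{n\to\infty}\mu(Y^{(n)}_{\ov B}),\qquad \mu(Y^{(n)}_{\ov B})=\sum_{w\in W_n}\frac{\ov h_w^{(n)}}{h_w^{(n)}}\,\mu(X_w^{(n)}),
\end{equation*}
where I abbreviate $g_w^{(n)}=\ov h_w^{(n)}/h_w^{(n)}\in(0,1]$ and use $\mu(X_w^{(n)})=h_w^{(n)}p_w^{(n)}$. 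Since $\ov B$ is a vertex subdiagram, every edge between two retained vertices survives, so $\ov h_v^{(n+1)}=\sum_{w\in W_n}f_{v,w}^{(n)}\ov h_w^{(n)}$ for $v\in W_{n+1}$, which rewrites as the subconvex combination $g_v^{(n+1)}=\sum_{w\in W_n}q_{v,w}^{(n)}g_w^{(n)}$. As $\sum_{w\in W_n}q_{v,w}^{(n)}\le 1$, this forces $M_{n+1}\le M_n$; hence $M_\infty:=\lim_n M_n$ exists, and (\ref{thin set condition}) holds precisely when $M_\infty=0$.

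The easy implication is then immediate and uses neither simplicity nor ergodicity: bounding each $g_w^{(n)}$ by $M_n$ gives $\mu(Y^{(n)}_{\ov B})\le M_n\sum_{w\in W_n}\mu(X_w^{(n)})\le M_n$, so $\mu(X_{\ov B})\le M_\infty$. In particular $M_\infty=0$ implies $\mu(X_{\ov B})=0$; this bound alone already yields the ``$\mu(X_{\ov B})=0\Rightarrow$ infinite extension'' statement of Corollary~\ref{corol_infty}.

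For the converse I would argue the contrapositive: assuming $M_\infty>0$, I must produce a uniform lower bound $\mu(Y^{(n)}_{\ov B})\ge c>0$, whence $\mu(X_{\ov B})\ge c$. Here simplicity enters: I would telescope $B$ so that every incidence matrix is strictly positive, which changes neither $X_{\ov B}\subset X_B$ nor the monotone quantity $M_\infty$. It is clarifying to read $g_w^{(n)}$ as a survival probability of the backward chain whose transition law from level $n+1$ to level $n$ is $q^{(n)}_{v,w}$, so that $\mu(Y^{(n)}_{\ov B})=\mathbb{P}_\mu(V_i\in W_i,\ 1\le i\le n)$ and $g_w^{(n)}$ is the chance of staying inside $\ov B$ on the way down from $w$. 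From $M_{n+1}\le M_n\cdot\max_{v\in W_{n+1}}\sum_{w\in W_n}q_{v,w}^{(n)}$ together with $\prod_n(M_{n+1}/M_n)=M_\infty/M_1>0$ one extracts the summability of the minimal escape, $\sum_n\min_{v\in W_{n+1}}\sum_{w\in W'_n}q_{v,w}^{(n)}<\infty$.

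The step I expect to be hardest is to promote this control of the \emph{minimal} escape into control of the \emph{$\mu$-averaged} escape $\sum_n L_n$, where $L_n=\sum_{v\in W'_{n+1}}\mu(X_v^{(n+1)})\sum_{w\in W_n}q_{v,w}^{(n)}g_w^{(n)}$ is exactly the one-step loss $\mu(Y^{(n)}_{\ov B})-\mu(Y^{(n+1)}_{\ov B})$ of Theorem~\ref{mu_vertex_sbd}. The obstruction is that a single tower $X_w^{(n)}$ may have vanishing measure, so one cannot bound the loss vertexwise; indeed, in Example~\ref{ex1} every tower has mass $1/|V_n|\to 0$ while still $\mu(X_{\ov B})=M_\infty>0$, precisely because there the high-survival vertices are numerous and collectively carry almost all the mass. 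Thus the crux is to show that, under strict positivity of the telescoped matrices, the $\mu$-mass cannot concentrate on the low-survival vertices, i.e. that $\sum_n\sum_{w\in W'_n}\mu(X_w^{(n)})$ (and hence $\sum_n L_n$) is so small that $\sum_n L_n<\mu(Y^{(1)}_{\ov B})$. For this I would invoke ergodicity of $\mu$, through the $0$--$1$ law $\mu(\wh X_{\ov B})\in\{0,1\}$ and a mass-transfer estimate across the fully connected consecutive levels, to exclude a persistent positive fraction of $\mu$ leaking into $W'_n$; establishing this transfer bound is where the real work lies. The edge-subdiagram version (Remark~\ref{edge subdiagram}) should then follow verbatim, replacing the off-$W$ entries by $\wt f_{v,w}^{(n)}$.
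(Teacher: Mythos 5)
Your reduction of condition (\ref{thin set condition}) to $\lim_n M_n=0$ via the identity $g_v^{(n+1)}=\sum_{w\in W_n}q_{v,w}^{(n)}g_w^{(n)}$ (where $g_w^{(n)}=\ov h_w^{(n)}/h_w^{(n)}$) is correct and a genuine clarification, and your ``if'' direction ($M_\infty=0\Rightarrow\mu(X_{\ov B})=0$) is essentially the paper's argument. But the ``only if'' direction is not proved. You correctly isolate the crux --- $M_\infty>0$ only guarantees one vertex per level with $g_w^{(n)}\ge\delta$, while $\mu(Y_{\ov B}^{(n)})=\sum_{w\in W_n} g_w^{(n)}\mu(X_w^{(n)})$ could still vanish if the $\mu$-mass concentrates on the low-survival vertices --- and then you defer exactly that point to an unspecified ``mass-transfer estimate,'' admitting that establishing it is where the real work lies. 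The auxiliary facts you do derive do not close this gap: the $0$--$1$ law concerns the invariant saturation $\wh X_{\ov B}$, not $X_{\ov B}$ itself, and the summability of the minimal escape $\sum_n\min_{v\in W_{n+1}}\sum_{w\in W_n'}q_{v,w}^{(n)}$ controls only the best vertex, not the $\mu$-average. So as it stands the hard implication is a plan, not a proof.

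For comparison, the paper closes this gap with a combinatorial lemma: if $\mu(X_{\ov B})=0$, then for every $m$ and $K>1$ there is $N$ with $|E(w,v)|\ge K\,\ov h_v^{(N)}$ for all $w\in W_m$ and $v\in W_N$. The mechanism is the ergodic theorem for a simple diagram, which gives $p_w^{(m)}=\lim_{N\to\infty}|E(w,v)|/h_v^{(N)}$ uniformly over $v\in V_N$; since $\mu(X_{\ov B})=0$ means $\sum_{u\in W_n}\ov h_u^{(n)}p_u^{(n)}\to 0$, one chooses $n$ with $p_w^{(m)}>K\sum_{u\in W_n}\ov h_u^{(n)}p_u^{(n)}$ for all $w\in W_m$ and converts this into path counts using
$\ov h_v^{(N)}=\sum_{u\in W_n}\ov h_u^{(n)}|\ov E(u,v)|\le\sum_{u\in W_n}\ov h_u^{(n)}|E(u,v)|$.
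Combined with the trivial bound $|E(w,v)|\le h_v^{(N)}$, this yields $\ov h_v^{(N)}/h_v^{(N)}<1/K$ for all $v\in W_N$, which is (\ref{thin set condition}). This identification of $p_w^{(m)}$ with normalized path counts is precisely where simplicity and ergodicity enter, and it is the ingredient that rules out the mass hiding on low-survival vertices. Until you supply an argument of comparable strength, your proof of the hard direction is incomplete.
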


 \begin{proof} We first prove the ``if'' part. Fix $\varepsilon > 0$ and find $n = n(\varepsilon)$ such that $\dfrac{\ov h_w^{(n)}}{h_w^{(n)}} < \varepsilon$ for every $w \in W_n$. We note that
$$
\sum_{w \in W_n} h_w^{(n)} p_w^{(n)} < \sum_{w \in V_n} h_w^{(n)} p_w^{(n)} = 1.
$$
Then we have
\begin{eqnarray*}
  \mu(X_{\ov B})  & \leq &\sum_{w \in W_n} \ov h_w^{(n)} p_w^{(n)} \\
   &=& \sum_{w \in W_n} \frac{\ov h_w^{(n)}}{h_w^{(n)}} h_w^{(n)} p_w^{(n)}\\
 &< &\varepsilon.
\end{eqnarray*}
Hence $\mu(X_{\ov B}) = 0$.

To prove the ``only if'' part, we will need the following lemma.

\begin{lemm}\label{lemma}
If $\mu(X_{\ov B}) = 0$, then for every $m > 1$ and every $K > 1$ there exists $N > m$ such that for every $w \in W_m$ and every $v \in W_N$ we have
$$
|E(w,v)| \geq K \ov h_v^{(N)}
$$
(recall that $E(w,v)$ is the set of finite paths between vertices $w$ and $v$).
\end{lemm}

{\em Proof of the lemma}. Since $\mu(X_{\ov B}) = 0$, we get $\sum_{w \in W_n} \ov h_w^{(n)} p_w^{(n)} \rightarrow 0$ as $n \rightarrow \infty$. Take any integers $m$ and  $K$. Then  $\{p_w^{(m)} : w \in W_m\}$ is a finite set of positive numbers. Hence there is $n$ such that $p_w^{(m)} > K \sum_{w \in W_n} \ov h_w^{(n)} p_w^{(n)}$ for every $w \in W_m$. By the ergodic theorem, we have
$$
p_w^{(m)} = \lim_{N \rightarrow \infty} \frac{|E(w,v)|}{h_v^{(N)}},\ \  \mathrm{and}\ \  \
p_u^{(n)} = \lim_{N \rightarrow \infty} \frac{|E(u,v)|}{h_v^{(N)}}
$$
for every $v \in V_N$ and every $u \in W_n$.
Thus, we can find $N$ such that for any $w \in W_m$ and any $v \in V_N$ the following inequality holds:
$$
\frac{|E(w,v)|}{h_v^{(N)}} > K \sum_{u \in W_n} \ov h_u^{(n)} \frac{|E(u,v)|}{h_v^{(N)}}.
$$
Therefore,
\begin{eqnarray*}
|E(w,v)| & > & K \sum_{u \in W_n} \ov h_u^{(n)} |E(u,v)|\\
 & > & K \sum_{u \in W_n} \ov h_u^{(n)} |\ov E(u,v)|\\
& = & K \ov h_v^{(N)},
\end{eqnarray*}
where  we denote by $\ov E(u,v)$  the set of all finite paths in $\ov B$ between the vertices $u$ and $v$. This proves the lemma.
\medskip

We continue now the {\em proof of the theorem}. Suppose that $\mu(X_{\ov B}) = 0$. Take $\varepsilon > 0$ and find $K$ such that $\dfrac{1}{K} < \varepsilon$. By Lemma \ref{lemma}, there is $N$ such that $K \ov h_v^{(N)} < |E(w,v)| < h_v^{(N)}$ for every $w \in W_m$ and for some $m < N$. Thus,
$$
\frac{\ov h_v^{(N)}}{h_v^{(N)}} < \frac{1}{K} < \varepsilon
$$ for every $v \in W_N$. This completes the proof.
\end{proof}

In fact, Theorem \ref{main} states that if  a subdiagram $\ol B$ satisfies  (\ref{thin set condition}), then $X_{\ol B}$ has measure zero with respect to every ergodic invariant measure, that is the set $X_{\ol B}$ is {\em thin} according to the definition from \cite{giordano_putnam_skau:2004}.

\begin{corol}
Let $B$ be a simple Bratteli diagram with  a probability ergodic measure  $\mu$ on $X_B$. Suppose that $\ov B$ is a vertex subdiagram of $B$ defined by a sequence of subsets  $W_n$ of $V_n$. Then

(1) $\mu(X_{\ov B}) > 0$ if and only if there exists $\delta > 0$ such that for all $n > 1$ there is $w_0 \in W_n$ such that $\dfrac{\ov h_{w_0}^{(n)}}{h_{w_0}^{(n)}} > \delta$.

(2) If $\mu(X_{\ov B}) = 0$, then for every $m > 1$ and every $k > 1$ there exists $N > m$ such that for every $w \in W_m$ and every $v \in W_N$ we have
$$
|E(w,v)| \geq K |\ov E(w,v)|.
$$
\end{corol}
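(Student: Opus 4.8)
The plan is to derive both statements directly from Theorem~\ref{main} and from Lemma~\ref{lemma} proved inside it, with essentially no new analytic input: part (1) is the logical negation of the characterization in Theorem~\ref{main}, and part (2) is Lemma~\ref{lemma} combined with one elementary path-counting inequality.

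For part (1), I would take the contrapositive of Theorem~\ref{main}. Since $\mu(X_{\ov B}) > 0$ is exactly the failure of $\mu(X_{\ov B}) = 0$, it is equivalent to the negation of condition (\ref{thin set condition}). Negating the nested quantifiers ``$\forall \varepsilon > 0\ \exists n\ \forall w \in W_n:\ \ov h_w^{(n)}/h_w^{(n)} < \varepsilon$'' produces ``$\exists \varepsilon > 0\ \forall n\ \exists w \in W_n:\ \ov h_w^{(n)}/h_w^{(n)} \geq \varepsilon$'', which is precisely the asserted statement once one sets $\delta = \varepsilon/2$ to pass from the non-strict to the strict inequality (and conversely takes $\varepsilon = \delta$ for the reverse implication). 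As a cleaner reformulation I would first observe that $M_n := \max_{w \in W_n} \ov h_w^{(n)}/h_w^{(n)}$ is non-increasing in $n$: for $v \in W_{n+1}$ one has $\ov h_v^{(n+1)} = \sum_{w \in W_n} f_{v,w}^{(n)} \ov h_w^{(n)} \le M_n \sum_{w \in W_n} f_{v,w}^{(n)} h_w^{(n)} \le M_n h_v^{(n+1)}$, using $h_v^{(n+1)} = \sum_{w \in V_n} f_{v,w}^{(n)} h_w^{(n)} \ge \sum_{w \in W_n} f_{v,w}^{(n)} h_w^{(n)}$, so $M_{n+1} \le M_n$. Thus $M_n$ decreases to a limit $M_\infty$, condition (\ref{thin set condition}) is equivalent to $M_\infty = 0$, and part (1) is just the statement $M_\infty > 0$ with explicit witness $\delta = M_\infty/2$.

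For part (2), I would apply Lemma~\ref{lemma} as it stands. Under $\mu(X_{\ov B}) = 0$, for the given $m$ and $K$ the lemma yields $N > m$ with $|E(w,v)| \geq K\, \ov h_v^{(N)}$ for all $w \in W_m$ and $v \in W_N$. It remains only to bound $\ov h_v^{(N)}$ below by $|\ov E(w,v)|$. Decomposing each path of $\ov B$ from $v_0$ to $v$ according to the vertex of $W_m$ it visits at level $m$ gives the identity $\ov h_v^{(N)} = \sum_{u \in W_m} \ov h_u^{(m)}\, |\ov E(u,v)|$; keeping only the term $u = w$ and using $\ov h_w^{(m)} \geq 1$ gives $\ov h_v^{(N)} \geq \ov h_w^{(m)}\, |\ov E(w,v)| \geq |\ov E(w,v)|$. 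Chaining the two inequalities produces $|E(w,v)| \geq K\, \ov h_v^{(N)} \geq K\, |\ov E(w,v)|$, which is the desired conclusion (the $k$ in the hypothesis being read as $K$).

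Since Theorem~\ref{main} and Lemma~\ref{lemma} already carry the full weight, I do not expect any genuine obstacle here. The only points that require care are getting the alternation of quantifiers in part (1) exactly right, including the harmless strict-versus-non-strict adjustment via $\delta = \varepsilon/2$, and correctly justifying the path-counting identity $\ov h_v^{(N)} = \sum_{u \in W_m} \ov h_u^{(m)} |\ov E(u,v)|$ underlying part (2). The monotonicity of $M_n$ is not logically needed for the equivalence in (1), but I would include it because it makes the choice of $\delta$ and the dichotomy between zero and positive measure fully transparent.
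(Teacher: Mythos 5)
Your proposal is correct and follows essentially the same route as the paper: part (1) is obtained by negating the quantifier string in Theorem~\ref{main}, and part (2) is Lemma~\ref{lemma} combined with the bound $\ov h_v^{(N)} \geq |\ov E(w,v)|$, which is exactly what the paper does (it simply calls that last inequality obvious, whereas you justify it via the decomposition $\ov h_v^{(N)} = \sum_{u \in W_m} \ov h_u^{(m)}\,|\ov E(u,v)|$). The monotonicity of $M_n$ is a harmless extra not present in the paper.
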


\begin{proof}
(1) This is a straightforward corollary of Theorem~\ref{main}.

(2) Suppose that $\mu(X_{\ov B}) = 0$. Then, by Lemma~\ref{lemma}, for every $m > 1$ and every $k > 1$ there exists $N > m$ such that for every $w \in W_m$ and every $v \in W_N$ we have $|E(w,v)| \geq K \ov h_v^{(N)}$. It is obvious that $ \ov h_v^{(N)} > |\ov E(w,v)|$. Hence $|E(w,v)| \geq K |\ov E(w,v)|$.
\end{proof}

Theorem \ref{main} states, in other words, that the structural properties of the diagram $B$ determine whether the measure of a path space of a subdiagram $\ol B$ is  zero. Another corollary of this result shows that the extension of {\em any} measure $\ol \mu$ from $X_{\ol B}$ must be infinite.

\begin{corol}\label{corol_infty}
Let $\ol B$ be a subdiagram of $B$ such that $X_{\ol B}$ is a thin subset of $X_B$.  Then for any probability invariant measure $\ov \mu$ on $\ov B$ we have $\widehat{\overline{\mu}}(\wh X_{\ov B}) = \infty$.
\end{corol}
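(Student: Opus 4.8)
The plan is to read off the conclusion from the characterization of thinness in Theorem~\ref{main} together with the explicit formula (\ref{extension_method_vertex}) for the extended measure. By the discussion following Theorem~\ref{main}, saying that $X_{\ov B}$ is thin amounts exactly to condition (\ref{thin set condition}): for every $\varepsilon > 0$ there is a level $n = n(\varepsilon)$ such that $\ov h_w^{(n)}/h_w^{(n)} < \varepsilon$, hence $h_w^{(n)}/\ov h_w^{(n)} > 1/\varepsilon$, for every $w \in W_n$. This is the only structural input I would use.

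Next I would expand the approximating sum. Substituting $\ov p_w^{(n)} = \ov\mu(\ov X_w^{(n)})/\ov h_w^{(n)}$ into the expression $\widehat{\ov\mu}(\wh X_{\ov B}^{(n)}) = \sum_{w \in W_n} h_w^{(n)} \ov p_w^{(n)}$ gives
\begin{equation*}
\widehat{\ov\mu}(\wh X_{\ov B}^{(n)}) = \sum_{w \in W_n} \frac{h_w^{(n)}}{\ov h_w^{(n)}}\, \ov\mu(\ov X_w^{(n)}).
\end{equation*}
The key point is that the towers $\{\ov X_w^{(n)} : w \in W_n\}$ form a partition of $X_{\ov B}$, so $\sum_{w \in W_n} \ov\mu(\ov X_w^{(n)}) = \ov\mu(X_{\ov B}) = 1$ since $\ov\mu$ is a probability measure. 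Evaluating at the level $n = n(\varepsilon)$ supplied by (\ref{thin set condition}) and using the termwise lower bound $h_w^{(n)}/\ov h_w^{(n)} > 1/\varepsilon$ yields $\widehat{\ov\mu}(\wh X_{\ov B}^{(n)}) > 1/\varepsilon$.

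Finally I would pass to the limit. Because $\wh X_{\ov B}^{(n)} \subset \wh X_{\ov B}^{(n+1)}$, the numbers $\widehat{\ov\mu}(\wh X_{\ov B}^{(n)})$ are nondecreasing and converge to $\widehat{\ov\mu}(\wh X_{\ov B})$ by (\ref{extension_method}); hence $\widehat{\ov\mu}(\wh X_{\ov B}) \geq \widehat{\ov\mu}(\wh X_{\ov B}^{(n(\varepsilon))}) > 1/\varepsilon$. As $\varepsilon > 0$ is arbitrary, $\widehat{\ov\mu}(\wh X_{\ov B}) = \infty$. For an edge subdiagram the argument is identical, with $V_n$ in place of $W_n$ and (\ref{extension_method_edge}) in place of (\ref{extension_method_vertex}). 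I do not expect a genuine obstacle here; the only delicate point is conceptual rather than computational, namely ensuring that ``thin'' is invoked in its equivalent form (\ref{thin set condition}), which is precisely what Theorem~\ref{main} provides, so that a single favorable level $n(\varepsilon)$ already forces the monotone limit to exceed $1/\varepsilon$.
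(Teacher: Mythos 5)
Your proposal is correct and follows essentially the same route as the paper: both extract the level $n(\varepsilon)$ from condition (\ref{thin set condition}) via Theorem~\ref{main}, rewrite $\sum_{w} h_w^{(n)} \ov p_w^{(n)}$ as $\sum_{w} \bigl(h_w^{(n)}/\ov h_w^{(n)}\bigr)\, \ov h_w^{(n)} \ov p_w^{(n)}$, and use $\sum_{w \in W_n} \ov h_w^{(n)} \ov p_w^{(n)} = 1$ to force the bound $1/\varepsilon$. The only difference is presentational: you argue directly via the monotone limit in (\ref{extension_method}), while the paper phrases the same inequality as a proof by contradiction.
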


\begin{proof} Assume that the converse holds, i.e., there exists $M$ such that $\widehat{\overline{\mu}}(\wh X_{\ov B}) < M$. Take $\varepsilon > 0$ such that $\dfrac{1}{\varepsilon} > M$.  Given $\varepsilon > 0$,  we can find $n = n(\varepsilon)$, by Theorem \ref{main}, such that $\dfrac{\ov h_w^{(n)}}{h_w^{(n)}} < \varepsilon$ for every $w \in W_n$. Then
\begin{eqnarray*}
\widehat{\overline{\mu}}(\wh X_{\ov B}) & > &\sum_{w \in W_n} h_w^{(n)} \ov p_w^{(n)}\\
 & = &\sum_{w \in W_n} \frac{h_w^{(n)}}{\ov h_w^{(n)}} \ov h_w^{(n)} \ov p_w^{(n)}\\
& > & \frac{1}{\varepsilon} \sum_{w \in W_n} \ov h_w^{(n)} \ov p_w^{(n)}\\
% & = & \frac{1}{\varepsilon}\\
& > & M.
\end{eqnarray*}
This is a contradiction.
\end{proof}

\begin{remar}\label{edge subdiagram} It is not hard to see that
Theorem~\ref{main} and Corollary~\ref{corol_infty} hold also in the case when $\ov B$ is an edge subdiagram. The proofs are analogous to the case of vertex subdiagrams.
\end{remar}

\begin{remar}\label{propnottrue}
(1) We first note that Proposition \ref{stationary} holds also for vertex subdiagrams. Indeed, if $\ol B$ is a vertex subdiagram of a stationary simple Bratteli diagram, then $\mu (X_{\ol B}) =0$, where $\mu$ is a unique probability invariant measure on $B$.

(2) On the other hand, Proposition~\ref{stationary} does not hold in case of Bratteli  diagrams $B$ of finite rank. More precisely, there are vertex subdiagrams $\ol B$ of finite rank Bratteli diagrams whose path spaces $X_{\ol B}$ are of positive measure in $X_B$ (see details in \cite{BKMS_2013}).

In order to illustrate this fact, we recall  Example~\ref{ERS,ECS}. One can easily find an example of a Bratteli diagram $B$ and its subdiagram $\ov B$ such that $\wh{\ov \mu} (\wh X_{\ov B})$ is finite. For instance, let
$$
F_n =
\begin{pmatrix}
1 & \ldots & 1 & 2\\
\vdots & \ddots & \vdots & \vdots\\
1 & \ldots & 1 & 2
\end{pmatrix}
$$
 be a matrix with the ERS property. Take
$$
\ov F_n =
\begin{pmatrix}
1 & \ldots & 1 & 1\\
\vdots & \ddots & \vdots & \vdots\\
1 & \ldots & 1 & 1
\end{pmatrix}.
$$
Then $\ov F_n$ has the ECS property and, by relation (\ref{ERS,ECS,neccsuff}),
$$
\left(\widehat{\overline{\mu}}(\widehat{X}_{\overline{B}}) < \infty \right) \ \Longleftrightarrow \
\left(\sum_{n=1}^\infty \frac{r_0 \ldots r_{n-1}}{\overline{c}_0 \ldots \overline{c}_{n}} |V_{n+1}| = \sum_{n=1}^\infty \prod_{i=0}^{n-1} \frac{|V_i|+1}{|V_{i+1}|}  < \infty\right).
$$
If, for instance, $|V_i| = 2^i$, then we obtain $\widehat{\overline{\mu}}(\widehat{X}_{\overline{B}}) < \infty$. Note that in the case of stationary diagrams $B$ and $\ov B$, the measure $\wh{\ov \mu} (\wh X_{\ov B})$ is always infinite.
\end{remar}

%----------------------------------------------------------------------------------------------------------
%--------------------------------EQUIVALENT PROCEDURES----------------------------------------
%----------------------------------------------------------------------------------------------------

The procedure of measure extension that was regularly used above can be interpreted in the following way. Let $\ol B$ be a Bratteli subdiagram with path space $X_{\ol B}$ and an ergodic probability measure $\ol \mu$ on it such that the measure extension $\wh{\ol\mu}$ is finite. We have a sequence of clopen sets $Y^{(n)}_{\ov B}$ such that $\bigcap_n Y^{(n)}_{\ov B} =X_{\ol B}$. When we extend $\ol \mu$ to $\wh{\ol\mu}$ we work consequently with cylinder sets taken from the sets $Y^{(n)}_{\ov B}$ and construct the measure extension. On the other hand, we could use the measure space $(X_{\ol B}, \ol\mu)$ and a Vershik map $T$ acting on $X_B$ to define a partition of $X_B$ into the towers constructed by the first return function; this construction is a classical one in the ergodic theory. We remark that in order to use the notion of a Vershik map, one needs to  turn $B$ into an ordered Bratteli diagram (see, for instance, \cite{BKY14} for more information on orders on Bratteli diagrams). Using the first return function we simultaneously define an extension of  $\ol\mu$  by $T$-invariance to an ergodic measure $\nu$ on  $X_B$.  Our result (see below) states that these two constructions give the same measure on $X_B$.

\begin{prop}
Let $\ov B = \ov B(W_n)$ be a vertex subdiagram of a Bratteli diagram $B = (V, E)$, and let  $\ov \mu$ be a finite ergodic measure on the path space of $\ov B$ such that $\widehat{\overline{\mu}}(\wh X_{\ov B}) < \infty$.
Assume that $|E(v,w)| \geq 1$ for every $v \in V_{n+1}$ and $w \in V_n$ ($n \in \mathbb{N}$).
Let $\omega$ be any order on $B$ such that the sets  of maximal and minimal
paths, $X_{\max}(\omega)$ and $X_{\min}(\omega)$, have $\wh{\ol\mu}$-measure zero\footnote{The definition of notions used here  can be found in \cite{BKY14}.}.
Let $\nu$ the measure on $X_B$ obtained
from $\ov \mu$ using the first return function for the Vershik map $T_{\omega}$.
Then the measures $\widehat{\overline{\mu}}$ and $\nu$ are equivalent.
\end{prop}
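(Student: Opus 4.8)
The plan is to realise both $\nu$ and $\wh{\ov\mu}$ as finite $T_\omega$-invariant measures on $\wh X_{\ov B}$ that restrict to $\ov\mu$ on the base $X_{\ov B}$, and then to invoke the uniqueness of the first-return (Kakutani--Rokhlin) tower extension. Since both constructions start from the same transverse data $(X_{\ov B},\ov\mu)$, this will in fact force $\nu=\wh{\ov\mu}$, which is stronger than the asserted equivalence.

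First I would reconcile the two equivalence relations at play. On an ordered Bratteli diagram the Vershik map $T_\omega$ generates the tail equivalence relation $\mathcal E$ everywhere off the $T_\omega$-orbit of $X_{\max}(\omega)\cup X_{\min}(\omega)$. By hypothesis this exceptional set is $\wh{\ov\mu}$-null, so $\mathcal E$-invariance and $T_\omega$-invariance agree $\wh{\ov\mu}$-almost everywhere; in particular the canonical extension $\wh{\ov\mu}$, which is $\mathcal E$-invariant by construction, is $T_\omega$-invariant, and it is ergodic because $\ov\mu$ is. Since $\wh{\ov\mu}(\wh X_{\ov B})<\infty$, after normalisation $(\wh X_{\ov B},\wh{\ov\mu},T_\omega)$ is a finite ergodic measure-preserving system in which $X_{\ov B}$ has positive measure $\ov\mu(X_{\ov B})=1$.

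Next I would justify the first-return function. Poincar\'e recurrence applied to the finite invariant measure $\wh{\ov\mu}$ shows that the return time $\rho(x)=\min\{k\ge 1:T_\omega^{\,k} x\in X_{\ov B}\}$ is finite for $\wh{\ov\mu}$-a.e.\ $x\in X_{\ov B}$, hence for $\ov\mu$-a.e.\ $x$. This is precisely where the finiteness hypothesis enters: it makes $\nu$ a well-defined finite measure, of total mass $\int_{X_{\ov B}}\rho\,d\ov\mu$ by Kac's formula, and it guarantees that the tower levels $T_\omega^{\,j}\{x\in X_{\ov B}:\rho(x)>j\}$, $j\ge 0$, partition $\wh X_{\ov B}$ up to a null set (their union is the $T_\omega$-saturation of $X_{\ov B}$, which coincides mod null with $\wh X_{\ov B}=\mathcal E(X_{\ov B})$).

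Finally I would compare the measures level by level on this tower. By the very definition of the first-return extension, $\nu(T_\omega^{\,j}(A\cap\{\rho>j\}))=\ov\mu(A\cap\{\rho>j\})$ for every Borel $A\subset X_{\ov B}$ and every $j\ge 0$. On the other hand, $T_\omega$-invariance of $\wh{\ov\mu}$ together with $\wh{\ov\mu}|_{X_{\ov B}}=\ov\mu$ gives $\wh{\ov\mu}(T_\omega^{\,j}(A\cap\{\rho>j\}))=\wh{\ov\mu}(A\cap\{\rho>j\})=\ov\mu(A\cap\{\rho>j\})$. Hence $\nu$ and $\wh{\ov\mu}$ coincide on every tower level, therefore on the generated $\sigma$-algebra, and so $\nu=\wh{\ov\mu}$. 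The main obstacle is not this bookkeeping but the first step: pinning down that the orbit relation of $T_\omega$ equals $\mathcal E$ on a set that is conull for \emph{both} measures simultaneously, so that one may pass freely between $T_\omega$-invariance and $\mathcal E$-invariance; the remaining arguments are the classical Kakutani--Abramov uniqueness of the induced construction, with the finiteness hypothesis serving only to supply recurrence.
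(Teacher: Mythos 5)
Your proof is correct, but it follows a genuinely different route from the paper's. The paper works constructively inside the diagram: it builds the first-return function level by level from the order $\omega$ (via successors of finite paths in $\ov B$), forms the towers $\wh Y^{(n)}$, identifies the ``defect'' $l_w^{(n)}$ of cylinders at the top of each tower $\wh X_w^{(n)}$ that the skyscraper misses, and then proves $\sum_{w\in W_n} l_w^{(n)}\,\ov p_w^{(n)}\to 0$ by a delicate estimate that uses both the finiteness criterion of Theorem \ref{thm from BKK} (convergence of $\sum_n\sum_{v\in W_{n+1}}\sum_{w\in W_n'} f_{vw}^{(n)}h_w^{(n)}\ov p_v^{(n+1)}$) and the hypothesis $|E(v,w)|\geq 1$. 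You replace all of this combinatorics with soft ergodic theory: $T_\omega$-invariance of $\wh{\ov\mu}$ (from $\mathcal E$-invariance plus nullity of $X_{\max}\cup X_{\min}$), Poincar\'e recurrence to define the return time $\ov\mu$-a.e., and the level-by-level identification of the two measures on the Kakutani tower. This is shorter, yields equality $\nu=\wh{\ov\mu}$ outright (the paper's argument does too, implicitly), and — notably — never seems to invoke $|E(v,w)|\geq 1$, so it proves a slightly more general statement; what it gives up is the explicit description of the return function in terms of the diagram and the visible mechanism by which finiteness of the extension controls the combinatorial defect. Two small points to tighten: the union of the forward tower levels $T_\omega^{\,j}\{\rho>j\}$ is a priori only the \emph{forward} saturation of $X_{\ov B}$, so to conclude it is conull in $\wh X_{\ov B}=\mathcal E(X_{\ov B})$ you should also apply Poincar\'e recurrence to $T_\omega^{-1}$ (so that a.e.\ point of the saturation has a backward orbit meeting $X_{\ov B}$); and you should note explicitly that the $T_\omega$-orbit relation agrees with $\mathcal E$ off the $\mathcal E$-saturation of $X_{\max}\cup X_{\min}$, which is $\wh{\ov\mu}$-null by invariance. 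Neither is a gap, just bookkeeping worth recording.
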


\begin{proof}

We first observe that, for a given diagram $B$, there always exists an order
$\omega$ for which the condition of the theorem holds.

Given such an $\omega$, the Vershik map $T = T_\omega \colon X_B \setminus X_{\max} \rightarrow X_B \setminus X_{\min}$ is well defined.
Then $T$ defines the first return function $n(x)$ on $X_{\ov B} \setminus X_{\max}$ as follows: $n(x) = \min\{n \geq 1 \colon T^n(x) \in X_{\ov B} \setminus X_{\min}\}$ for $x \in X_{\ov B} \setminus X_{\max}$.

Let $e = (e_1, \ldots, e_n)$ be a finite path in $\ov B$ which ends in a vertex $w \in W_n$. Since the set of all such paths is ordered, we can consider all successors of
$e$ which also belong to $\ol B$.  Suppose that this set is nonempty and
denote by $e'$ the nearest successor from this set.  Denote by $X_{w}^{(n)}(e)$ a cylinder set in $X_B$ corresponding to $e$. There exists an integer $k > 0$ such that
$X_{w}^{(n)}(e') = T^k(X_{w}^{(n)}(e))$. Denote $\ov X_{w}^{(n)}(e) = X_{w}^{(n)}(e) \cap X_{\ov B}$.
Then for a path $x \in \ov X_{w}^{(n)}(e)$ we set $n(x) = k$.
Let $E_w^{(n)} \in \ov X_{w}^{(n)}$ be the cylinder set that does not have a successor in $\ov X_{w}^{(n)}$. Thus, $E_w^{(n)}$ is generated by the path with the largest assigned number amongst all finite paths $(x_1, \ldots ,x_n)$ that lie in $\ov B$ with $r(x_n) = w \in W_n$.
We have defined the value of the return function $n(x)$ on each set $\ov X_{w}^{(n)} \setminus E_w^{(n)}$, $w \in W_n$ and $n \in \mathbb{N}$. Let
$$
I_n = \bigcup_{w \in W_n} \ov X_{w}^{(n)} \setminus E_w^{(n)}.
$$
Then $I_n \subset I_{n+1}$, and this means that the return function is well-defined. Indeed, the first return function $n(x)$ is defined on the set
$$
\bigcup_{n=1}^{\infty} \bigcup_{w \in W_n} (\ov X_w^{(n)} \setminus E_{w}^{(n)}) = \bigcup_{n=1}^{\infty} I_n.
$$
Let $\ov h^{(n)}_{\min} = \min\{\ov h_w^{(n)}, w \in W_n\}$. Since $|E(v,w)| \geq 1$ for every $v \in V_{n+1}$, $w \in V_n$ and $n \in \mathbb{N}$, we have $\ov h^{(n)}_{\min} \rightarrow \infty$ as $n \rightarrow \infty$. Since $\ov \mu(X_{\ov B}) = \sum_{w \in W_n}\ov h_w^{(n)}\ov p_w^{(n)} = 1$, we have
$$
\ov \mu( \bigcup_{w \in W_n} E_{w}^{(n)}) = \sum_{w \in W_n} \ov p_w^{(n)} \leq \frac{1}{\ov h_{min}^{(n)}} \rightarrow 0
$$
as $n \rightarrow \infty$. This implies that $\ov \mu(I_n) \rightarrow 1$ and the return function is defined on $X_{\ov B}$ except for a subset of $\ov \mu$-measure zero.

Let the cylinder set $E_w^{(n)}$ be generated by a finite path $(x_1, \ldots, x_n)$ that lies in $\ov B$. Recall that this path does not have successor in $\ov B$.
Let $l_w^{(n)} - 1$ be the amount of paths $(y_1, \ldots, y_n)$ in $B$ such that $r(y_n) = w \in W_n$ and each of these paths $(y_1, \ldots ,y_n)$ has the assigned order number greater than that of $(x_1, \ldots, x_n)$. Thus, these $l_w^{(n)} - 1$ paths do not belong to $\ov B$.
For $w \in W_n$, let $\wh Y_w^{(n)}$ be the set $\wh X_w^{(n)}$ without last $l_w^{(n)}$ cylinder sets, indicated above.
For every $n \geq 1$, the function $n(x)$ determines a $T$-tower $\wh Y^{(n)}$ over the set $\bigcup_{w \in W_n} (\ov X_w^{(n)} \setminus E_{w}^{(n)})$. Then for every $w \in W_n$, the set $\wh Y^{(n)}$ contains all cylinder sets of the tower $\wh X_w^{(n)}$ except for the last $l_w^{(n)}$ sets. Let $\nu_n$ be the measure extended by invariance  from the measure $\ov \mu$ on $\bigcup_{w \in W_n} (\ov X_w^{(n)} \setminus E_{w}^{(n)})$. We have
$$
\nu_n (\wh Y^{(n)}) = \sum_{w \in W_n} \ov p_w^{(n)}(h_w^{(n)} - l_w^{(n)}).
$$

It is easy to see that $\wh Y^{(n)} \subset \wh Y^{(n+1)}$. Denote $\wh Y = \bigcup_{n=1}^{\infty}\wh Y^{(n)}$. Then $\wh Y$ is a skyscraper over $X_{\ov B} \setminus \bigcap_{n=1}^{\infty}(\bigcup_{w \in W_n} E_w)$, i.e. $\wh Y = \mathcal{E}(X_{\ov B} \setminus \bigcap_{n=1}^{\infty}(\bigcup_{w \in W_n} E_w))$. Moreover, $\wh Y \subset \wh X_{\ov B}$ and the measure $\nu$ on $\wh Y$, where $\nu = \lim_{n \rightarrow \infty} \nu_n$, coincides with the measure $\wh {\ov \mu}$.
To prove that the extension construction of the measure $\ov \mu$ coincides with the above ``classical'' skyscraper construction, it suffices to show that $\wh {\ov \mu} (\wh X_{\ov B} \setminus \wh Y) = 0$.
%We have
%$$
%\wh {\ov\mu} (\wh Y^{(n)}) = \sum_{w \in W_n} \ov p_{w}^{(n)}(h_w^{(n)} - l_w^{(n)}).
%$$

Take $v \in W_{n+1}$. Then the tower $\wh X_v^{(n+1)}$ consists of some subcolumns of the $\tl T$-towers $\wh X_{w_1}^{(n)}, \ldots, \wh X_{w_k}^{(n)}$. Denote these subcolumns by $Z_{w_1}^{(n)}, \ldots, Z_{w_k}^{(n)}$. Let $w_{n,v} \in W_n$ be the ``last vertex'' among $w_1, \ldots, w_k$ belonging to $W_n$, i.e. if $w_j = w_{n,v} $ then $w_{j+1}, \ldots, w_k \in V_n \setminus W_n$. Here the order on vertices is induced by the order on edges that end in $v$.
We have
\begin{equation}\label{ineqlwv}
l_v^{(n+1)} \leq \sum_{w \in W_n'}f_{vw}^{(n)}h_w^{(n)} + l_{w_{n,v}}^{(n)}.
\end{equation}
Applying~(\ref{ineqlwv}) to $l_{w_{n,v}}^{(n)}$ and so on, we find  uniquely a  sequence of verices $\{w_{l,v}\}_{l = 1}^n$ such that
$$
l_v^{(n+1)} \leq \sum_{u_n \in W_n'}f_{vu_n}^{(n)}h_{u_n}^{(n)} + \sum_{u_{n-1} \in W_{n-1}'}f_{w_{n,v}u_{n-1}}^{(n-1)}h_{u_{n-1}}^{(n-1)} + \ldots + \sum_{u_1 \in W_1'}f_{w_{2,v}u_1}^{(1)}h_{u_1}^{(1)} + f^{(0)}_{w_{1,v}v_0}.
$$
Denote $M_l = \max_{w \in W_{l+1}}\sum_{u \in W_l'}f_{wu}^{(l)}h_{u}^{(l)}$. Then $M_l = \sum_{u \in W_l'}f_{w_{l+1}u}^{(l)}h_{u}^{(l)}$ for some $w_{l+1} \in W_{l+1}$. Thus, we can write
$$
l_v^{(n+1)} \leq \sum_{u_n \in W_n'}f_{vu_n}^{(n)}h_{u_n}^{(n)} + \sum_{u_{n-1}
\in W_{n-1}'}f_{w_{n}u_{n-1}}^{(n-1)}h_{u_{n-1}}^{(n-1)} + \ldots + \sum_{u_1
\in W_1'}f_{w_{2}u_1}^{(1)}h_{u_1}^{(1)} + f^{(0)}_{w_{1}v_0},%РЅРµС‚ Р·Р°РІРёСЃРёРјРѕСЃС‚Рё РѕС‚ v
$$
where, in the right hand part of the above relation, all summands but the first one are independent of $v$. Thus, we obtain
\begin{multline*}
\sum_{v \in W_{n+1}} l_v^{(n+1)} \ov p^{(n+1)}_v \leq \sum_{v \in W_{n+1}, u \in W_n'} f_{vu_n}^{(n)}h_{u_n}^{(n)}\ov p^{(n+1)}_v + \\ \sum_{v \in W_{n+1}}\ov p^{(n+1)}_v \left( \sum_{u_{n-1} \in W_{n-1}'}f_{w_{n}u_{n-1}}^{(n-1)}h_{u_{n-1}}^{(n-1)} + \ldots + \sum_{u_1 \in W_1'}f_{w_{2}u_1}^{(1)}h_{u_1}^{(1)} + f^{(0)}_{w_{1}v_0} \right).
\end{multline*}
Rewrite the latter inequality in the form
\begin{multline}\label{long}
\sum_{v \in W_{n+1}} l_v^{(n+1)} \ov p^{(n+1)}_v \leq \sum_{v \in W_{n+1}, u \in W_n'} f_{vu_n}^{(n)}h_{u_n}^{(n)}\ov p^{(n+1)}_v + \\ \sum_{v \in W_{n+1}}\ov p^{(n+1)}_v \left( \frac{1}{\ov p_{w_n}^{(n)}}\sum_{u_{n-1} \in W_{n-1}'}f_{w_{n}u_{n-1}}^{(n-1)}h_{u_{n-1}}^{(n-1)}\ov p_{w_n}^{(n)}  + \ldots + \right.\\
\left. \frac{1}{\ov p_{w_2}^{(2)}}\sum_{u_1 \in W_1'}f_{w_{2}u_1}^{(1)}h_{u_1}^{(1)}\ov p_{w_2}^{(2)} + f^{(0)}_{w_{1}v_0} \right).
\end{multline}
Recall that $|E(v,u)| \geq 1$ for every $v \in V_{n+1}$ and $u \in V_n$. Hence %maybe u in V n?
$$
\ov p_u^{(n)} = \sum_{v \in V_{n+1}} f_{vu}^{(n)} \ov p_v^{(n+1)} \geq \sum_{v \in V_{n+1}} \ov p_v^{(n+1)}
$$
and $\ov p_u^{(n)} \leq \ov p_w^{(l)}$ for any $w \in W_l$ and any $l \leq n$. Thus, we get
\begin{equation}\label{one}
\frac{\sum_{v \in V_{n+1}} \ov p_v^{(n+1)}}{\ov p_w^{(l)}} \leq 1
\end{equation}
for $l = 1, 2, \ldots, n$. Let
$$
K = \sum_{n = 1}^{\infty} \sum_{v \in W_{n+1}}\sum_{w \in W_n'} f_{vw}^{(n)}h_w^{(n)}\ov p_v^{(n+1)}.
$$
Since $\widehat{\overline{\mu}}(\wh X_{\ov B}) < \infty$, we have $K < \infty$ by Theorem~\ref{thm from BKK}. Therefore, given $\varepsilon > 0$ we can find $l_0$ such that
\begin{equation}\label{firsthalf}
\sum_{l = l_0}^{n-1}\sum_{u \in W_l'}f_{w_{l+1}u}^{(l)}h_u^{(l)}\ov p_{w_{l+1}}^{(l+1)} + \sum_{v \in W_{n+1}}\sum_{u \in W_n'}f^{(n)}_{vu}h_u^{(n)}\ov p_v^{(n+1)} < \frac{\varepsilon}{2}
\end{equation}
for any $n \geq l_0 + 1$. Since $\sum_{v \in W_n} \ov p^{(n)}_v$ tends to zero as $n$ tends to infinity, we can choose $n_0 \geq l_0 + 1$ such that
\begin{equation}\label{epsilonover2k}
\frac{1}{\ov p_w^{(l)}} \sum_{v \in W_{n+1}} \ov p^{(n+1)}_v < \frac{\varepsilon}{2K}
\end{equation}
for every $n \geq n_0$ and $l = 1, \ldots, l_0$. From inequality~(\ref{long}), using~(\ref{firsthalf}), (\ref{one}) and (\ref{epsilonover2k}), we get
\begin{multline*}
\sum_{v \in W_{n+1}} l_v^{(n+1)} \ov p^{(n+1)}_v \leq \left( \sum_{v \in W_{n+1}, u \in W_n'} f_{vu_n}^{(n)}h_{u_n}^{(n)}\ov p^{(n+1)}_v + \sum_{l = l_0+1}^{n-1} \sum_{u_{l} \in W_{l}'}f_{w_{l+1}u_{l}}^{(l)}h_{u_{l}}^{(l)}\ov p_{w_{l+1}}^{(l+1)}\right) \\
+ \frac{\varepsilon}{2K} \left(\sum_{l=1}^{l_0} \sum_{u_l \in W_l'}f_{w_{l+1}u_l}^{(1)}h_{u_l}^{(l)}\ov p_{w_{l+1}}^{(l+1)} + f^{(0)}_{w_{1}v_0} \right) < \varepsilon
\end{multline*}
for $n \geq n_0$. Hence,
$$
\sum_{w \in W_n} l_w^{(n)}\ov p_w^{(n)} \rightarrow 0 \mbox { as } n \rightarrow \infty.
$$
\begin{eqnarray*}
\wh\mu(\wh X_{\ov B} \setminus \wh Y) &=& \wh\mu(\wh X_{\ov B}) -  \wh\mu(\wh Y)\\
&=& \lim_{n \rightarrow \infty} \sum_{w \in W_n} \ov p_w^{(n)}h_w^{(n)} - \lim_{n \rightarrow \infty} \sum_{w \in W_n} \ov p_w^{(n)}(h_w^{(n)} - l_w^{(n)}) \\
&=& \lim_{n \rightarrow \infty} \sum_{w \in W_n} \ov p_w^{(n)}l_w^{(n)}\\
&=& 0.
\end{eqnarray*}
Therefore, $ \wh\mu(\wh X_{\ov B}) =  \wh\mu(\wh Y) = 1$.

\end{proof}

\section{Example}\label{Example section}

In this section, we deal with a class of Bratteli diagrams for which our main results can be applied.

Let $B$ be a Bratteli diagram defined by the sequence of incidence matrices
$$
F_n =
\begin{pmatrix}
a_n& 1 & \ldots & 1 \\
0 & 1 & \ldots & 1 \\
\vdots& \vdots & \ddots & \vdots\\
0& 1 & \ldots & 1\\
1&1 & \ldots & 1
\end{pmatrix}.
$$
We assume that they have the ECS property, that is $a_n + 1 = |V_{n+1}|$ for all $n\geq 0$. Suppose for simplicity that $a_0 = 1$. There are two natural vertex subdiagrams of $B$: one of them, $\ov B_1$, is supported by the first vertex of each level and represents the odometer $(a_n)$, the other one, $\ov B_2$, contains all vertices except the first vertex and is similar to that that was studied  in Example \ref{ex1}. We will consider the measure $\mu$ on $B$ defined by its values $p_w^{(n)} = \dfrac{1}{|V_1| \cdots |V_n|}$ on cylinder sets.

For $\ov B_1$, we have $\ov F_n = (a_n)$. Then
\begin{prop}\label{meas_of_sbd}
$\mu(X_{\ov B_1}) = 0$ if and only if $\sum_{i = 0}^\infty \dfrac{1}{a_i} = \infty$.
\end{prop}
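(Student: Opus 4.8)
The plan is to compute $\mu(X_{\ov B_1})$ directly as the limit of the measures of the decreasing clopen sets $Y^{(n)}_{\ov B_1}$, rather than going through the series $S$ of Theorem~\ref{mu_vertex_sbd}. Since $\ov B_1$ is the odometer supported by the single first vertex $w_n$ of each level $n$, the set $W_n$ is a singleton, and the identity $\mu(Y^{(n)}_{\ov B}) = \sum_{w \in W_n} \ov h_w^{(n)} p_w^{(n)}$ established in the proof of Theorem~\ref{mu_vertex_sbd} collapses to the single product $\ov h^{(n)}_{w_n}\, p^{(n)}_{w_n}$. Because the $Y^{(n)}_{\ov B_1}$ decrease to $X_{\ov B_1}$ and $\mu$ is finite, continuity of the measure from above gives $\mu(X_{\ov B_1}) = \lim_{n\to\infty} \ov h^{(n)}_{w_n}\, p^{(n)}_{w_n}$.

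First I would evaluate the two factors. The incidence matrix of $\ov B_1$ is the scalar $\ov F_n = (a_n)$, so the recursion $\ov F_n \ov h^{(n)} = \ov h^{(n+1)}$ together with $a_0 = 1$ yields $\ov h^{(n)}_{w_n} = \prod_{i=0}^{n-1} a_i$. On the other hand, the $ECS$ property forces $|V_{i+1}| = a_i + 1$, and the measure $\mu$ is given on cylinders by $p^{(n)}_{w_n} = (|V_1| \cdots |V_n|)^{-1} = \prod_{i=0}^{n-1}(a_i + 1)^{-1}$. Multiplying these gives the telescoping product
\begin{equation*}
\mu(Y^{(n)}_{\ov B_1}) = \prod_{i=0}^{n-1} \frac{a_i}{a_i + 1}, \qquad \text{hence} \qquad \mu(X_{\ov B_1}) = \prod_{i=0}^{\infty} \frac{a_i}{a_i + 1}.
\end{equation*}

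It then remains to read off the dichotomy. Writing $\frac{a_i}{a_i+1} = 1 - \frac{1}{a_i+1}$ and invoking the standard criterion $\prod_i (1 - x_i) = 0 \Longleftrightarrow \sum_i x_i = \infty$ for $0 \le x_i < 1$ (here $x_i = (a_i+1)^{-1} \le \tfrac12$ since $a_i \ge 1$), I conclude that $\mu(X_{\ov B_1}) = 0$ if and only if $\sum_{i} \frac{1}{a_i+1} = \infty$. Finally, the elementary comparison $\frac{1}{a_i+1} \le \frac{1}{a_i} \le \frac{2}{a_i+1}$, valid for $a_i \ge 1$, shows that $\sum_i \frac{1}{a_i+1}$ and $\sum_i \frac{1}{a_i}$ converge or diverge together, which produces the stated equivalence. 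The computation is essentially routine; the only points demanding care are the boundary bookkeeping in the height recursion (the normalisation forced by $a_0 = 1$) and the passage from $\sum (a_i+1)^{-1}$ to $\sum a_i^{-1}$, and I do not anticipate a genuine obstacle, as the singleton structure of $W_n$ reduces everything to a single infinite product.
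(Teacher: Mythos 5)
Your proposal is correct and follows essentially the same route as the paper: both compute $\mu(X_{\ov B_1}) = \lim_n \ov h_{1}^{(n)} p_{1}^{(n)} = \prod_{i=0}^{\infty} \frac{a_i}{a_i+1}$ and read off the dichotomy from the standard infinite-product criterion. The only difference is that you spell out the final equivalence between $\sum (a_i+1)^{-1}$ and $\sum a_i^{-1}$, which the paper leaves implicit.
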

\begin{proof} Indeed, we have
$$
\mu(X_{\ov B_1}) = \lim_{n \rightarrow \infty} \mu(Y_1^{(n)}) = \lim_{n \rightarrow \infty} \ov h_1^{(n)} p_1^{(n)} = \lim_{n \rightarrow \infty} \frac{a_0 \cdots a_{n-1}}{|V_1| \cdots |V_n|} = \prod_{i= 0}^\infty \frac{a_i}{(a_i + 1)}.
$$
Hence, $\mu(X_{\ov B_1}) = 0$ if $\sum_{i = 0}^\infty \dfrac{1}{a_i} = \infty$ and $\mu(X_{\ov B_1}) \in (0,1)$ if $\sum_{i = 0}^\infty \dfrac{1}{a_i} < \infty$.
\end{proof}

\begin{remar}
In fact, one can state even more. In notation used in Theorem \ref{mu_vertex_sbd}, we can conclude that the relation
\begin{eqnarray*}
  \mu(Y_1^{(1)}) &=& \mu(X_{\ov B_1}) + \sum_{n=1}^{\infty} \sum_{v \in W'_{n+1}} \sum_{w \in W_{n}} f_{v,w}^{(n)} p_v^{n+1}\ov h_{w}^{(n)}  \\
  &=& \mu(X_{\ov B_1}) + \sum_{n=1}^{\infty} \frac{a_0 \cdots a_{n-1}}{(1+a_0) \cdots (1 + a_n)}
\end{eqnarray*}
holds. It follows that the series $S = \sum_{n=1}^{\infty} \dfrac{a_0 \ldots a_{n-1}}{(1+a_0) \cdots (1 + a_n)}$ converges for any integers $a_n > 1$; and if $\mu(X_{\ov B_1}) = 0$, then the sum $S = \mu(Y_1^{(1)}) = \dfrac{a_0}{1 + a_0}$ depends only on $a_0$.
\end{remar}

\begin{prop}\label{meas_ext12}
If $\mu(X_{\ov B_1}) = 0$, then $\widehat{\overline{\mu}}(\wh X_{\ov B_1}) = \infty$.
\end{prop}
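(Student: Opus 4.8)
The plan is to compute $\widehat{\ov\mu}(\wh X_{\ov B_1})$ directly and in closed form, which is feasible precisely because $\ov B_1$ is a single-vertex (odometer) subdiagram. By formula~(\ref{extension_method_vertex}) with $W_n=\{1\}$, and using that $\ov\mu(\ov X_1^{(n)})=\ov h_1^{(n)}\ov p_1^{(n)}=1$ forces $\ov p_1^{(n)}=1/\ov h_1^{(n)}$, I would first record
$$
\widehat{\ov\mu}(\wh X_{\ov B_1})=\lim_{n\to\infty}h_1^{(n)}\ov p_1^{(n)}=\lim_{n\to\infty}\frac{h_1^{(n)}}{\ov h_1^{(n)}},\qquad \ov h_1^{(n)}=a_0\cdots a_{n-1}.
$$
Thus everything reduces to understanding the growth of the ratio $h_1^{(n)}/\ov h_1^{(n)}$.

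Next I would set up a closed recursion for $h_1^{(n)}$. From~(\ref{formula for heights}) and the first row of $F_n$ one has $h_1^{(n+1)}=a_nh_1^{(n)}+\sum_{w\ge2}h_w^{(n)}$. The crucial simplification is a total-height identity: since $p_w^{(n)}=(|V_1|\cdots|V_n|)^{-1}$ for every $w$ and $\sum_wh_w^{(n)}p_w^{(n)}=\mu(X_B)=1$, we get $\sum_{w\in V_n}h_w^{(n)}=|V_1|\cdots|V_n|=:P_n$. Substituting $\sum_{w\ge2}h_w^{(n)}=P_n-h_1^{(n)}$ closes the recursion to $h_1^{(n+1)}=(a_n-1)h_1^{(n)}+P_n$.

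Then I would solve it. Put $r_n=h_1^{(n)}/\ov h_1^{(n)}$ and $Q_n=P_n/\ov h_1^{(n)}=\prod_{j=0}^{n-1}(1+a_j^{-1})$. Dividing the recursion by $\ov h_1^{(n+1)}=a_n\ov h_1^{(n)}$ yields $r_{n+1}=(1-a_n^{-1})r_n+a_n^{-1}Q_n$. A one-line induction, with base case $r_1=1=Q_1/2$ (here $a_0=1$, so $Q_1=2$) and the identity $Q_{n+1}=Q_n(1+a_n^{-1})$, then gives the clean closed form $r_n=\tfrac12Q_n$ for all $n$. Consequently $\widehat{\ov\mu}(\wh X_{\ov B_1})=\tfrac12\lim_{n}Q_n$, whereas the computation in Proposition~\ref{meas_of_sbd} gives $\mu(X_{\ov B_1})=\prod_{i\ge0}\frac{a_i}{a_i+1}=\lim_nQ_n^{-1}$. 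Hence $\mu(X_{\ov B_1})=0\iff Q_n\to\infty\iff\widehat{\ov\mu}(\wh X_{\ov B_1})=\infty$, which proves the proposition, indeed with the sharper equality $\widehat{\ov\mu}(\wh X_{\ov B_1})=(2\mu(X_{\ov B_1}))^{-1}$.

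The only real obstacle is spotting the exact identity $r_n=\tfrac12Q_n$; the rest is bookkeeping, and the total-height identity $\sum_wh_w^{(n)}=P_n$ is what makes the recursion closed and solvable. As a conceptual alternative that avoids all computation but yields only infiniteness, one checks that $B$ is simple—indeed $F_{n+1}F_n>0$, since every vertex other than the first has a column of ones in $F_n$, while the first vertex joins the last vertex, whose column in $F_{n+1}$ is all ones—so the hypothesis $\mu(X_{\ov B_1})=0$ makes $X_{\ov B_1}$ thin by Theorem~\ref{main}, and Corollary~\ref{corol_infty} then gives $\widehat{\ov\mu}(\wh X_{\ov B_1})=\infty$ immediately.
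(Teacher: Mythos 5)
Your main argument is correct, and it reaches the conclusion by a genuinely different (and sharper) route than the paper. The paper invokes the finiteness criterion of Theorem~\ref{thm from BKK}, which reduces the question to divergence of the series $\sum_n (a_0\cdots a_n)^{-1}\sum_{w\neq 1}h_w^{(n)}$, and then produces the lower bound $\sum_n a_n^{-1}=\infty$ by comparing the heights $h_w^{(n)}$, $w\neq1$, with those of an auxiliary edge-deleted subdiagram $G_n<F_n$. You instead close the height recursion $h_1^{(n+1)}=a_nh_1^{(n)}+\sum_{w\ge2}h_w^{(n)}$ via the total-height identity $\sum_{w}h_w^{(n)}=|V_1|\cdots|V_n|$ (valid because the ECS measure gives equal mass to all cylinders of a given length) and solve it exactly; the induction $r_{n+1}=(1-a_n^{-1})r_n+a_n^{-1}Q_n$ with base $r_1=1=Q_1/2$ indeed yields $h_1^{(n)}/\ov h_1^{(n)}=\tfrac12\prod_{j=0}^{n-1}(1+a_j^{-1})$, as a spot check with $h^{(2)}=(3,1,2)^T$, $h^{(3)}=(12,3,3,6)^T$ confirms. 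This gives the exact identity $\widehat{\ov\mu}(\wh X_{\ov B_1})=\bigl(2\mu(X_{\ov B_1})\bigr)^{-1}$, which is strictly more than the proposition asserts: you get the converse (finiteness of the extension when $\mu(X_{\ov B_1})>0$) for free, whereas the paper's lower-bound argument only yields the stated implication. What the paper's route buys is generality: the series criterion is the template that applies to arbitrary subdiagrams, while your closed form exploits the special structure of $F_n$. One caveat on your ``conceptual alternative'': Theorem~\ref{main} requires the measure to be \emph{ergodic}, and ergodicity of the ECS measure under the hypothesis $\mu(X_{\ov B_1})=0$ (equivalently $\sum_n a_n^{-1}=\infty$, by Proposition~\ref{meas_of_sbd}) is only established by the unique-ergodicity lemma at the end of Section~\ref{Example section}; as written, the appeal to Theorem~\ref{main} and Corollary~\ref{corol_infty} therefore carries an unstated dependency, though it is repairable. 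Your simplicity check $F_{n+1}F_n>0$ is correct.
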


\begin{proof}
We identify the vertices of $V_n$ with the set $\{1, ... , |V_n|\}$ for every $n$.
Since $\mu(X_{\ol B_1}) = 0$, we have that, by Proposition \ref{meas_of_sbd},
$\sum_{n\geq 0}a_n^{-1} =\infty$.

Let the matrix $G_n < F_n$ be such that $g^{(n)}_{v,w} = f^{(n)}_{v,w}$ for all vertices $v, w$ but $g^{(n)}_{|V_{n+1}|,1} =0$  for every $n$. If $k_w^{(n)}$ denotes the vector of heights corresponding to the subdiagram with incidence matrices $G_n$, then
\begin{eqnarray*}
S &= &\sum_{n=1}^\infty \frac{1}{a_0 \cdots a_n}\sum_{w \neq 1} h_w^{(n)}\\
 &> &\sum_{n=1}^\infty \frac{1}{a_0 \cdots a_n} k_w^{(n)}(|V_n| - 1) \\
&= & \sum_{n=1}^\infty \frac{|V_1| \cdots |V_{n-1}|}{a_0 \cdots a_n} a_{n-1}\\
 &= &\sum_{n=1}^\infty \frac{(1 + a_0) \cdots (1 + a_{n-2})}{a_0 \cdots a_{n-2}} \frac{1}{a_n}\\
  &> &\sum_{n=1}^\infty \frac{1}{a_n}.
\end{eqnarray*}
 Hence $S > \sum_{n = 0}^\infty \dfrac{1}{a_n} = \infty$ and $\widehat{\overline{\mu}}(\wh X_{\ov B_1}) = \infty$.
\end{proof}

For a subdiagram $\ov B = \ov B_2$ we can prove a statement that is analogous to Proposition \ref{meas_of_sbd}

\begin{prop}\label{meas_of_sbd2}
$\mu(X_{\ov B_2}) = 0$ if and only if $\sum_{i = 0}^\infty \dfrac{1}{a_i} = \infty$.
\end{prop}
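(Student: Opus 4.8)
The plan is to mimic the proof of Proposition~\ref{meas_of_sbd}, computing $\mu(X_{\ov B_2})$ explicitly as an infinite product and then applying the standard criterion for such products to vanish. The key observation is that $\ov B_2$ is exactly a diagram of the type treated in Example~\ref{ex1}: its incidence matrices $\ov F_n$ are the blocks of $F_n$ obtained by deleting the first row and first column, hence they are all-ones matrices of size $a_n \times a_{n-1}$ (recall $|V_{n+1}| = a_n + 1$, so $|W_n| = |V_n| - 1 = a_{n-1}$). As always, the measure entering the computation is the restriction of the ambient $\mu$, whose cylinder values are $p_w^{(n)} = (|V_1| \cdots |V_n|)^{-1}$ for every $w$, and not a measure intrinsic to $\ov B_2$.

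First I would compute the tower heights $\ov h_w^{(n)}$ inside $\ov B_2$. Because $\ov F_n$ consists entirely of ones, the relation $\ov h^{(n+1)} = \ov F_n \ov h^{(n)}$ shows inductively that all entries of $\ov h^{(n)}$ are equal to a common value $\ov h^{(n)}$ and that $\ov h^{(n+1)} = |W_n|\, \ov h^{(n)} = a_{n-1}\, \ov h^{(n)}$. Starting from $\ov h^{(1)} = 1$, this gives $\ov h^{(n)} = a_0 a_1 \cdots a_{n-2}$, so that $\sum_{w \in W_n} \ov h_w^{(n)} = |W_n|\, \ov h^{(n)} = a_0 a_1 \cdots a_{n-1}$.

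Next, using $\mu(X_{\ov B_2}) = \lim_{n\to\infty}\sum_{w \in W_n}\ov h_w^{(n)} p_w^{(n)}$ (the analogue of the identity $\mu(Y^{(n)}_{\ov B}) = \sum_{w \in W_n} \ov h_w^{(n)} p_w^{(n)}$ established in the proof of Theorem~\ref{mu_vertex_sbd}), I would substitute the two computations above to obtain
\begin{equation*}
\mu(X_{\ov B_2}) = \lim_{n\to\infty} \frac{a_0 a_1 \cdots a_{n-1}}{(a_0+1)(a_1+1)\cdots(a_{n-1}+1)} = \prod_{i=0}^\infty \frac{a_i}{a_i+1}.
\end{equation*}
It is worth emphasizing that this is exactly the same product as the one obtained for $\ov B_1$ in Proposition~\ref{meas_of_sbd}, even though $\ov B_1$ (the odometer on the first vertex) and $\ov B_2$ (all vertices except the first) are rather different subdiagrams.

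Finally, the conclusion follows from the elementary fact that, for $a_i \geq 1$, one has $\prod_{i=0}^\infty \frac{a_i}{a_i+1} = \prod_{i=0}^\infty \left(1 - \frac{1}{a_i+1}\right) = 0$ if and only if $\sum_{i=0}^\infty \frac{1}{a_i+1} = \infty$, and the latter converges if and only if $\sum_{i=0}^\infty \frac{1}{a_i}$ does, since $\frac{1}{2a_i} \leq \frac{1}{a_i+1} \leq \frac{1}{a_i}$. I do not expect any serious obstacle here; the only points requiring care are the correct bookkeeping of the index shift $|W_n| = a_{n-1}$ when evaluating the heights, and the reminder that it is the ambient measure $\mu$ (rather than a normalized measure on $\ov B_2$) whose cylinder values enter the product.
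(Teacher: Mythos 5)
Your proposal is correct and follows essentially the same route as the paper: compute $\mu(X_{\ov B_2})=\lim_n\sum_{w\in W_n}\ov h_w^{(n)}p_w^{(n)}$, evaluate the heights to get the product $\prod_{i=0}^\infty a_i/(a_i+1)$, and apply the standard vanishing criterion for infinite products. The only difference is cosmetic: you derive $\ov h_w^{(n)}=|W_1|\cdots|W_{n-1}|$ from the all-ones structure of $\ov F_n$, whereas the paper simply asserts it.
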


\begin{proof}
We have
\begin{eqnarray*}
\mu(X_{\ov B_2}) &= &\lim_{n \rightarrow \infty} \sum_{w \neq 1} \ov h_w^{(n)}p_w^{(n)}\\
&= &\lim_{n \rightarrow \infty} \sum_{w \neq 1} \frac{|W_1| \cdots |W_{n-1}|}{|V_1| \cdots |V_n|}\\
&= & \lim_{n \rightarrow \infty} \prod_{i = 1}^n\left( \frac{|W_i|}{|V_i|}\right)\\
 &= & \lim_{n \rightarrow \infty} \prod_{i = 1}^n\left(1 - \frac{1}{|V_i|}\right)\\
   &= &\prod_{i = 0}^\infty \left( 1 - \frac{1}{1 + a_i}\right).
\end{eqnarray*}
Hence $\mu(X_{\ov B_2}) = 0$ if $\sum_{i = 0}^\infty \dfrac{1}{a_i} = \infty$, and $\mu(X_{\ov B_2}) \in (0,1)$ if $\sum_{i = 0}^\infty \dfrac{1}{a_i} < \infty$.
\end{proof}

The idea of the proof of next result is similar to Proposition \ref{meas_ext12}, so that we omit its proof.
\begin{prop}
If $\mu(X_{\ov B_2}) = 0$, then $\widehat{\overline{\mu}}(\wh X_{\ov B_2}) = \infty$.
\end{prop}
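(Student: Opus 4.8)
The plan is to apply the finiteness criterion of Theorem~\ref{thm from BKK} to the vertex subdiagram $\ov B_2$, whose support is $W_n = V_n \setminus \{1\}$, so that $W'_n = \{1\}$. For this subdiagram the finiteness series collapses to a single sum over $v \in W_{n+1}$ weighted by $f^{(n)}_{v,1}$. Since the first column of $F_n$ is $(a_n, 0, \ldots, 0, 1)^T$, the only $v \in W_{n+1}$ with $f^{(n)}_{v,1} \neq 0$ is the last vertex $v = |V_{n+1}|$, where $f^{(n)}_{|V_{n+1}|,1} = 1$. Hence
$$
\widehat{\overline{\mu}}(\wh X_{\ov B_2}) < \infty \ \Longleftrightarrow\ S := \sum_{n=1}^\infty h_1^{(n)}\, \ov p_{|V_{n+1}|}^{(n+1)} < \infty,
$$
and it suffices to show $S = \infty$ whenever $\sum_i a_i^{-1} = \infty$, which is exactly the hypothesis $\mu(X_{\ov B_2}) = 0$ by Proposition~\ref{meas_of_sbd2}.

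Next I would compute the two ingredients. The incidence matrices of $\ov B_2$ are the all-ones matrices of size $|W_{n+1}| \times |W_n|$ with $|W_n| = |V_n| - 1 = a_{n-1}$, so $\ov B_2$ is precisely a diagram of the type in Example~\ref{ex1}; consequently $\ov p_v^{(n+1)} = (|W_1| \cdots |W_{n+1}|)^{-1} = (a_0 a_1 \cdots a_n)^{-1}$. For the height $h_1^{(n)}$ of the first tower in $B$, I would set $A_n = h_1^{(n)}$ and $B_n = \sum_{w \geq 2} h_w^{(n)}$ and read off from $F_n h^{(n)} = h^{(n+1)}$ the coupled recursion $A_{n+1} = a_n A_n + B_n$, $B_{n+1} = A_n + a_n B_n$. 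Then $H_n := A_n + B_n$ satisfies $H_{n+1} = (a_n + 1)H_n$, giving $H_n = |V_1| \cdots |V_n|$, while $D_n := A_n - B_n$ satisfies $D_{n+1} = (a_n - 1)D_n$; since $a_0 = 1$ gives $A_1 = B_1 = 1$ and thus $D_1 = 0$, we obtain $D_n = 0$ and hence $h_1^{(n)} = \tfrac12 H_n = \tfrac12 |V_1| \cdots |V_n|$ for all $n \geq 1$.

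Finally I would substitute these into $S$ and use $|V_k| = a_{k-1} + 1$:
$$
S = \frac12 \sum_{n=1}^\infty \frac{|V_1| \cdots |V_n|}{a_0 a_1 \cdots a_n} = \frac12 \sum_{n=1}^\infty \frac{1}{a_n} \prod_{k=0}^{n-1}\frac{a_k + 1}{a_k} > \frac12 \sum_{n=1}^\infty \frac{1}{a_n},
$$
where the inequality holds because $\prod_{k=0}^{n-1}(1 + a_k^{-1}) > 1$. Since $\mu(X_{\ov B_2}) = 0$ forces $\sum_n a_n^{-1} = \infty$ by Proposition~\ref{meas_of_sbd2}, we conclude $S = \infty$, that is $\widehat{\overline{\mu}}(\wh X_{\ov B_2}) = \infty$.

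The main obstacle is the evaluation of $h_1^{(n)}$: unlike the case of $\ov B_1$, the relevant column of $F_n$ feeds into the \emph{last} vertex, so one must control how the first-vertex tower grows relative to the whole diagram. Solving the $2\times 2$ linear recursion in $(A_n, B_n)$ and observing the cancellation $D_n \equiv 0$ (a consequence of $a_0 = 1$) is what makes the final estimate clean; for general $a_0$ one would instead use the crude bound $h_1^{(n)} = \tfrac12(H_n + D_n) \geq \tfrac12 H_n$, valid since $D_n \geq 0$, which still yields $S > \tfrac12 \sum_n a_n^{-1} = \infty$. Alternatively, one could bypass the explicit computation entirely: $B$ is simple, so $\mu(X_{\ov B_2}) = 0$ makes $X_{\ov B_2}$ a thin set by Theorem~\ref{main}, and Corollary~\ref{corol_infty} then gives $\widehat{\overline{\mu}}(\wh X_{\ov B_2}) = \infty$ at once.
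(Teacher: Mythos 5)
Your main argument is correct and is essentially the route the paper intends: the paper omits this proof, saying only that it mirrors Proposition~\ref{meas_ext12}, which likewise applies the criterion of Theorem~\ref{thm from BKK} and bounds the resulting series from below by $\sum_n a_n^{-1}$; your reduction of the series to $\sum_n h_1^{(n)}\ov p_{|V_{n+1}|}^{(n+1)}$, the identification of $\ov B_2$ with the Example~\ref{ex1} diagram giving $\ov p_v^{(n+1)}=(a_0\cdots a_n)^{-1}$, and the exact evaluation $h_1^{(n)}=\tfrac12|V_1|\cdots|V_n|$ (valid because $a_0=1$ forces $D_1=0$) are all correct and in fact cleaner than the crude height bound used in the paper's proof of the companion Proposition~\ref{meas_ext12}, while your alternative via Theorem~\ref{main} and Corollary~\ref{corol_infty} is also legitimate since $B$ becomes simple after telescoping two levels and $\mu$ is the unique, hence ergodic, invariant measure when $\sum_n a_n^{-1}=\infty$. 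One small caveat on your aside about general $a_0$: with single edges at level $0$ one has $A_1=1\le a_0=B_1$, so $D_n\le 0$ rather than $D_n\ge 0$ and the stated inequality $h_1^{(n)}\ge\tfrac12 H_n$ is reversed; the conclusion would still follow there because $|D_n|/H_n=(|D_1|/H_1)\prod_{k}\frac{a_k-1}{a_k+1}\to 0$ when $\sum_k a_k^{-1}=\infty$, so $h_1^{(n)}\sim H_n/2$ in any case, but this does not affect your proof of the proposition as stated.
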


To finish our study of the diagram $B$, we show how one can  find all ergodic measures on $B$.
We recall that any finite ergodic invariant measure $\mu$ on $X_B$ is determined by the sequence $(p^{(n)})$ of  values of measure $\mu$ on cylinder sets at each level $n$. That is,
$$
p_w^{(n)} = \sum_{v \in V_{n+1}}f_{v,w}^{(n)}p_v^{(n+1)}.
$$
It is easy to see that in our example the vector  $p^{(n)}$ has the form $(p_1^{(n)}, p_0^{(n)}, ... ,p_0^{(n)})$. Hence, for $n \geq 1$, we obtain the system of equations
$$
\left\{
\begin{aligned}
p_1^{(n)} = a_n p_1^{(n+1)} + p_0^{(n+1)},\\
p_0^{(n)} = p_1^{(n+1)} + a_n p_0^{(n+1)}.\\
\end{aligned}
\right.
$$
For $n = 1$, we have $a_0 p_1^{(1)} + a_0 p_0^{(1)} = 1$, hence $p_1^{(1)} + p_0^{(1)} = \dfrac{1}{a_0}$. Taking the sum of these equations, we obtain $p_1^{(n)} + p_0^{(n)} = (a_n + 1)(p_1^{(n+1)} + p_0^{(n+1)})$. Therefore,
 $$
 p_1^{(n)} + p_0^{(n)} = \frac{1}{a_0(1 + a_1)\cdots (1+a_{n-1})}.
 $$
  Set $c_n = p_1^{(n)} + p_0^{(n)}$. Define a linear transformation $T_n$ acting on $\mathbb{R}^2$ as follows: $T_n \colon (x,y) \mapsto (a_n x + y, x + a_n y)$. We get the vectors $T_n(c_{n+1},0) = P^{(1)}_n$ and $T_n(0, c_{n+1}) = Q^{(1)}_n$. Similarly, we  denote  $P^{(m)}_n = T_n \circ \ldots \circ T_{n+m-1}(c_{n+m},0)$ and $Q^{(m)}_n = T_n \circ \ldots \circ T_{n+m-1}(0, c_{n+m})$. Let $I_n^{(m)}$ be the interval on the plane with endpoints $P^{(m)}_n, Q^{(m)}_n$. Then it is obvious that  $I_n^{(1)} \supset I_n^{(2)} \supset \ldots \supset I_n^{(m)} \supset \ldots $; and the ergodic measures on $B$ correspond to the endpoints of the interval $I_n^{(\infty)} = \lim_{m \rightarrow \infty} I_n^{(m)}$. In such a way we see that there exist at most two ergodic measures on $B$. We prove the following result.

\begin{thm}
If the extensions of the ergodic measures supported by subdiagrams $\ov B_i \ (i=1,2)$ are both finite, then there are exactly two finite ergodic measures on $B$, and they coincide with these extensions. Otherwise, there is a unique finite ergodic measure defined by the ECS property.
\end{thm}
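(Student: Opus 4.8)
The plan is to count the ergodic measures through the nested intervals $I_n^{(m)}$ constructed just above the statement, whose endpoints $P_n^{(\infty)}, Q_n^{(\infty)}$ are the ergodic measures, and then to identify these two endpoints with the extensions $\wh{\ov\mu}_1, \wh{\ov\mu}_2$ of the measures carried by $\ov B_1$ and $\ov B_2$. The decisive computation is the length of $I_n^{(m)}$. The transformation $T_n(x,y)=(a_nx+y,\,x+a_ny)$ is given by the symmetric matrix $\begin{pmatrix} a_n & 1 \\ 1 & a_n\end{pmatrix}$, with eigenvectors $(1,1)$ (eigenvalue $a_n+1$) and $(1,-1)$ (eigenvalue $a_n-1$). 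Since the endpoints at level $n+m$ are $(c_{n+m},0)$ and $(0,c_{n+m})$, their difference $c_{n+m}(1,-1)$ lies in the eigendirection preserved by every $T_j$, so that
$$
|I_n^{(m)}| = \sqrt2\,c_{n+m}\prod_{k=0}^{m-1}(a_{n+k}-1) = \frac{\sqrt2}{a_0(1+a_1)\cdots(1+a_{n-1})}\prod_{j=n}^{n+m-1}\frac{a_j-1}{a_j+1}.
$$
Letting $m\to\infty$, the set $I_n^{(\infty)}$ is a genuine interval iff $\prod_{j\ge n}\frac{a_j-1}{a_j+1}>0$, i.e. iff $\sum_j \tfrac1{a_j}<\infty$, and degenerates to a point otherwise. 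Thus there are exactly two ergodic measures when $\sum 1/a_n<\infty$ and a unique one otherwise.

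I would then record that the unique measure in the degenerate case is the ECS measure. All points of $I_n^{(m)}$ lie on the line $x+y=c_n$, and the flip $(x,y)\mapsto(y,x)$ commutes with each $T_j$ and interchanges $P_n^{(m)}$ with $Q_n^{(m)}$; hence the midpoint lies on $x=y$, so it is $(c_n/2,c_n/2)$, which is precisely the ECS point $p_1^{(n)}=p_0^{(n)}=(|V_1|\cdots|V_n|)^{-1}$. Consequently, when $I_n^{(\infty)}$ collapses to a point, the unique ergodic measure is the ECS measure $\mu$.

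Next I would identify the endpoints with the subdiagram extensions when $\sum 1/a_n<\infty$. Here $\mu(X_{\ov B_1})>0$ and $\mu(X_{\ov B_2})>0$ by Propositions~\ref{meas_of_sbd} and \ref{meas_of_sbd2}, the sets $X_{\ov B_1},X_{\ov B_2}$ are disjoint, and each is disjoint from the $\mathcal E$-saturation $\wh X_{\ov B_j}$ of the other (a path in $\wh X_{\ov B_1}$ eventually sits at the first vertex, while a path in $X_{\ov B_2}$ never visits it). Writing $\mu$ as a convex combination of the two ergodic measures $\mu_1',\mu_2'$ shows that at least one of them charges $X_{\ov B_1}$. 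If an ergodic measure $\nu$ has $\nu(X_{\ov B_1})>0$, then by ergodicity $\nu$ is carried by the invariant set $\wh X_{\ov B_1}$, and since the odometer $\ov B_1$ is uniquely ergodic, $\nu|_{X_{\ov B_1}}$ is a multiple of $\ov\mu_1$; hence $\nu$ is a normalization of $\wh{\ov\mu}_1$ and $\wh{\ov\mu}_1(\wh X_{\ov B_1})<\infty$. The same reasoning applies to $\ov B_2$, which is the all-ones diagram shown to be uniquely ergodic in Example~\ref{ex1}. Because $\mu_1',\mu_2'$ are distinct and mutually singular, exactly one charges $X_{\ov B_1}$ and the other $X_{\ov B_2}$; therefore both extensions are finite and the two ergodic measures coincide with $\wh{\ov\mu}_1$ and $\wh{\ov\mu}_2$.

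Finally I would assemble the statement. By Proposition~\ref{meas_ext12} and its analogue, $\sum 1/a_n=\infty$ forces both extensions to be infinite, so if both extensions are finite then necessarily $\sum 1/a_n<\infty$, and the previous paragraph gives exactly two ergodic measures equal to the extensions. Conversely, if the extensions are not both finite, then (since $\sum 1/a_n<\infty$ would make both finite) we must have $\sum 1/a_n=\infty$, whence the interval analysis yields a unique ergodic measure, namely the ECS measure $\mu$. I expect the main obstacle to be the identification step: justifying rigorously that an ergodic measure charging $X_{\ov B_i}$ is a finite normalization of $\wh{\ov\mu}_i$, which combines the ergodic full-measure argument on the saturation with the unique ergodicity of the odometer and of the all-ones diagram, together with the bookkeeping that the two ergodic measures distribute themselves one to each subdiagram.
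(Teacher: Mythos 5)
Your proposal is correct, and its skeleton is the same as the paper's: both count ergodic measures by tracking the nested intervals $I_n^{(m)}$, both reduce the dichotomy to the convergence of $\sum_n 1/a_n$ via the contraction ratio $(a_n-1)/(a_n+1)$ (your eigenvector computation with $(1,-1)$ is the paper's direct length calculation in different clothing), and both identify the endpoints using unique ergodicity of $\ov B_1$ and $\ov B_2$. Where you genuinely diverge is the identification step. The paper writes out the coordinates of the endpoint $P^{(n)}$ of $I_n^{(\infty)}$ explicitly and verifies $\mu_P(X_{\ov B_1})=\lim_n p_1^{(n)}\ov h_1^{(n)}>0$ by a concrete limit computation before invoking unique ergodicity of the odometer; you avoid the endpoint coordinates entirely by decomposing the ECS measure $\mu$ as a convex combination of the two ergodic measures, using $\mu(X_{\ov B_i})>0$ (Propositions \ref{meas_of_sbd} and \ref{meas_of_sbd2}) together with the disjointness of $X_{\ov B_i}$ from the saturation $\wh X_{\ov B_j}$, $j\neq i$, to force exactly one ergodic measure onto each subdiagram. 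Your route is softer and buys two things the paper leaves implicit: it delivers, as a byproduct of the identification $\nu = \nu(X_{\ov B_i})\,\wh{\ov\mu}_i$, the fact that both extensions are finite precisely when $\sum 1/a_n<\infty$ — the bookkeeping needed to match the hypothesis ``both extensions finite'' with the interval dichotomy, for which the paper only supplies the converse direction through Proposition \ref{meas_ext12} and its analogue — and your symmetry observation that the flip $(x,y)\mapsto(y,x)$ commutes with every $T_n$ pins the collapsed interval to the ECS point $(c_n/2,c_n/2)$, whereas the paper simply relies on the ECS measure always existing. The only caveat, shared with the paper, is that the equivalence $\prod_{j\ge n}\frac{a_j-1}{a_j+1}>0 \Leftrightarrow \sum_j 1/a_j<\infty$ tacitly assumes $a_j\geq 2$ for all large $j$; a single $a_j=1$ kills one factor without affecting the number of ergodic measures, so strictly one should work with $n$ beyond the last such level.
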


\begin{proof}
Let $I_n$ be the interval in $\mathbb{R}^2$ with endpoints $(0,c_n)$ and $(c_n,0)$. Denote by $|I_n|$ the length of $I_n$. We will need the following lemma

\begin{lemm}
If $\sum_{n=1}^{\infty} \dfrac{1}{a_n} = \infty$, then there is a unique ergodic invariant measure on $B$. If $\sum_{n=1}^{\infty} \dfrac{1}{a_n} < \infty$, then there are exactly two different ergodic invariant measures on $B$.
\end{lemm}

{\em Proof of the lemma}. We have
$$
\frac{|I_n^{(1)}|}{|I_n|} = \frac{\sqrt{2}(a_n - 1)c_{n+1}}{\sqrt{2} c_n} = \frac{a_n - 1}{a_n + 1} = 1 - \frac{2}{1 + a_n}.
$$
Hence
$$
\frac{|I_n^{(m)}|}{|I_n|} = \frac{|I_n^{(m)}|}{|I_n^{(m-1)}|}\ldots \frac{|I_n^{(0)}|}{|I_n|} = \prod_{k=n}^m\left(1 - \frac{2}{1+a_k}\right).
$$
Thus,
$$
\frac{|I_n^{(\infty)}|}{|I_n|} =  \prod_{k=n}^\infty\left(1 - \frac{2}{1+a_k}\right).
$$
Therefore, $\dfrac{|I_n^{(\infty)}|}{|I_n|} \rightarrow 0$ if and only if $\sum_{n=1}^{\infty} \dfrac{1}{a_n} = \infty$. In this case $I_n^{(\infty)}$ is just a point, hence there is a unique ergodic invariant measure. Otherwise, there are two ergodic measures. The lemma is proved.

\medskip
We continue the {\em proof of the theorem}. We show that in the case of two ergodic measures, they coincide with the extensions of ergodic measures from subdiagrams $\ov B_i$. Set $P_n = (a_n c_{n+1}, c_{n+1})$ and $Q_n = (c_{n+1}, a_n c_{n+1})$. Let $P^{(n)}, Q^{(n)}$ be the endpoints of $I_n^{(\infty)}$.
Then we have $|P^{(n)}Q^{(n)}| = \prod_{k \geq n}\left(1 - \dfrac{2}{1 + a_k}\right)\sqrt{2}c_n$. The coordinates of $P^{(n)}$ are

\begin{eqnarray*}
P^{(n)} &= &\left(\frac{c_n}{2} + \frac{|P^{(n)}Q^{(n)}|}{2\sqrt{2}}, \frac{c_n}{2} - \frac{|P^{(n)}Q^{(n)}|}{2\sqrt{2}}\right)\\
&= & \left(\frac{c_n}{2} \left(1 + \prod_{k=n}^\infty\left(1 - \frac{2}{1+a_k} \right)\right), \frac{c_n}{2} \left(1 + \prod_{k=n}^\infty\left(1 - \frac{2}{1+a_k} \right)\right)\right).
\end{eqnarray*}
Let $\mu_P$ be the ergodic measure corresponding to $P$.
Recall that $\mu_P$ is defined by its values $(p_1^{(n)},p_0^{(n)},\ldots, p_0^{(n)})$, where $p_1^{(n)}$ is the $x$-th coordinate of $P^{(n)}$ and $p_0^{(n)}$ is the $y$-th coordinate. Then
\begin{eqnarray*}
\mu_P(X_{\ov B_1}) &= &\lim_{n \rightarrow \infty} p_1^{(n)}\ov h_1^{(n)} \\
&= & \lim_{n \rightarrow \infty} \frac{c_n}{2} \left(1 + \prod_{k=n}^\infty\left(1 - \frac{2}{1+a_k} \right)\right) a_0 \cdots a_n\\
&= & \frac{1}{2} \lim_{n \rightarrow \infty} \frac{a_0 \cdots a_{n-1}}{a_0(1+a_1)\cdots (1+a_{n-1})} \left(1 + \prod_{k=n}^\infty\left(1 - \frac{2}{1+a_k} \right)\right)\\
&> & 0.
\end{eqnarray*}
Recall that $\ov B_1$ is the odometer with edges $(a_n)$. Hence, we have two non-zero finite invariant measures on the odometer $\ov B_1$, namely, $\ov \mu_{B_1}$ and $\mu_P$. Since odometer is a uniquely ergodic system, the measures are equivalent. Thus, $\mu_P = C_1 \wh \mu_{\ov B_1}$, where $C_1$ is a constant multiple. In the same way, one can prove that the other ergodic measure $\mu_Q = C_2 \wh \mu_{\ov B_2}$.
\end{proof}

\textbf{Acknowledgment.} The authors are grateful to the Nicolas Copernicus University, the Max Planck Institute for Mathematics, and the University of Iowa for the hospitality and support. We would like also to thank Tomasz Downarowicz, Palle Jorgensen, Constantyne Medynets,  and Mykola Matviichuk for useful discussions.


\begin{thebibliography}{99}


\bibitem[BK00]{BK00}
S. Bezuglyi and J. Kwiatkowski, {\em Topological full group of a Cantor minimal system is dense in the full group}, Topological Methods in Nonlinear Analysis 16 (2000), 371–397.


\bibitem[BKMS10]{BKMS_2010}
S. Bezuglyi, J. Kwiatkowski, K. Medynets, B. Solomyak,\textit{ Invariant measures on stationary Bratteli diagrams},  Ergodic Theory Dynam. Syst., 30 (2013),  973 - 1007.


\bibitem[BK11]{S.B.O.K.}
S. Bezuglyi and O. Karpel, \textit{Homeomorphic measures on stationary Bratteli diagrams}, J. Funct. Anal. 261 (2011), 3519-3548.

\bibitem[BKMS13]{BKMS_2013}
S. Bezuglyi, J. Kwiatkowski, K. Medynets, B. Solomyak,\textit{ Finite rank Bratteli diagrams: structure of invariant measures},  Trans. Amer. Math. Soc. 365 (2013),  2637 -- 2679.

\bibitem[BKY14]{BKY14}
S. Bezuglyi, J. Kwiatkowski, R. Yassawi, \textit{ Perfect orderings on finite rank Bratteli diagrams.} ``Canad. J. Math.'', 66 (2014),  57 – 101.

\bibitem[BY]{BY} S. Bezuglyi,  R. Yassawi, {\em Orders that yield homeomorphisms on Bratteli diagrams}, preprint, 2013.

\bibitem[BKK14]{BKK_14}
S. Bezuglyi, O. Karpel, J. Kwiatkowski \textit{Subdiagrams of Bratteli diagrams
supporting finite invariant measures}, J. Math. Physics, Analysis, Geometry 11 (2015), 3 -- 17.


\bibitem[Br72]{Br72} O. Bratteli, \textit {Inductive limits of finite-dimensional $C^{*}$-algebras}, Trans. Amer. Math. Soc. 171 (1972), 195--234.


\bibitem[Du10]{durand:2010} F. Durand. {\em Combinatorics on Bratteli diagrams and dynamical systems.} Combinatorics, Automata and Number Theory. V. Berth\'e, M. Rigo (Eds).
Encyclopedia of Mathematics and its Applications 135, Cambridge
University Press (2010), 338--386.


\bibitem[G98]{G}
F.\,R.\, Gantmacher, The Theory of Matrices, Matrix Theory, AMS Chelsea Publishing (1998), 660 p.


\bibitem[GPS95]{giordano_putnam_skau:1995}
T.~Giordano, I.~Putnam, and C.~Skau.
{\em Topological orbit equivalence and {$C^*$-crossed} products,}
 J. Reine Angew. Math., 469 (1995), 51 -- 111.

\bibitem[GPS04]{giordano_putnam_skau:2004} T.~Giordano, I.~Putnam, and C.~Skau.
{\em Affable equivalence relations and orbit structure of Cantor dynamical systems}, Ergodic Theory and Dynam. Systems 24 (2004), 441 - 475.


\bibitem[G72]{G72}
C. Grillenberger, {\em Constructions of strictly ergodic systems. I. Given entropy}, Z. Wahrscheinlichkeitstheorie und Verw. Gebiete 25 (1972/73), 323–334.


\bibitem[H13]{H2013} D.~Handelman, {\em Equal column sum and equal row sum dimension group realizations}, preprint (2013), 	arXiv:1301.2799


\bibitem[HPS92]{herman_putnam_skau:1992}
R.~H. Herman, I.~Putnam, and C.~Skau.
{\em Ordered {Bratteli} diagrams, dimension groups, and topological
  dynamics,} Int. J. Math., 3(6) (1992), 827 -- 864.

\bibitem[M06]{medynets:2006}
K.~Medynets.
\newblock {\em Cantor aperiodic systems and Bratteli diagrams.}
\newblock C. R., Math., Acad. Sci. Paris, 342(1)  (2006), 43--46.

\bibitem[Ph01]{phelps:2001}
Robert R. Phelps,
{\em Lectures on Choquet's Theorem}, Springer-Verlag Berlin Heidelberg (2001).

\bibitem[P71]{pullman:1971}
N. J. Pullman
{\em A geometric approach to the theory of nonnegative matrices}, Linear algebra and its applications, 4 (1971), 827 -- 864.

\bibitem[V81]{vershik:1981} A. Vershik, Uniform algebraic approximation of shift and multiplication operators, {\em Dokl. Acad. Nauk SSSR}, 259 (1981), 526 - 529. (Russian)

\end{thebibliography}
\end{document}